\theoremstyle{plain}
\newtheorem{theorem}{Theorem}[section]
\newtheorem{lemma}[theorem]{Lemma}
\theoremstyle{definition}
\newtheorem{definition}[theorem]{Definition}
\newtheorem{assumption}[theorem]{Assumption}
\theoremstyle{remark}
\newtheorem{remark}[theorem]{Remark}
\DeclarePairedDelimiter\ceil{\lceil}{\rceil}
\DeclarePairedDelimiter\floor{\lfloor}{\rfloor}
\newcommand\norm[1]{\left\lVert#1\right\rVert}
\def\BR{{\mathbb{R}}}
\def\fO{{\mathcal{O}}}
\def\eqref#1{equation~(\ref{#1})}
\def\ceil#1{\left\lceil #1 \right\rceil}
\def\floor#1{\left\lfloor #1 \right\rfloor}
\def\1{\bf{1}}
\def\fO{{\mathcal{O}}}
\def\fR{{\mathcal{R}}}
\def\BR{{\mathbb{R}}}
\DeclareMathOperator*{\argmax}{arg\,max}
\DeclareMathOperator*{\argmin}{arg\,min}
\theoremstyle{plain}
\def\Ddots{\mathinner{\mkern1mu\raise\p@
\vbox{\kern7\p@\hbox{.}}\mkern2mu
\raise4\p@\hbox{.}\mkern2mu\raise7\p@\hbox{.}\mkern1mu}}
\newcommand*{\rom}[1]{\expandafter\@slowromancap\romannumeral #1@}
\title{Incremental Quasi-Newton Methods with Faster Superlinear Convergence Rates}
\author{
    Zhuanghua Liu\textsuperscript{\rm 1, \rm2},
    Luo Luo\thanks{The corresponding author}\textsuperscript{\rm 3},
    Bryan Kian Hsiang Low\textsuperscript{\rm 1}
}
\begin{document}

\maketitle

\begin{abstract}
We consider the finite-sum optimization problem, where each component function is strongly convex and has Lipschitz continuous gradient and Hessian.
The recently proposed incremental quasi-Newton method is based on BFGS update and achieves a local superlinear convergence rate that is dependent on the condition number of the problem. 
This paper proposes a more efficient quasi-Newton method by incorporating the symmetric rank-1 update into the incremental framework, which results in the condition-number-free local superlinear convergence rate. 
Furthermore, we can boost our method by applying the block update on the Hessian approximation, which leads to an even faster local convergence rate. 
The numerical experiments show the proposed methods significantly outperform the baseline methods.
\end{abstract}

\section{Introduction}
We study the following finite-sum minimization problem:
\begin{equation}
    \min_{x\in\BR^d} f(x) \coloneqq \frac{1}{n} \sum_{i=1}^{n} f_i (x),
    \label{finite_sum_obj}
\end{equation}
where each individual function $f_i: \BR^d \to \BR$ is strongly convex and has Lipschitz continuous gradient and Hessian. 
This formulation is ubiquitous in various machine learning models, including maximum likelihood estimation (MLE) \cite{bishop2006pattern, bottou2018optimization} and unsupervised learning problems \cite{murphy2012machine, hastie2009elements}. 
A notable example of the problem~(\ref{finite_sum_obj}) is the empirical risk minimization in supervised learning, where $n$ is the number of data examples and $f_i(\cdot)$ corresponds to the loss function incurred by each training instance.

In this paper, we are interested in solving the large-scale finite-sum problem, that is, the number of components $n$ in formulation (\ref{finite_sum_obj}) is large.
In this scenario, accessing the exact gradient or Hessian over the entire dataset is too expensive for each iteration. 
To circumvent this issue, stochastic or incremental optimization methods were introduced since they only require computing an estimation of the gradient or Hessian by a single sample (or a small mini-batch of samples) at each round.
The most popular of these methods is stochastic gradient descent (SGD). 
It has been widely used in large-scale optimization problems thanks to its cheap computational cost per iteration~\cite{bottou2018optimization}.
Applying the variance reduction~\cite{schmidt2017minimizing,johnson2013accelerating, defazio2014saga,zhang2013linear} and acceleration techniques~\cite{nesterov2003introductory,allen2017katyusha} can improve the vanilla SGD, and it achieves a linear convergence rate with optimal incremental first-order oracle complexity~\cite{woodworth2016tight}. 

The second-order methods~\cite{nesterov2003introductory} incorporate the additional curvature information in every iteration, and it is possible to establish the local superliner convergence rate with these methods.
For the finite-sum problem (\ref{finite_sum_obj}), \citet{rodomanov2016superlinearly} proposed the Newton incremental method (NIM), which requires accessing the exact gradient and exact Hessian of one individual function in each iteration and attains the local superlinear convergence rate. 
The classical quasi-Newton methods~\cite{broyden1973local, dennis1974characterization,powell1971convergence} estimate the second-order information 
with first-order oracle calls and still hold the superlinear convergence rate.
However, most of the stochastic variants for quasi-Newton methods \cite{lucchi2015variance, moritz2016linearly} that employ gradient estimators only achieve linear convergence rates. 

\citet{mokhtari2018iqn} proposed the Incremental quasi-Newton (IQN) method by using classical BFGS update~\cite{broyden1970convergence,fletcher1970new,goldfarb1970family,shanno1970conditioning}, which is the first superlinear convergent quasi-Newton method without exact second-order oracle call in each iteration.
However, the best-known analysis of IQN~\cite{mokhtari2018iqn} only provided the asymptotic convergence result. 
Several follow-up works \cite{gao2020incremental, lahoti2023sharpened} attempted to characterize the convergence rate by fusing the greedy quasi-Newton update~\cite{rodomanov2021greedy} into the framework of IQN.
Specifically, \citet{lahoti2023sharpened} proposed sharpened lazy incremental quasi-Newton (SLIQN) by utilizing lazy propagation strategy and showed it achieves the superlinear convergence rate of $\fO\big((1-d^{-1} \varkappa^{-1})^{\ceil{t/n}^2}\big)$, where~$\varkappa$ is the condition number and $t$ is the number of iterations. 
\citet{gao2020incremental} proposed the Incremental Greedy BFGS (IGS) method with the same convergence rate as the SLIQN, but it requires more expensive per-iteration complexity.

In this work, we propose an efficient quasi-Newton method named the Lazy Incremental Symmetric Rank-1 (LISR-1) method for the finite-sum minimization problem. 
Our approach takes advantage of the well-known symmetric rank-1 (SR1) update to construct the Hessian estimator with sharper error bound than BFGS methods, and it also exploits the lazy propagation strategy to maintain a low per-iteration complexity.  
We show that LISR-1 achieves a local superlinear convergence rate of $\fO\big((1-d^{-1})^{\ceil{t/n}^2}\big)$, 
shaving off the dependency on the condition number $\varkappa$ compared with the convergence rate achieved by SLIQN and IGS. 
Each iteration of LISR-1 requires only $O(1)$ incremental gradient/Hessian-vector oracle calls and $\fO(d^2)$ flops in matrix operations, matching the existing IQN methods. 
Furthermore, we extend LISR-1 by making use of the symmetric rank-$k$ update~\cite{liu2023symmetric}  to construct the more accurate Hessian estimator where $k<d$ is the rank of the update, resulting in the block IQN method called Lazy Incremental Symmetric Rank-$k$ (LISR-$k$).
It enjoys the local convergence rate up to~$\fO\big((1- k d^{-1})^{\ceil{t/n}^2}\big)$ with additional computational cost of $\fO(k d^2)$ flops per-iteration. 
The numerical experiments on quadratic programming problems and the model of regularized logistic regression demonstrate significant improvements over baseline methods and confirm our theoretical findings.

\paragraph{Paper Organization} 
In Section \ref{sec:related_works}, we provide a literature review for quasi-Newton methods and their variants for finite-sum optimization problems.
In Section \ref{sec:notations}, we formalize the notations and assumptions of our problem and introduce the background of the Broyden family update. 
In Section \ref{sec:methodology}, we propose our LISR-1 method and provide its convergence analysis. 
In Section \ref{sec:block_update}, we present the LISR-$k$ method by incorporating the block-type update.
In Section \ref{sec:experiment}, we demonstrate the numerical experiments to show the improved efficiency of the proposed methods. 
Finally, we conclude this work in Section~\ref{sec:conclusion}.
All the proofs and more experimental results are deferred to the appendix.

\begin{table*}[t]
\centering
\begin{tabular}{cccc}
    \toprule
    Algorithm & Computation Cost & Convergence Rate \\
    \midrule
    IQN \cite{mokhtari2018iqn}  & $\fO(d^2)$ & asymptotic superlinear  \\\addlinespace
    IGS \cite{gao2020incremental} &  $\fO(d^3)$ & $\fO\big((1-d^{-1} \varkappa^{-1})^{\ceil{t/n}^2}\big)$  \\\addlinespace
    SLIQN \cite{lahoti2023sharpened}  & $\fO(d^2)$ & $\fO\big((1-d^{-1} \varkappa^{-1})^{\ceil{t/n}^2}\big)$  \\\addlinespace 
    \hline \addlinespace
     LISR-1 (this work) & $\fO(d^2)$ & $\fO\big((1-d^{-1})^{\ceil{t/n}^2}\big)$  \\\addlinespace
     LISR-$k$ (this work) & $\fO(k d^2)$ & $\fO\big((1- k d^{-1} )^{\ceil{t/n}^2}\big)$  \\\addlinespace
    \bottomrule
\end{tabular}
\caption{We compare the per-iteration computation cost and the convergence rates of incremental fashion quasi-Newton methods. Note that the explicit convergence rate of the vanilla IQN method still remains a mystery.}
\label{tab:theoretic_res}
\end{table*}

\section{Related Work}\label{sec:related_works} 

In this section, we review related work of quasi-Newton methods and their variants for large-scale optimization problems.

\paragraph{Classical Quasi-Newton Methods} 
Past decades have witnessed extensive research progress on quasi-Newton methods. 
The main advantage of quasi-Newton methods is their capability to reach a superlinear convergence without computing the exact Hessian or its inverse. 
To estimate the second-order information, the classical quasi-Newton methods are based on the secant equation and the corresponding closeness criteria between successive Hessian estimations. 
The choice of closeness criteria leads to different types of quasi-Newton methods, including Broyden's method \cite{broyden1965class,broyden1973local,gay1979some}, the Davidon-Fletcher-Powell (DFP) method \cite{davidon1991variable,fletcher1963rapidly}, the Broyden-Fletcher-Goldfarb-Shanno (BFGS) method \cite{broyden1970convergence, fletcher1970new,goldfarb1970family,shanno1970conditioning} and the symmetric rank-1 (SR1) method \cite{conn1991convergence}. 
The asymptotic superlinear convergence of quasi-Newton methods was established in the 1970s~ \cite{powell1971convergence,dixon1972quasi,dixon1972quasi_2, broyden1973local,dennis1974characterization}, 
while the explicit superlinear rates of quasi-Newton methods were obtained only recently. 
\citet{rodomanov2021greedy} first proposed greedy quasi-Newton methods and gave its non-asymptotic superlinear convergence guarantees. 
Later, \citet{lin2022explicit} provided a sharper analysis for these methods. 
After that, \citet{rodomanov2021new,rodomanov2021rates,jin2023non,ye2021explicit} established the explicit rates for the classical (secant equation-based) quasi-Newton methods.

\paragraph{Block Quasi-Newton Methods} 
\citet{schnabel1983quasi} proposed block quasi-Newton methods. These methods construct the Hessian estimator along multiple directions during each iteration, and they achieve better empirical performance than classical quasi-Newton methods like BFGS~\cite{o1994linear}. 
After several decades, the superlinear convergence of these methods was established by \citet{gao2018block,gower2016stochastic,gower2017randomized}. 
Very recently, \citet{liu2023symmetric} presented explicit superlinear convergence rates of block quasi-Newton methods, which explains why the use of multiple directions benefits the convergence behaviors.

\paragraph{Stochastic/Incremental Quasi-Newton Methods} Due to the sheer volume of data in modern machine learning applications, researchers have been investigating the extension of quasi-Newton methods on large-scale optimization problems.
Several early works established the stochastic quasi-Newton methods to reduce the computational cost at each iteration~\cite{byrd2016stochastic,mokhtari2014res,mokhtari2015global,moritz2016linearly,lucchi2015variance,chang2019accelerated}, but these methods cannot obtain the superlinear convergences like classical quasi-Newton methods. 
Incremental quasi-Newton methods (IQN)~\cite{mokhtari2018iqn,gao2020incremental,lahoti2023sharpened} use the aggregated information to construct a more accurate gradient and Hessian estimator, which leads to superlinear convergence. 
We compare the proposed methods with related work in Table~\ref{tab:theoretic_res}.

\section{Preliminaries}\label{sec:notations}
In this section, we formalize the notations and assumptions throughout this paper, then we introduce the well-known Broyden family updates which are widely used in quasi-Newton methods.

\subsection{Notations} 
We denote $e_i \in \BR^d$ as the $i$-th standard basis vector of~$d$-dimensional Euclidean space, where $i\in[d]$. 
We define the index $i_t$ as $t~{\rm mod}~n$. 
For vectors $u, v \in \BR^d$, we denote their inner product by $\langle u, v \rangle \coloneqq u^{\top} v$.
We use $\norm{\cdot}$ to represent the Euclidean norm of the vector and the spectral norm of the matrix. 
Given a positive semi-definite matrix $A\in\BR^{d\times d}$ and a vector $u\in\BR^d$, we define the norm of $u$ with respect to $A$ as~$\norm{u}_{A} \coloneqq \sqrt{\langle u, A u\rangle}$.
We let
\begin{equation}
    E_k(A) = [e_{i_1} ; \ldots; e_{i_k}] \in \BR^{d\times k},
    \label{largest_k_def}
\end{equation}
where $i_1, \ldots, i_k$ are the indices for the largest $k$ entries in the diagonal of $A$.
We also use ${\rm tr}(\cdot)$ to present the trace of a square matrix.
Additionally, we denote the solution of problem (\ref{finite_sum_obj}) as $x^*\coloneqq\argmin_{x\in\BR^d}f(x)$.

\subsection{Assumptions}

In the remainder of this paper, we always suppose Problem~(\ref{finite_sum_obj}) satisfies the following assumptions.

\begin{assumption}\label{convex_assumption}
We suppose each function $f_i(\cdot)$ is twice-differentiable, $L$-smooth and $\mu$-strongly convex, i.e., there exist constants $L > 0$ and $\mu > 0$ such that 
\begin{equation}\label{eq:strong_convex}
    \mu I \preceq \nabla^2 f_i (x) \preceq L I  
\end{equation}
for any $x\in\BR^d$.
\end{assumption}

\begin{assumption}\label{smooth_assumption}
    We suppose each $f_i(\cdot)$ has a $\Tilde{L}$-Lipschitz continuous Hessian, i.e., there exists a constant $\Tilde{L}$ such that 
    \begin{equation*}
        \norm{\nabla^2 f_i(x) - \nabla^2 f_i(y)} \leq \Tilde{L} \norm{x - y}.
    \end{equation*}
    for any $x, y \in \BR^d$.
\end{assumption}

The strong convexity and the Lipschitz continuity of Hessian in our assumptions imply that each $f_i(\cdot)$ is strongly self-concordant with constant $M \coloneqq\Tilde{L} \mu^{-3/2}$~\cite{rodomanov2021greedy}, i.e, we have
\begin{align*}
\nabla^2 f_i(y) - \nabla^2 f_i(x) \preceq M \norm{y - x}_{\nabla^2 f_i(z)} \nabla^2 f_i(w)    
\end{align*}
for any $x,y,z,w \in \BR^d$.

Additionally, we let $\varkappa\coloneqq L/\mu$ be the condition number of our problem which could be very large in practice.

\subsection{Broyden Family Update}

Many popular quasi-Newton methods such as DFP, BFGS, and SR1 belong to the Broyden family update~\cite[Section 6.3]{nocedal1999numerical}, which is defined as follows.


\begin{definition}
    Let $G\in\BR^{d \times d}$ and $A\in\BR^{d \times d}$ be two positive define matrices satisfying $G \succeq A$. For any non-zero $u \in \BR^d$ and $\tau\in[0,1]$, if $G u = A u$, we define 
    ${\rm Broyd}_{\tau}(G, A, u) \coloneqq G$. Otherwise, we define
    \begin{equation}
    \small
    \begin{split}
        \!& \!{\rm Broyd}_{\tau}(G, A, u) \\ 
        \!\!\coloneqq & \tau \left[ G - \frac{A u u^{\top}G + G u u^{\top} A}{u^{\top} A u} + \left( \frac{u^{\top} G u}{u^{\top} A u} + 1\right) \frac{A u u^{\top} A}{u^{\top} A u} \right] \\
        & \quad + (1 - \tau) \left[ G - \frac{(G - A) u u^{\top} (G - A)}{u^{\top} (G - A) u}\right].
    \end{split}
    \label{broyden_eq}
    \end{equation}
\end{definition}

We can recover several well-known quasi-Newton methods by taking the different values of $\tau$:
\begin{itemize}
\item  For $\tau = 1$, Eq. (\ref{broyden_eq}) corresponds to the DFP update
\begin{equation*}
\begin{split}
    {\rm DFP}(G, A, u) \coloneqq & G - \frac{A u u^{\top} G + G u u^{\top} A}{u^{\top} A u}  \\
    & \quad + \left( \frac{u^{\top} G u}{u^{\top} A u} + 1 \right) \frac{A u u^{\top} A}{u^{\top} A u}.
\end{split}
\end{equation*}
\item For $\tau = \dfrac{u^{\top} A u}{u^{\top} G u}$, we recover the BFGS update
\begin{equation*}
    {\rm BFGS}(G, A, u) \coloneqq G - \frac{G u u^{\top} G}{u^{\top} G u} + \frac{A u u^{\top} A}{u^{\top} A u}.
\end{equation*}
\item For $\tau=0$, we achieve the SR1 update
\begin{equation}
    {\rm SR1}(G, A, u) \coloneqq G - \frac{(G - A) u u^{\top} (G - A)}{u^{\top} (G - A) u}.
    \label{sr1_def}
\end{equation}
\end{itemize}


We can generalize the Broyden family updates with multiple directions \cite{gao2018block,liu2023symmetric,gower2016stochastic,gower2017randomized}. 
In particular, \citet{liu2023symmetric} establish the block version of the SR1 update called the symmetric rank-$k$ (SR-$k$) update, which is defined as follows.

\begin{definition}\label{dfn:srk}
Let $A \in \BR^{d \times d}$ and $G \in \BR^{d \times d}$ be two positive-definite matrices satisfying $G \succeq A$. 
For any full rank matrix $U\in\BR^{d\times k}$ with $k < d$, we define $\textrm{SR-}k (G, A, U)  \coloneqq G$ if $G U = A U$. Otherwise, we define
\begin{equation*}
\small\begin{split}
        \textrm{SR-}k (G, A, U)\coloneqq G - (G\!-\!A) U (U^{\top} (G\!-\!A) U)^{\dag} U^{\top} (G\!-\!A).
\end{split}
\end{equation*}

\begin{remark}    
Note that the SR-$k$ update shown in the above definition is equivalent to the SR1 update when $k=1$.
\end{remark}
\end{definition}

\section{Methodology}\label{sec:methodology}

In this section, we propose the lazy incremental symmetric rank\Hyphdash*1 (LISR-1) method and provide theoretical analysis to show it enjoys condition number-free local superlinear convergence.



\subsection{The Algorithm}

\begin{algorithm}[tb]
\caption{LISR-1}\label{alg:iqn_algos}
\begin{algorithmic}[1] 
\STATE \textbf{Input}: $x^0\in\BR^d$ and $\{B_i^0\in\BR^{d\times d}\}_{i=1}^n$.\\[0.1cm]
\STATE Initialize $t=0$ and $z_i^0 = x^0$ for any $i \in [n]$. \\[0.1cm]
\STATE \textbf{while} not converged \textbf{do} \\[0.1cm]
\STATE \quad Update $x^{t+1}$ as per (\ref{cur_iter_update}). \\[0.1cm]
\STATE \quad Update $z_{i}^{t+1}$ as per (\ref{active_idx_update}). \\[0.1cm]
\STATE \quad Update $B_{i}^{t+1}$ as per (\ref{greedy_sr1_update-0})-(\ref{greedy_sr1_omega}). \\[0.1cm]
\STATE \quad Increment the iteration counter $t$. \\[0.1cm]
\STATE \textbf{end while} \\[0.1cm]
\STATE \textbf{Output}:  $x^t$.
\end{algorithmic}
\end{algorithm}

We first introduce the main intuitions of LISR-1.
For each component function $f_i(x)$, we consider its quadratic approximation at point $z_i^t\in\BR^d$ as
\begin{align*}
\small\begin{split}    
f_i(x) \approx & {\tilde f}_i^t(x) \\
= & f_i (z_i^t) + \nabla f_i (z_i^t)^{\top} (x - z_i^t) + \frac{1}{2}(x- z_i^t)^{\top} B_i^t (x - z_i^t),
\end{split}
\end{align*}
where we estimate $\nabla^2 f_i(z_i^t)$ by a positive-definite matrix $B_i^t\in\BR^{d\times d}$.
Then we obtain $x^{t+1}$ by minimizing the average of~$\{\tilde f_i^t(x)\}_{i=1}^n$, which has the closed form solution
\begin{align}\label{cur_iter_update}
\begin{split}    
    x^{t+1} =& \argmin_{x\in\BR^d} \frac{1}{n}\sum_{i=1}^n\tilde f_i^t (x) \\
    =& \left( \sum_{i=1}^n B_{i}^{t} \right)^{-1} \left( \sum_{i=1}^n B_{i}^{t} z_{i}^{t} - \sum_{i=1}^n \nabla f_i(z_i^{t})\right).
\end{split}
\end{align}
We only update one of $\{z_i^t\}_{i=1}^n$ at each iteration
in a cyclic fashion to make the algorithm efficient, that is
\begin{align}\label{active_idx_update}
    z_i^{t+1} = \begin{cases}
        x^{t+1},  & \text{if}~ i=i_t, \\[0.1cm]
        z_i^{t},  & \text{otherwise},
    \end{cases}
\end{align}
where $i_t=t~{\rm mod}~n$ is the index of the component
we choose at the $t$-th iteration.

We also wish to construct the Hessian estimators efficiently and keep a fast convergence rate.
In particular, we introduce the scaling parameter $\omega^{t+1}$ and apply the SR1 update on one of the individual Hessian estimators in each iteration:
\begin{itemize}
\item For $i=i_t$, we let
\begin{equation}\label{greedy_sr1_update-0}
\small\begin{split}    
    \!\!\!B_{i}^{t+1} = \omega^{t+1} \textrm{SR1}(B_{i}^{t}, \nabla^2 f_{i}(z_{i}^{t+1}), \Bar{u}(B_{i}^{t}, \nabla^2 f_{i}(z_{i}^{t+1}))), 
\end{split}
\end{equation}
where $\Bar u(\cdot,\cdot)$ is the greedy direction which is defined as
\begin{align}\label{sr_greedy_vec}    
\Bar{u}(G, A) 
\coloneqq\argmax_{u \in \{ e_i \}_{i=1}^d } u^{\top} (G-A) u.
\end{align}
\item For $i\neq i_t$, we let
\begin{equation}\label{greedy_sr1_update-1}
\begin{split}    
    B_{i}^{t+1} = \omega^{t+1} B_{i}^{t}. 
\end{split}
\end{equation}
\end{itemize}
Additionally, we set 
\begin{align}\label{greedy_sr1_omega}
\omega^t = \begin{cases}
\big(1 + M \sqrt{L}r_0 \rho^{\ceil{\frac{t}{n}}}\big)^2,   & \text{if}~n~{\rm mod}~t = 0, \\[0.1cm] 
1, & \text{otherwise}, 
\end{cases}    
\end{align}
for some $\rho\in(0,1-d^{-1})$ and let $r_0$ be an upper bound of $\norm{x_0-x^*}$.
This setting implies the step of scaling is executed once every $n$ iterations. 




We present the whole procedure of the proposed LISR-1 in Algorithm~\ref{alg:iqn_algos}.
We can verify that the per-iteration cost of our algorithm is~$\fO(d^2)$ flops.
Notice that the main cost of LISR-1 comes from the computation of Eq. (\ref{cur_iter_update}), which is dominated by maintaining the inverse of the following sum of individual Hessian estimators
\begin{equation*}
    \Bar{B}^{t+1} \coloneqq \sum_{i=1}^n B_i^{t+1}.
\end{equation*}
We can rewrite the above matrix in the recursive form as
\begin{equation}\label{sum_of_hessian}
\Bar{B}^{t+1} = \Bar{B}^{t} + B_{i_t}^{t+1} - B_{i_t}^{t}.
\end{equation}
In the case of $t~{\rm mod}~n \neq 0$, no scaling is performed since we have $\omega^{t+1}=1$. Denote $\Bar{u}^t$ as the abbreviation of $\bar{u}(B_{i_t}^t, \nabla^2 f_{i_t}(z_{i_t}^t))$, then applying the Sherman-Morrison formula on Eq. (\ref{sum_of_hessian}) implies
\begin{equation}\label{eq:smf}
    (\Bar{B}^{t+1})^{-1} = (\Bar{B}^{t})^{-1} + \frac{(\Bar{B}^{t})^{-1}v^{t}(v^{t})^{\top}(\Bar{B}^{t})^{-1}}{(\bar{u}^t)^{\top} v^{t} - (v^{t})^{\top} (\Bar{B}^{t})^{-1} v^{t}},
\end{equation}
where $v^{t}$ is defined as
\begin{equation*}
    v^{t} = (B_{i_{t}}^{t} - \nabla^2 f_{i_{t}} (z_{i_{t}}^{t+1})) \Bar{u}^t.
\end{equation*}
It is easy to observe that computing the right-hand side of Eq.~(\ref{eq:smf}) takes $\fO(d^2)$ flops for given $(\Bar B^t)^{-1}$ and $v^t$.
In the case of~$t~{\rm mod}~n = 0$, each Hessian estimator may be scaled by a factor $\omega^t\neq 1$, which results in the additional computational cost of $\fO(n d^2)$ flops. However, the amortized per-iteration complexity of this step is still~$\fO(d^2)$ because the scaling occurs once per $n$ iterations.
We provide a more efficient implementation of LISR-1 in the appendix.

\subsection{Convergence Analysis} 

We analyze the convergence of LISR-1 by considering the Euclidean distance to the optimal solution $x^*$.
Firstly, the formula (\ref{cur_iter_update}) indicates the general result:

\begin{lemma}\label{lemma:cur_iter_bound}
    The iteration formula (\ref{cur_iter_update}) satisfies
    \begin{equation}
    \small
    \begin{split}
        \norm{x^{t+1} - x^*} \leq& \frac{\Tilde{L} \Gamma^{t}}{2} \sum_{i=1}^n \norm{z_i^{t} - x^*}^2  \\
        & + \Gamma^{t} \sum_{i=1}^n \norm{B_{i}^{t} - \nabla^2 f_i (z_i^{t})} \norm{z_i^{t} - x^*},
    \end{split}
    \end{equation}
    for all $t \geq 1$, where $\Gamma^t \coloneqq \big\|\big(\sum_{i=1}^n B_i^t\big)^{-1}\big\|$. 
\end{lemma}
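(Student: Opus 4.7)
The plan is to start from the defining formula of $x^{t+1}$, subtract $x^*$ to get an expression for the error, and then estimate each contributing term.

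First, I would rewrite $\bar{B}^t(x^{t+1} - x^*)$ using the update (\ref{cur_iter_update}) with $\bar{B}^t \coloneqq \sum_{i=1}^n B_i^t$:
\begin{equation*}
\bar{B}^t(x^{t+1} - x^*) = \sum_{i=1}^n B_i^t(z_i^t - x^*) - \sum_{i=1}^n \nabla f_i(z_i^t).
\end{equation*}
Since $x^*$ is a minimizer of $f$, the first-order optimality condition yields $\sum_{i=1}^n \nabla f_i(x^*) = 0$, so I can replace the gradient sum by $\sum_{i=1}^n [\nabla f_i(z_i^t) - \nabla f_i(x^*)]$. Applying the fundamental theorem of calculus to each summand, I can write $\nabla f_i(z_i^t) - \nabla f_i(x^*) = J_i^t (z_i^t - x^*)$, where $J_i^t \coloneqq \int_0^1 \nabla^2 f_i(x^* + \tau(z_i^t - x^*))\, d\tau$. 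Substituting back gives the compact identity
\begin{equation*}
\bar{B}^t(x^{t+1} - x^*) = \sum_{i=1}^n (B_i^t - J_i^t)(z_i^t - x^*).
\end{equation*}

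Next, I would split the matrix difference as $B_i^t - J_i^t = [B_i^t - \nabla^2 f_i(z_i^t)] + [\nabla^2 f_i(z_i^t) - J_i^t]$ and bound each piece separately. The first piece is already in the form that appears in the lemma. For the second piece, I would use Assumption \ref{smooth_assumption} on the Lipschitz continuity of $\nabla^2 f_i$ to obtain
\begin{equation*}
\|\nabla^2 f_i(z_i^t) - J_i^t\| \leq \int_0^1 \tilde{L}(1-\tau)\|z_i^t - x^*\|\, d\tau = \tfrac{\tilde{L}}{2}\|z_i^t - x^*\|.
\end{equation*}

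Finally, I would invert $\bar{B}^t$, take norms, apply the triangle inequality over $i$, and use the above two bounds together with the definition $\Gamma^t = \|(\bar{B}^t)^{-1}\|$ to conclude exactly the inequality stated in the lemma. No step looks like a real obstacle; the only care needed is the clean integral representation for $\nabla f_i(z_i^t) - \nabla f_i(x^*)$ and the resulting splitting that isolates the Hessian-approximation error from the linearization error.
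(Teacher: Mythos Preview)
Your proposal is correct and follows essentially the same approach as the paper's own proof: both use the optimality condition $\sum_i \nabla f_i(x^*)=0$, the integral representation of $\nabla f_i(z_i^t)-\nabla f_i(x^*)$ via the fundamental theorem of calculus, the same splitting $B_i^t-J_i^t=[B_i^t-\nabla^2 f_i(z_i^t)]+[\nabla^2 f_i(z_i^t)-J_i^t]$, and Assumption~\ref{smooth_assumption} to bound the second piece by $\tfrac{\tilde L}{2}\|z_i^t-x^*\|$. The only cosmetic difference is that you first manipulate $\bar B^t(x^{t+1}-x^*)$ and invert at the end, whereas the paper carries $H^t=(\bar B^t)^{-1}$ from the start.
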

\begin{remark}
Notice that the proof of Lemma \ref{lemma:cur_iter_bound} only requires the Lipschitz continuity of each $\nabla^2 f_i(\cdot)$ and the iteration formula (\ref{cur_iter_update}).
The validity of this lemma does not rely on the specific choice of Hessian estimators $\{B_i^t\}_{i=1}^n$ and it also can be used to analyze the other incremental fashion methods~\cite{mokhtari2018iqn,lahoti2023sharpened,gao2020incremental}.
\end{remark}

In view of Lemma \ref{lemma:cur_iter_bound}, the more accurate Hessian estimator $B_i^t\approx \nabla^2 f_i(z_i^t)$ can lead to the tighter upper bound of $\norm{x^{t+1}-x^*}$.
Hence, the key to showing the advantage of the proposed method is bounding the difference between $B_i^t$ and~$\nabla^2 f_i(z_i^t)$.
In particular, we introduce the quantity
\begin{equation}
    \nu(G, A) \coloneqq \frac{d \varkappa {\rm tr}(G-A)}{{\rm tr}(A)},
    \label{nudef}
\end{equation}
to describe the difference between two positive definite matrices $G\in\BR^{d\times d}$ and $A\in\BR^{d\times d}$ such that $G \succeq A$.
Based on the measure $\nu(\cdot,\cdot)$ and Lemma \ref{lemma:cur_iter_bound}, we provide the linear convergence of the distance to solution and the error of Hessian approximation as follows.
\begin{lemma}
For any $\rho$ satisfying $\rho\in(0,1 - d^{-1})$, there exist positive constants $r_0$ and $\sigma_0$ such that running LISR\Hyphdash*1 (Algorithm \ref{alg:iqn_algos}) with the initial conditions $\norm{x^0-x^*} \leq r_0$, $B_i^0 \succeq \omega^0 \nabla^2 f_i(z_i^0)$ and $\nu((\omega^0)^{-1}{B_i^0}, \nabla^2 f_i (x^0)) \leq \sigma_0$ for any~$i=1,\dots,n$ results in
 \begin{equation}
        \norm{x^{t+1} - x^*} \leq \rho^{\ceil{\frac{t+1}{n}}} \norm{x^0 - x^*}
        \label{eq:linear_recur_1}
    \end{equation}
and 
    \begin{equation}\label{eq:linear_recur_2}
    \small\begin{split}   
         \nu\big((\omega^{t+1})^{-1} B_{i_t}^{t+1}, \nabla^2 f_{i_t}(z_{i_t}^{t+1})\big) \leq \left(1 - \frac{1}{d}\right)^{\ceil{\frac{t+1}{n}}} \delta,
    \end{split}
    \end{equation}
    where  $M = \Tilde{L} / \mu^{{3}/{2}}$, $\omega_t$ follows the definition in (\ref{greedy_sr1_omega}) and
    \begin{align*}
     \delta \coloneqq \left(\sigma_0 +\frac{4M d L^{{3}/{2}} \mu^{-1} r_0 }{1 - (1-d^{-1})^{-1} \rho}\right)\exp\left(\frac{4M \sqrt{L} r_0}{1 - \rho}\right)   
    \end{align*}
    \label{lemma:core_recur}
\end{lemma}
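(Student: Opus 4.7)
The plan is to prove both inequalities simultaneously by strong induction on $t$, while carrying along the auxiliary dominance invariant $B_i^t \succeq \omega^t \nabla^2 f_i(z_i^t)$ for every $i$. This invariant is needed because the SR1 update requires $G \succeq A$ to be well-defined and for $\nu$ to remain nonnegative; the three initial conditions in the statement supply the base case. The inductive step splits into two halves: first I would use the inductive $\nu$-bound to derive the distance bound (\ref{eq:linear_recur_1}), then use that distance bound to control the updated $\nu$ and verify (\ref{eq:linear_recur_2}).

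For the distance bound, I would invoke Lemma \ref{lemma:cur_iter_bound}. Each $z_i^t$ equals either $x^0$ or some earlier $x^{s+1}$, so the inductive hypothesis yields $\norm{z_i^t - x^*} \leq r_0 \rho^{\lceil s/n\rceil}$. The Hessian approximation error is controlled through $\nu$ via the elementary inequality $\norm{G - A} \leq {\rm tr}(G - A)$ for $G \succeq A$ together with ${\rm tr}(\nabla^2 f_i) \leq d L$, which converts the inductive $\nu$-bound into an operator-norm bound of order $\mu \cdot (1 - d^{-1})^{\lceil s/n \rceil}\delta$. Meanwhile $\Gamma^t$ is bounded by $1/(n\mu)$ using the dominance invariant together with $\omega^t \geq 1$. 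Substituting into Lemma \ref{lemma:cur_iter_bound} and summing the resulting geometric series in $\rho^{\lceil s/n\rceil}$ produces a bound of the form $C \rho \cdot r_0 \rho^{\lceil t/n \rceil}$, where $C$ depends on $r_0, \sigma_0, M, d, L, \mu$; the induction closes once $r_0$ is chosen small enough that $C \leq 1$.

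For the $\nu$-bound, the transition from $B_{i_t}^t$ to $B_{i_t}^{t+1}$ has three ingredients. First, the reference Hessian shifts from $\nabla^2 f_{i_t}(z_{i_t}^t)$ to $\nabla^2 f_{i_t}(x^{t+1})$; strong self-concordance combined with the already-proven distance bound yields $\nabla^2 f_{i_t}(x^{t+1}) \preceq (1 + M \sqrt{L} r_0 \rho^{\lceil t/n \rceil})^2 \nabla^2 f_{i_t}(z_{i_t}^t)$. Second, the scaling $\omega^{t+1}$ is defined precisely to absorb this factor at epoch boundaries, restoring the dominance invariant while inflating $\nu$ by a controlled amount. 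Third, the core greedy SR1 contraction $\nu({\rm SR1}(G, A, \bar u(G, A)), A) \leq (1 - d^{-1}) \nu(G, A)$, which follows because $\bar u$ selects the coordinate accounting for at least a $1/d$ fraction of ${\rm tr}(G - A)$, delivers the $(1 - 1/d)$ contraction per update of component $i_t$. Telescoping over the history of $i_t$ yields the claimed form of $\delta$: the accumulated self-concordance increments form a geometric series with ratio $\rho/(1 - d^{-1}) < 1$ that sums to the $4Md L^{3/2}\mu^{-1} r_0/(1-(1-d^{-1})^{-1}\rho)$ term, while the telescoping product $\prod_k (1 + M\sqrt{L} r_0 \rho^k)^2 \leq \exp(4M\sqrt{L} r_0 /(1-\rho))$ supplies the exponential prefactor.

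The main obstacle is closing the simultaneous induction: $r_0$ and $\sigma_0$ must be chosen small enough that the prefactor in the distance recursion is at most $\rho$ and that the induced $\delta$ remains compatible with the dominance invariant throughout the run. This demands a careful quantitative balance, especially between the quadratic term $\Tilde{L}\Gamma^t\sum \norm{z_i^t - x^*}^2/2$ and the linear term $\Gamma^t\sum \norm{B_i^t - \nabla^2 f_i(z_i^t)} \norm{z_i^t - x^*}$ in Lemma \ref{lemma:cur_iter_bound}. The restriction $\rho \in (0, 1 - d^{-1})$ is essential precisely because the shift-induced inflation of $\nu$ must be strictly dominated by the per-epoch SR1 contraction for the series defining $\delta$ to converge, and threading this constraint through the two coupled recursions is the delicate part of the argument.
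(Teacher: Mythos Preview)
Your proposal is correct and follows essentially the same architecture as the paper: simultaneous strong induction on the distance bound and the $\nu$-bound, driven by Lemma~\ref{lemma:cur_iter_bound}, the greedy SR1 trace contraction, and strong self-concordance to absorb the reference-point shift, with the telescoping producing exactly the claimed form of $\delta$. One small difference worth flagging: you bound $\Gamma^t \le 1/(n\mu)$ directly from the dominance invariant $B_i^t \succeq \nabla^2 f_i(z_i^t)\succeq \mu I$, whereas the paper detours through Banach's lemma to obtain only $\Gamma^t \le (1+\rho)/(n\mu)$ --- your shortcut is valid and in fact slightly loosens the closing condition on $r_0,\sigma_0$; also, the sum in Lemma~\ref{lemma:cur_iter_bound} at step $t=jn+m$ is not a geometric series in $\rho^{\lceil s/n\rceil}$ but simply $n$ terms all of order $\rho^j$, though this does not affect the argument.
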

\begin{remark}
    In the proof of Lemma \ref{lemma:core_recur}, we show that the relation $(\omega^{t+1})^{-1} B_{i_t}^{t+1} \succeq \nabla^2 f_{i_t}(z_{i_t}^{t+1})$ holds for each iteration. This guarantees the update rule (\ref{cur_iter_update}) and the error measure  $\nu\big((\omega^{t+1})^{-1} B_{i_t}^{t+1}, \nabla^2 f_{i_t}(z_{i_t}^{t+1})\big)$ are well-defined.
\end{remark}

We establish the mean-superlinear convergence based on Lemma~\ref{lemma:core_recur}. Specifically, we have the following result.
\begin{lemma}
    Following the initial conditions of Lemma~\ref{lemma:core_recur},
    the sequence of iterates generated by the LISR\Hyphdash*1 method (Algorithm~\ref{alg:iqn_algos}) satisfies
    \begin{equation*}
        \norm{x^{t+1} - x^*} \leq \left(1 - \frac{1}{d}\right)^{\ceil{\frac{t+1}{n}}}\cdot\frac{1}{n}\sum_{i=1}^n \norm{x^{t+1-i} - x^*}.
    \end{equation*}
        \label{lemma:sr1_avg}
\end{lemma}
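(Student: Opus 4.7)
The plan is to start from Lemma~\ref{lemma:cur_iter_bound}, substitute the estimates from Lemma~\ref{lemma:core_recur} for both the distances $\{\norm{z_i^t - x^*}\}$ and the Hessian errors $\{\norm{B_i^t - \nabla^2 f_i(z_i^t)}\}$, and observe that the cyclic memory update (\ref{active_idx_update}) identifies the multiset $\{z_i^t\}_{i=1}^n$ with $\{x^{t+1-i}\}_{i=1}^n$ after one full cycle. This identification immediately reproduces the right-hand side of the claim once the coefficients are controlled.

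First I would factor $\norm{z_i^t-x^*}$ out of each summand in Lemma~\ref{lemma:cur_iter_bound}, writing
\begin{equation*}
\norm{x^{t+1}-x^*} \leq \sum_{i=1}^n c_i\,\norm{z_i^t-x^*}, \quad c_i \coloneqq \frac{\Tilde{L}\,\Gamma^t}{2}\norm{z_i^t-x^*} + \Gamma^t\norm{B_i^t-\nabla^2 f_i(z_i^t)},
\end{equation*}
and reduce the claim to showing $c_i \leq \frac{1}{n}(1 - d^{-1})^{\ceil{(t+1)/n}}$ for every $i$. The factor $\Gamma^t$ is easy to control: the inequality $(\omega^t)^{-1} B_i^t \succeq \nabla^2 f_i(z_i^t) \succeq \mu I$ established inside the proof of Lemma~\ref{lemma:core_recur} gives $B_i^t \succeq \mu I$ and hence $\Gamma^t \leq 1/(n\mu)$. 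For the distance factor, the linear-rate part of Lemma~\ref{lemma:core_recur}, combined with the cyclic identification $z_i^t = x^{s_i}$ for some $s_i\in\{t-n+1,\ldots,t\}$, yields $\norm{z_i^t-x^*} \leq \rho^{\ceil{s_i/n}} r_0$, which, since $\rho < 1 - d^{-1}$, is at most a constant multiple of $(1-d^{-1})^{\ceil{(t+1)/n}} r_0$.

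Next I would convert the $\nu$-bound of Lemma~\ref{lemma:core_recur} into an operator-norm bound via the elementary inequality $\norm{G-A}\leq{\rm tr}(G-A)$ (valid when $G\succeq A$) together with the definition of $\nu$:
\begin{equation*}
\norm{(\omega^t)^{-1} B_i^t - \nabla^2 f_i(z_i^t)} \leq {\rm tr}\bigl((\omega^t)^{-1} B_i^t - \nabla^2 f_i(z_i^t)\bigr) = \frac{{\rm tr}(\nabla^2 f_i(z_i^t))}{d\varkappa}\,\nu \leq \mu\,\nu,
\end{equation*}
where I used $\nabla^2 f_i\preceq LI$ so that ${\rm tr}(\nabla^2 f_i(z_i^t))\leq dL$. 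The $\nu$-estimate in Lemma~\ref{lemma:core_recur} then yields $\norm{B_i^t - \nabla^2 f_i(z_i^t)} \leq C\mu\delta\,(1-d^{-1})^{\ceil{s_i/n}}$ for a universal constant $C$, after absorbing the triangle-inequality correction of $(\omega^t-1)\norm{\nabla^2 f_i(z_i^t)}$ that, by (\ref{greedy_sr1_omega}), is exponentially small in $\ceil{t/n}$. Plugging both estimates back in, $c_i$ is bounded by $\frac{1}{n}\bigl(C_1 \Tilde{L} r_0/\mu + C_2 \delta\bigr)(1-d^{-1})^{\ceil{(t+1)/n}}$, and by shrinking $r_0$ and $\sigma_0$---which Lemma~\ref{lemma:core_recur} permits---the bracket can be driven below $1$, finishing the argument.

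The main obstacle is the bookkeeping of the scaling factors $\omega^t$. The estimator $B_i^t$ equals the product of all $\omega^s$ applied to coordinate $i$ since its last SR1 update $\tau_i(t)$, multiplied by $B_i^{\tau_i(t)}$, whereas Lemma~\ref{lemma:core_recur} controls $\nu$ only at the exact instant of such an update. Because $\omega^t=1$ except once per cycle and the excess $\omega^t-1$ is geometrically small in $\ceil{t/n}$, these corrections are manageable but must be tracked carefully through the triangle inequality to confirm that the dominant term really behaves like $(1-d^{-1})^{\ceil{(t+1)/n}}$.
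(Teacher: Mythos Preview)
Your proposal is correct and follows essentially the same route as the paper: start from Lemma~\ref{lemma:cur_iter_bound}, bound $\Gamma^t$, convert the $\nu$-estimate into an operator-norm bound on $\norm{B_i^t-\nabla^2 f_i(z_i^t)}$ (absorbing the $\omega^t$-correction via the triangle inequality), use the linear rate on $\norm{z_i^t-x^*}$, and finally invoke the smallness condition on $r_0,\sigma_0$ (the paper's condition (\ref{rho_condition})) together with $\rho<1-d^{-1}$ to make the coefficient $\le\frac{1}{n}(1-d^{-1})^{\ceil{(t+1)/n}}$. Two small remarks: your direct bound $\Gamma^t\le 1/(n\mu)$ from $B_i^t\succeq\mu I$ is actually cleaner than the paper's Banach-lemma route; and you need not ``further shrink'' $r_0,\sigma_0$---the very same choice that makes Lemma~\ref{lemma:core_recur} go through (namely (\ref{rho_condition})) already forces the bracketed constant below $\rho<1-d^{-1}$.
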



Using Lemma \ref{lemma:sr1_avg}, we can achieve the local superlinear convergence rate of the proposed LISR\Hyphdash*1 method by induction.
We formally present our main result as follows.
\begin{theorem}
    For the sequence $\{x_t\}$ generated by LISR\Hyphdash*1  (Algorithm~\ref{alg:iqn_algos}) with the initial conditions shown in  Lemma~\ref{lemma:core_recur}, there exists a sequence $\{\zeta^l\}$ such that $\norm{x^{t+1} - x^*} \leq \zeta^{\floor{{t}/{n}}}$ for any $t\geq 1$ and it satisfies
    \begin{equation}
        \zeta^l \leq r_0 \left(1 - \frac{1}{d}\right)^{\frac{(l+2)(l+1)}{2}}.
        \label{main_result_eq}
    \end{equation}
    \label{thm:glins_res}
\end{theorem}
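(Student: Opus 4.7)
The plan is to perform induction on the epoch index $l$, with Lemma~\ref{lemma:sr1_avg} providing the key one-step recursion and the base case handled by the linear rate from Lemma~\ref{lemma:core_recur}. Let $q \coloneqq 1 - 1/d$, define the candidate bound $\zeta^l \coloneqq r_0\, q^{(l+1)(l+2)/2}$ for $l \geq 0$, and introduce the epoch-maximum
\begin{equation*}
Z^l \coloneqq \max\bigl\{\norm{x^{t+1} - x^*} : \floor{t/n} = l\bigr\}.
\end{equation*}
Since $\norm{x^{t+1}-x^*} \leq Z^{\floor{t/n}}$ by definition, the theorem reduces to proving $Z^l \leq \zeta^l$ for every $l \geq 0$.

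For the base case $l = 0$, every $t \in \{1,\ldots,n-1\}$ satisfies $\ceil{(t+1)/n} = 1$, so the linear bound~\eqref{eq:linear_recur_1} of Lemma~\ref{lemma:core_recur} gives $\norm{x^{t+1} - x^*} \leq \rho\, r_0$. Since $\rho \in (0, 1-1/d) = (0,q)$, this yields $Z^0 \leq \rho r_0 \leq q r_0 = \zeta^0$.

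For the inductive step ($l \geq 1$), assume $Z^{l-1} \leq \zeta^{l-1}$ and fix any $t$ with $\floor{t/n} = l$. The $n$ consecutive iterates $x^{t+1-n},\ldots,x^t$ that appear in Lemma~\ref{lemma:sr1_avg} lie entirely within epochs $l-1$ and $l$, so every term $\norm{x^{t+1-i} - x^*}$ in the averaging is bounded by $\max(Z^l,Z^{l-1})$. Lemma~\ref{lemma:sr1_avg} therefore gives
\begin{equation*}
\norm{x^{t+1} - x^*} \leq q^{\,l+1}\, \max(Z^l,\, Z^{l-1}),
\end{equation*}
and taking the maximum over $t$ in epoch $l$ produces $Z^l \leq q^{l+1}\max(Z^l,Z^{l-1})$.

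The main obstacle is the apparent self-reference in this last inequality: the averaging in Lemma~\ref{lemma:sr1_avg} mixes the current epoch with the previous one, so $Z^l$ sits on both sides. I resolve it with a short dichotomy argument. If we had $Z^l > Z^{l-1}$, the inequality would reduce to $Z^l \leq q^{l+1} Z^l$, contradicting $q^{l+1} < 1$. Hence necessarily $Z^l \leq Z^{l-1}$, and the inequality sharpens to
\begin{equation*}
Z^l \leq q^{\,l+1} Z^{l-1} \leq q^{\,l+1}\,\zeta^{l-1} = r_0\, q^{\,l(l+1)/2 + (l+1)} = r_0\, q^{\,(l+1)(l+2)/2} = \zeta^l,
\end{equation*}
closing the induction. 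A secondary point worth noting is that Lemma~\ref{lemma:sr1_avg} implicitly needs $t-n+1 \geq 0$ for its averaged sum to be well-defined; this is automatic for $l\geq 1$ and is precisely why the base case $l=0$ must be handled separately through the linear rate of Lemma~\ref{lemma:core_recur}.
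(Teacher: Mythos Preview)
Your proof is correct and follows essentially the same epoch-based induction that the paper uses (the paper defers verbatim to Theorem~1 of \citet{lahoti2023sharpened} with the constant $d^{-1}\varkappa^{-1}$ replaced by $d^{-1}$, and that argument is precisely the one you have reproduced). One cosmetic point: your base case should range over $t\in\{0,1,\ldots,n-1\}$ rather than $\{1,\ldots,n-1\}$, since your definition of $Z^0$ includes $t=0$ and the inductive step at $l=1$ needs $\norm{x^1-x^*}\le Z^0$; the bound $\norm{x^1-x^*}\le\rho r_0$ from Lemma~\ref{lemma:core_recur} covers this case as well, so nothing changes.
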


\subsection{Discussion}

The convergence analysis in the last subsection shows that LISR\Hyphdash*1 enjoys the condition number-free superlinear convergence rate, which is significantly better than all of the existing incremental fashion quasi-Newton methods (see Table \ref{tab:theoretic_res}).
The improvement is due to that we adopt the greedy SR1 update to maintain the Hessian estimator in formula (\ref{greedy_sr1_update-0}) and the analysis characterizes the Hessian approximation error by the measure $\nu(\cdot,\cdot)$ defined in (\ref{nudef}).
In contrast, the prior methods IGS~\cite{gao2020incremental} and SLIQN~\cite{lahoti2023sharpened} only consider the general Broyden family update and characterize the Hessian approximation error by the measure
$\sigma(G, A) = {\rm tr}(A^{-1}(G - A))$ for positive definite $G\in\BR^{d\times d}$ and $A\in\BR^{d\times d}$, which leads to additional dependency on condition number in the superlinear convergence rate.\footnote{These work present their theoretical results by analyzing BFGS update, while their analysis can be directly applied to the general Broyden family update and achieves the identical convergence rate.} 
On the other hand, the implementations of these methods are more complicated than ours. 
Concretely, IGS requires scaling a Hessian estimator at each iteration which results in $\fO(d^3)$ computational cost, and
SLIQN maintains $B_{i_t}^{t+1}$ by a combination of secant equation-based and greedy Broyden family updates while our LISR-1 only has one step of greedy SR1 update~(\ref{greedy_sr1_update-0}).

\begin{figure*}[!tb]
\centering
\begin{tabular}{ccc}
     \includegraphics[scale=0.45]{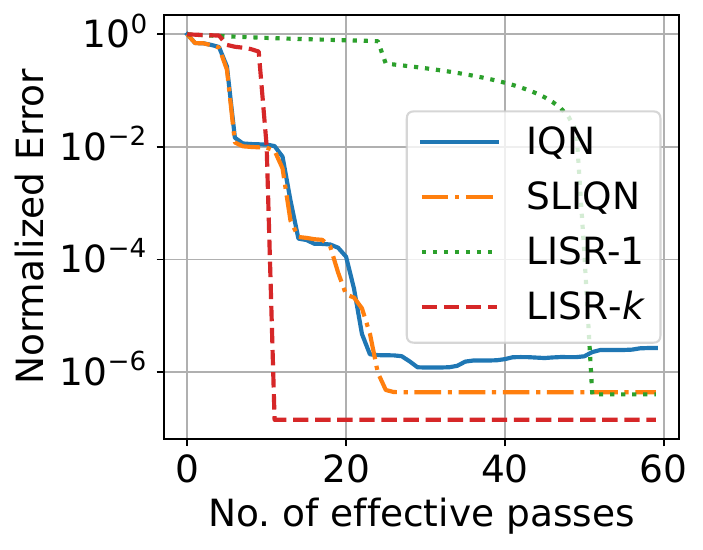}
     & \includegraphics[scale=0.45]{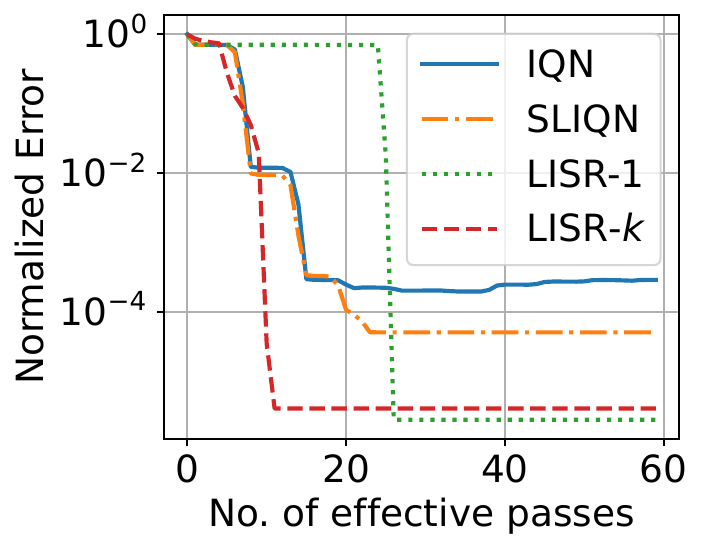}
     & \includegraphics[scale=0.45]{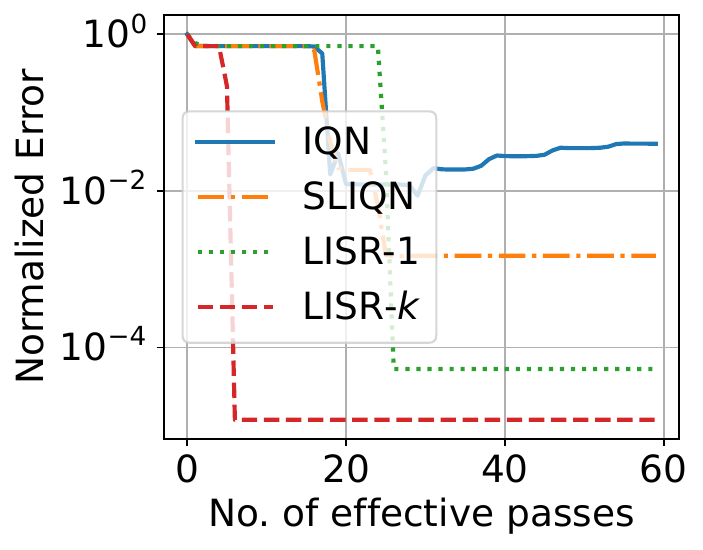} \\
     (a) $\xi = 4$, $\varkappa=3.03\times10^2$ & (b) $\xi = 8$, $\varkappa=3.12\times10^4$ & (c) $\xi = 12$, $\varkappa=3.12\times10^6$
\end{tabular}
\caption{Normalized error vs. the number of effective passes for the quadratic programming problem.}
\label{fig:quadratic_result}
\end{figure*}

\section{Extension to Block Quasi-Newton Methods}\label{sec:block_update}

It is possible to incorporate the idea of block quasi-Newton methods into the framework of the LISR-1.
Specifically, we only need to modify Line 6 of Algorithm \ref{alg:iqn_algos} by replacing the update rule (\ref{greedy_sr1_update-0}) with 
\begin{equation}\label{greedy_srk_update}
\small
\begin{split}    
\!\!\!\!B_{i_t}^{t+1}\!=\!\omega^{t+1} \textrm{SR-}k(B_{i_t}^{t},\!\nabla^2 f_{i_t}(z_{i_t}^{t+1}),\!\Bar{U}(B_{i_t}^{t},\!\nabla^2 f_{i_t}(z_{i_t}^{t+1}))),
\end{split}
\end{equation}
where $\Bar U(\cdot,\cdot)$ contains greedy directions which is defined as
\begin{equation}
    \Bar{U}(G, A)
    = E_k(G-A).
    \label{greedy_matrix}
\end{equation}
We name the variant of LISR-1 with the above modification as Lazy Incremental Symmetric Rank-$k$ (LISR-$k$) method.

The LISR-$k$ method requires $\fO(k d^2)$ flops in each iteration.
Since we typically set $k$ to be much smaller than $d$, such computational cost is acceptable.
Similar to the previous analysis, the cost of LISR-$k$ is dominated by maintaining the inverse of the sum of individual
Hessian estimators 
\begin{equation*}
    \Bar{B}^{t+1} \coloneqq \sum_{i=1}^n B_i^{t+1},
\end{equation*}
which can be written as $\Bar{B}^{t} + B_{i_t}^{t+1} - B_{i_t}^{t}$.
The main difference between the two algorithms is the update on $\Bar B^{t+1}$ (its inverse) in the case of $t~{\rm mod}~n \neq 0$. For the LISR-$k$ method, we have
\begin{equation*}
\begin{split}
        \Bar{B}^{t+1} = &\Bar{B}^{t} - V^{t} \left((\Bar{U}^{t})^{\top} V^{t}\right)^{-1} (V^{t})^{\top},
\end{split}
\end{equation*}
where we define $V^{t}=\left(B_{i_t}^{t}-\nabla^2 f_{i_{t}}(z_{i_{t}}^{t+1})\right) \Bar{U}^{t}\in\BR^{d\times k}$ and $\Bar{U}^{t}=\Bar{U}(B_{i_{t}}^{t}, \nabla^2 f_{i_{t}}(z_{i_{t}}^{t+1}))\in\BR^{d\times k}$. Applying the Sherman-Morrison formula, we achieve
\begin{equation}\label{eq:inverse-k}\small
\begin{split}
    (\Bar{B}^{t+1})^{-1} = & (\Bar{B}^{t})^{-1} + (\Bar{B}^{t})^{-1}  V^{t} (D^{t})^{-1} (V^{t})^{\top} (\Bar{B}^{t})^{-1},
\end{split}
\end{equation}
where $D^{t} = (\Bar{U}^{t})^{\top} V^{t} -  (V^{t})^{\top} (\Bar{B}^{t})^{-1} V^{t}\in\BR^{k\times k}$. 
It can be observed that constructing $D^t$ takes $\fO(k d^2)$ flops for given $(\Bar B_t)^{-1}$. 
Additionally, the complexity of computing $(D^t)^{-1}$ is not the leading cost since we take $k\ll d$.
Hence, the total cost for computing Eq.~(\ref{eq:inverse-k}) is $\fO(k d^2)$ flops.
Similar to LISR-1, the setting of $\omega^{t+1}$ guarantees the scaling occurs once every $n$ iterations and its amortized per-iteration complexity is no more than $\fO(k d^2)$ flops.


\paragraph{Even Faster Convergence Rate} 

The rank-$k$ update in the LISR-$k$ leads to sharper upper bounds on the distance to optimal solution and approximation error of Hessian estimators.
Compared with Lemma \ref{lemma:sr1_avg}, the LISR-$k$ holds the following tighter upper bounds
\begin{equation*}
    \norm{x^{t+1} - x^*} \leq \left(1 - \frac{k}{d}\right)^{\ceil{\frac{t+1}{n}}}\frac{1}{n}\sum_{i=1}^n \norm{x^{t+1-i} - x^*}
\end{equation*}
and
\begin{equation*}
    \nu((\omega^{t+1})^{-1} B_{i_t}^{t+1}, \nabla^2 f_{i_t}(z_{i_t}^{t+1})) \leq \left(1 - \frac{k}{d} \right)^{\ceil{\frac{t+1}{n}}} \delta.
\end{equation*}

Consequently, we can show the mean-superlinear convergence result like Lemma \ref{lemma:sr1_avg}, and the new result improves the base of convergence rate from $1-d^{-1}$ to $1-kd^{-1}$. 
Finally, we achieve the main result of the LISR-$k$ method as follows.
\begin{theorem}
    We follow the initial conditions of Lemma~\ref{lemma:core_recur} but initialize $\rho$ with $\rho\in(0,1-kd^{-1})$. For the sequence of iterates $\{x^t\}$ generated by the LISR-$k$~method, there exists sequence $\{\zeta^l\}$ such that $\norm{x^t-x^*} \leq \zeta^{\floor{(t-1)/n}}$ for any $t\geq 1$ and it satisfies
    \begin{equation}
        \zeta^l \leq r_0 \left(1 - \frac{k}{d}\right)^{\frac{(l+2)(l+1)}{2}}.
        \label{srk_result_eq}
    \end{equation}
    \label{thm:glins+_res}
\end{theorem}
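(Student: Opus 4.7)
The proof mirrors the structure of Theorem~\ref{thm:glins_res} (the LISR-$1$ case), replacing the factor $1-d^{-1}$ by $1-k/d$ throughout. I would proceed in three steps: (i) establish a sharpened one-step Hessian-error contraction for the greedy SR-$k$ update, (ii) propagate it through the LISR-$k$ iteration to obtain block analogues of Lemmas~\ref{lemma:core_recur} and~\ref{lemma:sr1_avg}, and (iii) close the argument by the same block induction used to deduce Theorem~\ref{thm:glins_res}.

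\textbf{Step 1: sharpened one-step contraction.}
The only place where the scalar $1-d^{-1}$ enters the LISR-$1$ analysis is in the greedy SR$1$ trace inequality $\nu(\mathrm{SR1}(G,A,\bar u(G,A)),A)\le(1-d^{-1})\nu(G,A)$. I would verify the block analogue
\begin{equation*}
\nu\bigl(\mathrm{SR\text{-}}k(G,A,\bar U(G,A)),A\bigr)\;\le\;\Bigl(1-\frac{k}{d}\Bigr)\nu(G,A),
\end{equation*}
using that $\bar U(G,A)=E_k(G-A)$ selects the $k$ largest diagonal entries of $G-A$, whose sum is at least $(k/d)\operatorname{tr}(G-A)$, and that the SR-$k$ update annihilates $G-A$ on the column span of $\bar U$ while preserving $G\succeq A$. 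This is essentially the contraction already obtained by \citet{liu2023symmetric}; I would carefully check that the pseudo-inverse $(U^\top(G-A)U)^\dagger$ keeps the update well-defined and monotone in the Loewner order.

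\textbf{Step 2: block analogues of Lemmas~\ref{lemma:core_recur} and \ref{lemma:sr1_avg}.}
The remaining ingredients of the proof of Lemma~\ref{lemma:core_recur}, namely the strong self-concordance blow-up absorbed by the scaling $\omega^{t+1}=(1+M\sqrt{L}r_0\rho^{\lceil t/n\rceil})^2$, the invariance $(\omega^{t+1})^{-1}B_{i_t}^{t+1}\succeq\nabla^2 f_{i_t}(z_{i_t}^{t+1})$, and the propagation of $\nu$ across the stale components $i\neq i_t$, are independent of the rank of the update. Plugging Step~1 into the same recursion with $\rho\in(0,1-k/d)$ yields
\begin{equation*}
\|x^{t+1}-x^*\|\le \rho^{\lceil(t+1)/n\rceil}\|x^0-x^*\|,\qquad
\nu\bigl((\omega^{t+1})^{-1}B_{i_t}^{t+1},\nabla^2 f_{i_t}(z_{i_t}^{t+1})\bigr)\le \Bigl(1-\tfrac{k}{d}\Bigr)^{\lceil(t+1)/n\rceil}\delta,
\end{equation*}
together with the mean-superlinear bound
\begin{equation*}
\|x^{t+1}-x^*\|\le \Bigl(1-\tfrac{k}{d}\Bigr)^{\lceil(t+1)/n\rceil}\frac{1}{n}\sum_{i=1}^n\|x^{t+1-i}-x^*\|.
\end{equation*}

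\textbf{Step 3: block induction.}
Define $\zeta^0=r_0$ and $\zeta^{l+1}=(1-k/d)^{l+2}\zeta^l$, so telescoping gives $\zeta^l\le r_0(1-k/d)^{(l+2)(l+1)/2}$. I would induct on the block index $l$. The base case $l=0$ (i.e.\ $t\le n$) follows from the linear bound above since $\rho<1-k/d$. For the inductive step at block $l+1$, apply the mean-superlinear bound: the averaging window $\{x^{t+1-i}\}_{i=1}^n$ lies in blocks of index at most $l$, so by the inductive hypothesis each term is bounded by $\zeta^l$, and the prefactor $(1-k/d)^{l+2}=\zeta^{l+1}/\zeta^l$ closes the induction.

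\textbf{Main obstacle.}
The real work is Step~1: pinning down the rank-$k$ greedy trace contraction with the correct factor $1-k/d$. The subtlety is that one must translate the maximality of $E_k(G-A)$—a purely diagonal statement—into a contraction in the $\nu$-metric, and control the pseudo-inverse so that the SR-$k$ update stays monotone. Once that lemma is in hand, Steps~2 and~3 are almost a verbatim transcription of the LISR-$1$ proof with $1-d^{-1}$ replaced by $1-k/d$.
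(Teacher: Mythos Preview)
Your proposal is correct and follows essentially the same route as the paper: the paper's own proof simply notes that the argument for Theorem~\ref{thm:glins_res} carries over verbatim with the constant $1-d^{-1}$ replaced by $1-k d^{-1}$, relying on the SR-$k$ greedy trace contraction of \citet{liu2023symmetric} (exactly your Step~1) and the block analogues of Lemmas~\ref{lemma:core_recur} and~\ref{lemma:sr1_avg} (your Step~2). One minor bookkeeping slip: with your choice $\zeta^0=r_0$ the telescoping exponent comes out to $(l+1)(l+2)/2-1$ rather than $(l+1)(l+2)/2$; to match the stated bound you should initialize $\zeta^0=(1-k/d)r_0$, which is justified since the linear-rate bound already gives $\|x^t-x^*\|\le\rho\,r_0<(1-k/d)r_0$ for $1\le t\le n$.
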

\begin{remark}
For $k = 1$, the LISR-$k$ method degenerates to the LISR-1 method.
For $k\geq 2$, the superlinear convergence rate of LISR-$k$ (Theorem \ref{thm:glins+_res}) is strictly tighter than the counterpart of LISR-$1$ (Theorem \ref{thm:glins_res}).
\end{remark}

\section{Experiments} \label{sec:experiment}
We compare the proposed methods LISR-1 and LISR-$k$ with baseline methods including IQN~\cite{mokhtari2018iqn} and SLIQN~\cite{lahoti2023sharpened}.
We test all methods on the problems of quadratic programming and regularized logistic regression.
For the LISR-$k$ method, we set $k=5$ for all of the cases.
For the fairness of comparison, we run all algorithms from the same initial point.

\subsection{Quadratic Function Minimization}
We consider the following quadratic function minimization problem 
\begin{equation}
\small
    \min_{x\in \BR^d} f(x) \coloneqq \frac{1}{n} \sum_{i=1}^n \Big( \frac{1}{2} \langle x, A_i x\rangle + \langle b_i, x\rangle\Big),
\end{equation}
where $A_i \in \BR^{d \times d}$ is positive definite and $b_i\in\BR^d$. 
Following the setup of~\citet{mokhtari2018iqn}, we let each $A_i$ be diagonal matrix by setting the first half of diagonal entries be independent uniformly sampled from $[1, 10^{\xi / 2}]$ while the others are independent uniformly sampled from $[10^{-\xi / 2}, 1]$, where $\xi>0$ is the parameter that affects the condition number of the problem.
For each $b_i$, we let its entries be independently uniformly sampled from $[0, 10^3]$.

We run the experiments by taking $n=1000$, $d=50$ and $\xi\in\{4,8,12\}$, and we present the results in Figure~\ref{fig:quadratic_result}. 
We observe that the condition number heavily affects the convergence behaviors of IQN and SLIQN, while the proposed methods LISR-1 and LISR-$k$ are insensitive to the varying condition numbers. 
These results validate our theoretical analysis since we have shown the superlinear convergence rates of our methods do not depend on the condition number.

\begin{figure*}[!tb]
\centering
\begin{tabular}{ccc}
     \includegraphics[scale=0.45]{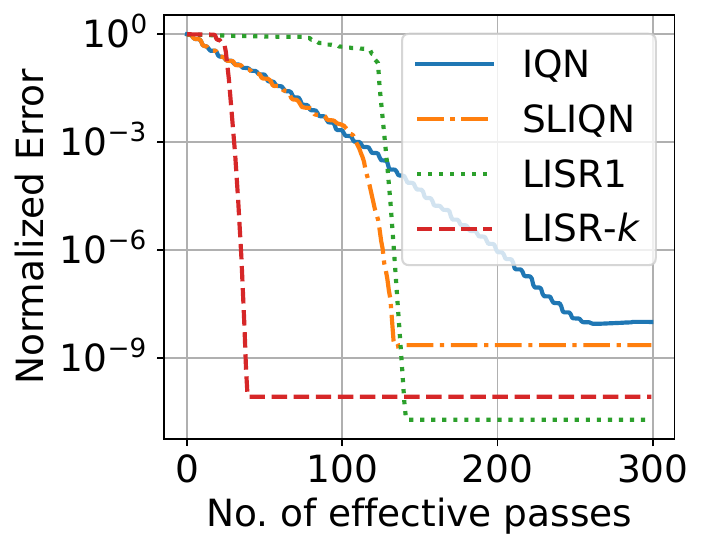}
     & \includegraphics[scale=0.45]{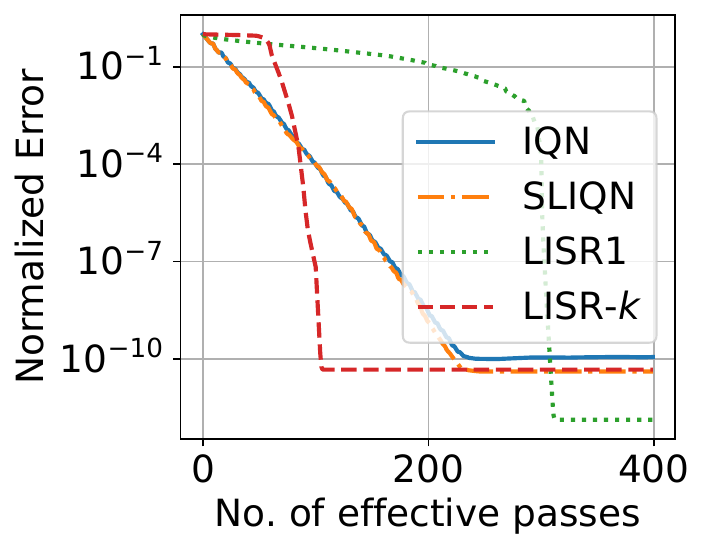}
     & \includegraphics[scale=0.45]{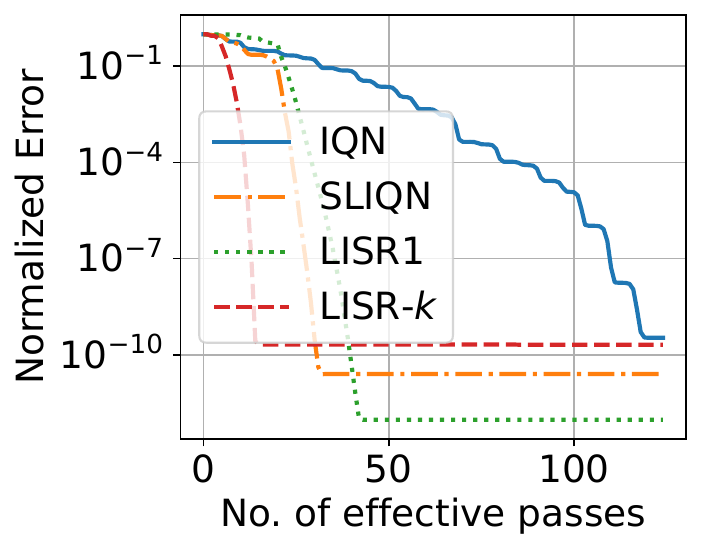} \\
     (a) a9a & (b) w8a &  (c) ijcnn1 \\[0.1cm]
     \includegraphics[scale=0.45]{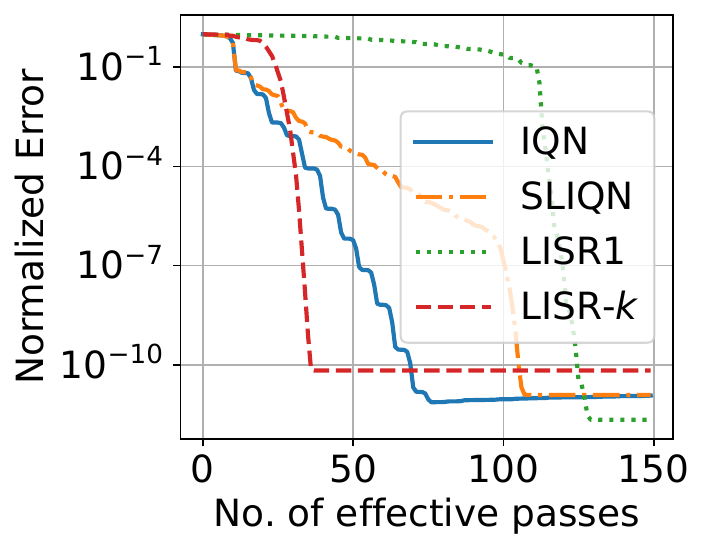}
     & \includegraphics[scale=0.45]{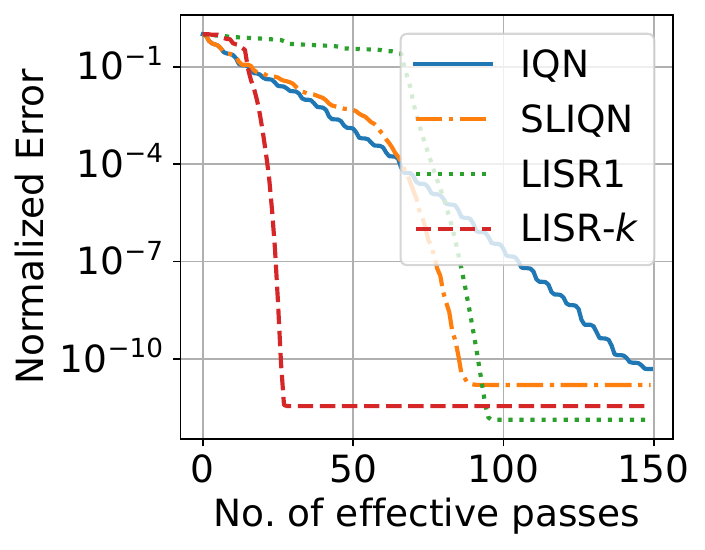}
     & \includegraphics[scale=0.45]{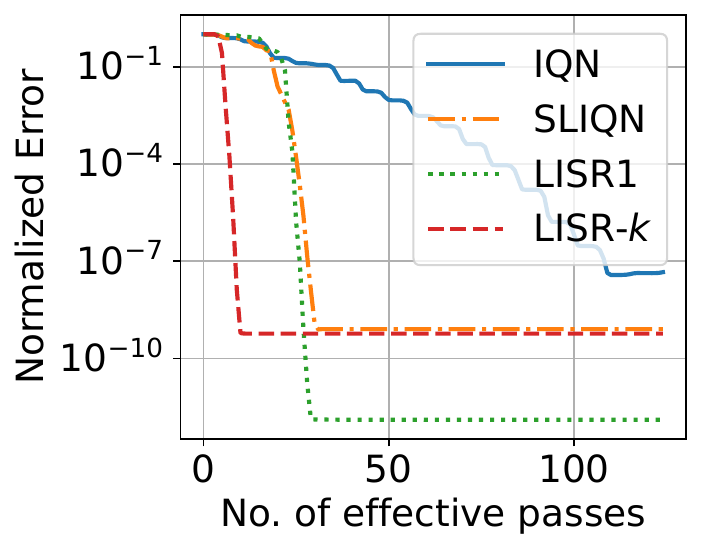} \\   
     (e) mushrooms & (f) phishing &  (g) svmguide3 \\[0.1cm]
     \includegraphics[scale=0.45]{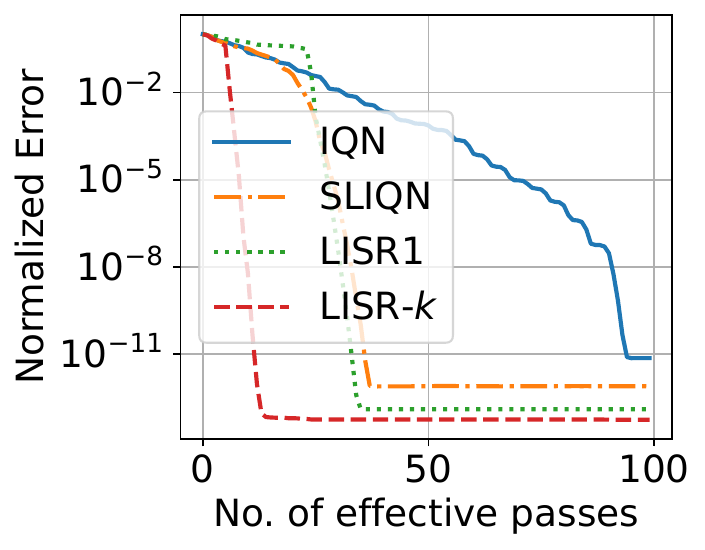}
     & \includegraphics[scale=0.45]{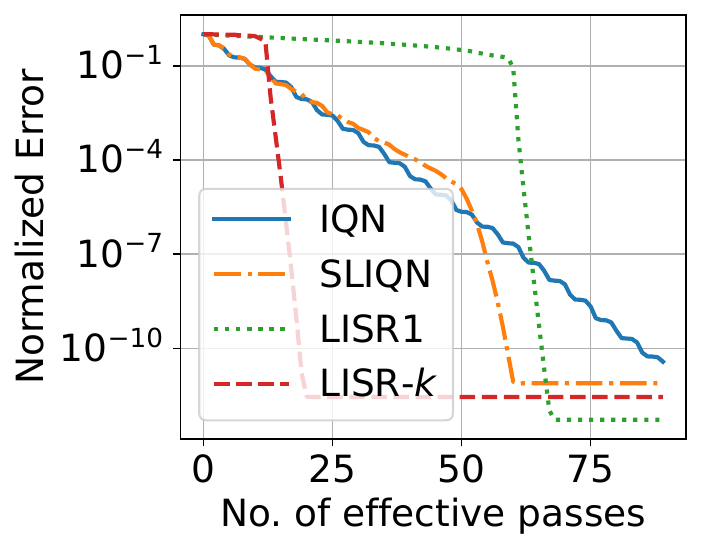}
     & \includegraphics[scale=0.45]{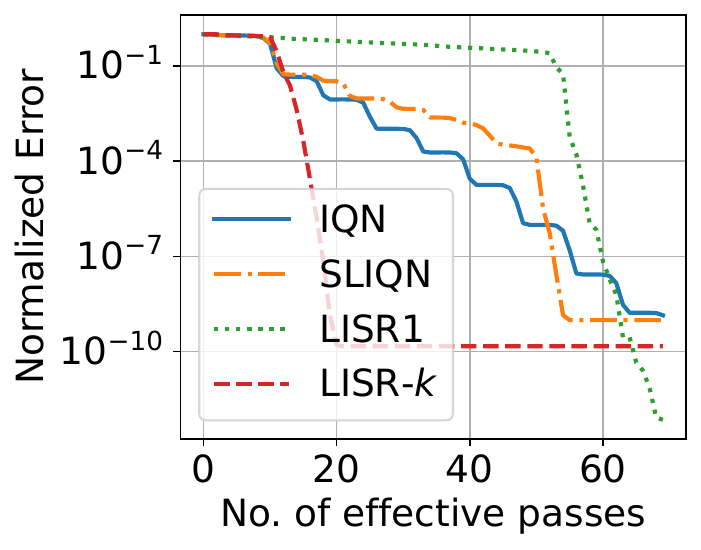}    \\
     (h) german.numer & (i) splice &  (j) covtype \\[0.1cm]
\end{tabular}
\caption{Normalized error vs. the number of effective passes for the regularized logistic regression problem on several real-world datasets . }
\label{fig:general_res}
\end{figure*}

\subsection{Regularized Logistic Regression}

We consider $\ell_2$-regularized logistic regression problem 
\begin{equation}
\begin{split}
     \min_{x\in\BR^d} f(x)\!\coloneqq\!\frac{1}{n} \sum_{i=1}^n\ln(1\!+\!\exp(- y_i \langle x, z_i \rangle))\!+\!\frac{\lambda}{2}\!\norm{x}^2\!,
\end{split}
\label{logistic_obj}
\end{equation}
where $z_i\in\BR^d$ is the feature of the $i$-th training sample and $y_i \in \{1,-1\}$ is the corresponding labels.
We conduct our experiments on nine real-world datasets (``a9a'', ``w8a'', ``ijcnn'', ``mushrooms'', ``phishing'', ``svmguide3'', ``german.numer'', ``splice'' and `covtype'') from LIBSVM repository.  
We take $\lambda=10^{-3}$ for ``a9a'', ``mushrooms'', ``svmguide3'', ``german.numer'', ``covtype'' and $\lambda=10^{-4}$ for others.

We present the experimental results in Figure \ref{fig:general_res}.
We observe that the proposed LISR-$k$ significantly outperforms other methods on all datasets. 
The LISR-1 enjoys a faster convergence rate than IQN and SLIQN when it starts to converge, while it may be slower at the early stage. 
We conjecture that IQN and SLIQN contain the steps of classical quasi-Newton updates.
By accessing the exact gradient information, \citet{rodomanov2021new,rodomanov2021rates} theoretically showed that classical quasi-Newton methods converge faster than greedy quasi-Newton methods at the early stage.
We empirically observe similar results for incremental quasi-Newton methods, while the rigorous theory for such a phenomenon is still unclear.
On the other hand, the block update in LISR-$k$ leads to much better Hessian estimators. Hence, the early stage of LISR-$k$ only contains a few iterations.

\section{Conclusion}\label{sec:conclusion}
This paper has proposed the efficient incremental quasi-Newton method called LISR-1 and its extension named LISR-$k$ method for the finite-sum convex optimization. 
We have theoretically shown the proposed methods enjoy faster superlinear convergence rates than the state-of-the-art incremental quasi-Newton methods.
The numerical experiments on quadratic programming and regularized logistic regression also validate the advantages of the proposed methods over existing IQN baselines.

In future work, it is interesting to study incremental quasi-Newton methods for more general settings, such as minimizing nonconvex functions~\cite{wang2017stochastic,yang2021stochastic}.
It is also possible to leverage the idea to design efficient incremental quasi-Newton methods for solving minimax problems~\cite{liu2022quasi,liu2022partial} or nonlinear equations~\cite{liu2023block}.

\section*{Acknowledgments}
This research/project is supported by the National Research Foundation, Singapore under its AI Singapore Programme (AISG Award No: AISG2-PhD-2023-08-043T-J).
This research is part of the programme DesCartes and is supported by the National Research Foundation, Prime Minister’s Office, Singapore under its Campus for Research Excellence and Technological Enterprise (CREATE) programme.
Luo Luo is supported by National Natural Science Foundation of China (No. 62206058) and Shanghai Sailing Program (22YF1402900).

\bibliography{aaai24}
\appendix\onecolumn

\newpage

The appendix is organized as below,
Section \ref{appendix:key_lemmas} introduces several key lemmas which are essential for the convergence analysis of the LISR-1 and LISR-$k$.
Section \ref{appendix:main_result_proof} presents the proof of lemmas and theorems introduced in Section \ref{sec:methodology} and Section \ref{sec:block_update}.
Section \ref{appendix:efficient_impl} shows the efficient implementation of the LISR-1 and LISR-$k$ method which achieve a computation complexity of $\fO(d^2)$ at every iteration. 
Section \ref{appendix:more_exps} presents more details of the dataset. 
It also presents additional comparison results on the quadratic function minimization task and general function minimization task.

\section{Established Results} \label{appendix:key_lemmas}
In this section, we revisit some key lemmas which are essential for the analysis of the proposed algorithms. 
\begin{lemma}[\citet{rodomanov2021greedy}]
    Let $f$ be a strongly self-concordant function with some constant $M$, and let $r \coloneqq   \norm{y - x}_{\nabla^2 f(x)} $ for any $x, y \in \fR^d$. Then,
    \begin{equation}
        \frac{\nabla^2 f(x)}{1 + Mr} \preceq \nabla^2 f(y) \preceq (1 + M r) \nabla^2 f(x).
        \label{matrix_approx_1}
    \end{equation}
    Also, for $J \coloneqq \int_{0}^1 \nabla^2 f(x + t(y - x))dt$, we have
    \begin{equation}
        \frac{\nabla^2 f(x)}{1 + \frac{M r}{2}} \preceq J \preceq \left(1 + \frac{M r}{2}\right) \nabla^2 f(x),
    \end{equation}
    \begin{equation}
        \frac{\nabla^2 f(y)}{1 + \frac{Mr}{2}} \preceq J \preceq \left(1 + \frac{M r}{2}\right)\nabla^2 f(y).
    \end{equation}
    \label{lemma:matrix_approx}
\end{lemma}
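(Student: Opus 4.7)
The plan is to prove the three double inequalities sequentially, each by a direct application of the strong self-concordance hypothesis $\nabla^2 f(y) - \nabla^2 f(x) \preceq M \norm{y-x}_{\nabla^2 f(z)}\, \nabla^2 f(w)$ with carefully chosen anchors $z$ and $w$. The key observation is that the constants $1+Mr$ and $1+Mr/2$ appear cleanly because we can keep the Dikin-type norm anchored at a single Hessian while letting the other anchor vary; this avoids having to relate $\norm{y-x}_{\nabla^2 f(x)}$ and $\norm{x-y}_{\nabla^2 f(y)}$, which would cost an extra factor.

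For inequality \eqref{matrix_approx_1}, I would first set $z = w = x$ in the strong self-concordance definition to obtain $\nabla^2 f(y) - \nabla^2 f(x) \preceq Mr\, \nabla^2 f(x)$, which rearranges to the upper bound $\nabla^2 f(y) \preceq (1+Mr)\nabla^2 f(x)$. For the matching lower bound, I would invoke the definition with the roles of $x$ and $y$ swapped but with the norm still anchored at $\nabla^2 f(x)$, giving $\nabla^2 f(x) - \nabla^2 f(y) \preceq M \norm{x-y}_{\nabla^2 f(x)}\, \nabla^2 f(y) = Mr\, \nabla^2 f(y)$; rearranging yields $\nabla^2 f(x) \preceq (1+Mr)\nabla^2 f(y)$, equivalent to $\nabla^2 f(y) \succeq \nabla^2 f(x)/(1+Mr)$.

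For the bounds on $J$, I would parametrize the segment by $x_t = x + t(y-x)$ for $t \in [0,1]$ and apply the first claim pointwise. Since $\norm{x_t - x}_{\nabla^2 f(x)} = tr$, the first claim gives $\nabla^2 f(x)/(1+Mtr) \preceq \nabla^2 f(x_t) \preceq (1+Mtr)\nabla^2 f(x)$; integrating the upper bound over $t \in [0,1]$ immediately produces $J \preceq (1+Mr/2)\nabla^2 f(x)$. For the matching lower bound, integration gives $J \succeq \bigl(\int_0^1 (1+Mtr)^{-1}\,dt\bigr)\nabla^2 f(x) = \frac{\ln(1+Mr)}{Mr}\nabla^2 f(x)$, and I would close the gap via the elementary inequality $\ln(1+u) \geq u/(1+u/2)$ for $u \geq 0$, which follows from a derivative comparison that reduces to $u^2 \geq 0$.

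The bounds anchored at $\nabla^2 f(y)$ follow by an analogous argument, with $\norm{x_t - y}_{\nabla^2 f(x)} = (1-t)r$ replacing $\norm{x_t - x}_{\nabla^2 f(x)} = tr$: strong self-concordance with $z = x$ and $w \in \{y, x_t\}$ then yields $\nabla^2 f(y)/(1+M(1-t)r) \preceq \nabla^2 f(x_t) \preceq (1+M(1-t)r)\nabla^2 f(y)$, and integrating together with the same logarithmic inequality delivers the stated sandwich. The main obstacle is nothing deep but rather careful bookkeeping: the four-argument form of the self-concordance inequality affords substantial flexibility, and the right choice of $z$ and $w$ is what makes the scalar bounds integrate cleanly to the stated constants rather than to sub-optimal ones such as $e^{Mr}$.
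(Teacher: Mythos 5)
Your proof is correct, but note that the paper itself offers no proof of this lemma: it is imported verbatim from \citet{rodomanov2021greedy} as an established result, so there is nothing in the paper to compare against line by line. Every step of your argument checks out against the four-argument form of strong self-concordance stated in the paper: the choices $z=w=x$ and (with roles swapped) $z=x$, $w=y$ give the two halves of \eqref{matrix_approx_1}; the pointwise sandwich $\nabla^2 f(x)/(1+Mtr) \preceq \nabla^2 f(x_t) \preceq (1+Mtr)\nabla^2 f(x)$ integrates to the claimed upper bound; and your lower bound via $\int_0^1 (1+Mtr)^{-1}dt = \ln(1+Mr)/(Mr)$ closes correctly with $\ln(1+u) \ge 2u/(2+u)$, whose derivative comparison indeed reduces to $u^2 \ge 0$. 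You were also appropriately careful in the third sandwich to invoke the definition directly with the norm anchored at the third point $x$ (via $z=x$) rather than trying to cite the first claim, which would not apply there. For comparison, the proof in the cited source reaches the constant $1/(1+Mr/2)$ without any logarithm by a different use of the four-argument freedom: for the lower bound it writes $\nabla^2 f(x) - \nabla^2 f(x_t) \preceq Mtr\,\nabla^2 f(x_s)$ for every $s$, averages over $s\in[0,1]$ to place $J$ itself on the right-hand side, and integrates in $t$ to get $\nabla^2 f(x) - J \preceq \tfrac{Mr}{2}J$ directly. Your route costs one extra elementary inequality but is equally rigorous and arguably more transparent.
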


\begin{lemma}[Banach's Lemma]
    Let $A \in \BR^{d \times d}$ be a matrix such that its norm satisfies $\norm{A} < 1$. Then the matrix $(I + A)$ is invertible and 
    \begin{equation*}
        \frac{1}{1 + \norm{A}} < \norm{(I + A)^{-1}} < \frac{1}{1 - \norm{A}}. 
    \end{equation*}
    \label{lemma:banach}
\end{lemma}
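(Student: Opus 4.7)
The plan is to prove Banach's Lemma via the classical Neumann series argument, which gives invertibility and the upper norm bound simultaneously, and then to derive the lower bound from a submultiplicativity trick applied to the identity $(I+A)(I+A)^{-1} = I$.

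First, I would establish invertibility of $I+A$. Consider the partial sums $S_N \coloneqq \sum_{k=0}^{N} (-A)^k$. Using submultiplicativity of the spectral norm, $\norm{(-A)^k} \leq \norm{A}^k$, and since $\norm{A} < 1$, the series $\sum_{k=0}^{\infty} \norm{A}^k$ converges as a geometric series to $1/(1-\norm{A})$. This implies $\{S_N\}$ is Cauchy in the Banach space $(\BR^{d\times d}, \norm{\cdot})$ and hence converges to some matrix $S \coloneqq \sum_{k=0}^{\infty}(-A)^k$. A telescoping computation gives $(I+A)S_N = S_N(I+A) = I - (-A)^{N+1}$, and letting $N\to\infty$ (noting $\norm{(-A)^{N+1}} \leq \norm{A}^{N+1} \to 0$) yields $(I+A)S = S(I+A) = I$. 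Therefore $(I+A)^{-1}$ exists and equals $S$.

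Second, for the upper bound I would use the triangle inequality on the series expression:
\begin{equation*}
\norm{(I+A)^{-1}} = \left\|\sum_{k=0}^{\infty}(-A)^k\right\| \leq \sum_{k=0}^{\infty}\norm{A}^k = \frac{1}{1-\norm{A}}.
\end{equation*}
For the lower bound, I would apply submultiplicativity to the identity $I = (I+A)(I+A)^{-1}$:
\begin{equation*}
1 = \norm{I} \leq \norm{I+A}\cdot\norm{(I+A)^{-1}} \leq (1+\norm{A})\cdot\norm{(I+A)^{-1}},
\end{equation*}
where the second inequality uses $\norm{I+A} \leq \norm{I} + \norm{A} = 1 + \norm{A}$. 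Rearranging gives $\norm{(I+A)^{-1}} \geq 1/(1+\norm{A})$.

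There is essentially no hard step here; this is a textbook result and the only subtlety is being careful that the inequalities in the statement are strict. The strictness of the upper bound follows because the series and the geometric bound agree only when all higher-order terms vanish, which fails generically (one could alternatively view the statement as a non-strict inequality with the strict version holding whenever $A \neq 0$; for $A = 0$ both sides equal $1$). The strictness of the lower bound similarly holds except in degenerate cases. I would remark on this but not dwell on it, since the main content of the lemma is the two-sided geometric-style bound on $\norm{(I+A)^{-1}}$ used elsewhere in the convergence analysis.
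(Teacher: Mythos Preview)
Your proof via the Neumann series is the standard textbook argument and is correct. The paper does not actually supply a proof of this lemma; it is listed in the appendix under ``Established Results'' as a known fact used in the analysis, so there is nothing to compare against. Your remark about the strictness of the inequalities is apt: as written the strict versions fail at $A=0$, and the paper only ever uses the non-strict upper bound $\norm{(I+A)^{-1}} \leq 1/(1-\norm{A})$ in the proof of Lemma~\ref{lemma:core_recur}.
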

\begin{lemma}[\citet{ye2021explicit}]
    Consider positive definite matrices $A, G \in \BR^{d \times d}$ and suppose $G_{+} \coloneqq SR1(G, A, u)$, where $u \neq 0$. Suppose for some $\eta > 1$, we have
    \begin{equation*}
        A \preceq G \preceq \eta A,
    \end{equation*}
    then, for any $u \neq 0$, we also have:
    \begin{equation*}
        A \preceq G_{+} \preceq G \preceq \eta A.
    \end{equation*}
    \label{lemma:sr1_one_step}
\end{lemma}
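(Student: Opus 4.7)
The plan is to verify the three inequalities $A \preceq G_+$, $G_+ \preceq G$, and $G \preceq \eta A$ separately; the last one is simply the hypothesis, so only the first two need work. Throughout, set $H \coloneqq G - A$, which is positive semidefinite because $G \succeq A$. The degenerate case $Hu = 0$ (equivalently $Gu = Au$) is handled by the convention $G_+ = G$ in the SR1 definition~(\ref{sr1_def}), trivially yielding $A \preceq G_+ = G \preceq \eta A$; so I may assume $Hu \neq 0$, which together with $H \succeq 0$ forces the scalar $u^\top H u > 0$ since $H^{1/2} u \neq 0$.

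For the upper bound $G_+ \preceq G$, I would rewrite the SR1 update as
\begin{equation*}
G - G_+ \;=\; \frac{(Hu)(Hu)^{\top}}{u^\top H u},
\end{equation*}
which is a strictly positive scalar times the rank-one outer product $w w^\top$ with $w = Hu$, hence positive semidefinite. For the lower bound $A \preceq G_+$, I subtract $A$ from both sides of the SR1 formula to get
\begin{equation*}
G_+ - A \;=\; H \;-\; \frac{H u u^\top H}{u^\top H u},
\end{equation*}
and then factor through $H^{1/2}$. Introducing $v \coloneqq H^{1/2} u$ (note $v \neq 0$) yields
\begin{equation*}
G_+ - A \;=\; H^{1/2}\!\left(I - \frac{v v^\top}{v^\top v}\right)\! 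H^{1/2}.
\end{equation*}
The middle factor is the orthogonal projector onto $\spn(v)^{\perp}$, which is positive semidefinite; and conjugation by $H^{1/2}$ preserves that property, giving $G_+ \succeq A$.

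Chaining the two inequalities together with the assumed $G \preceq \eta A$ delivers the full chain $A \preceq G_+ \preceq G \preceq \eta A$. There is no genuine obstacle here, as the argument is a direct algebraic verification; the one point that deserves attention is recognizing the projector structure of $I - v v^\top / \|v\|^2$, since it is precisely this geometric fact that makes the lower bound go through without any extra quantitative assumption beyond $G \succeq A$. The assumption $\eta > 1$ and the explicit value of $\eta$ in fact play no role in the SR1 monotonicity itself — they are only needed so that the enclosing inequality $G \preceq \eta A$ is available to propagate to $G_+$.
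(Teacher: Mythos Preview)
Your proof is correct. The paper does not supply its own proof of this lemma; it is listed under ``Established Results'' in the appendix and simply attributed to \citet{ye2021explicit}, so there is no in-paper argument to compare against. Your direct verification via $H = G - A \succeq 0$, the rank-one PSD outer product for $G - G_+$, and the orthogonal-projector factorization $G_+ - A = H^{1/2}\bigl(I - vv^\top/\|v\|^2\bigr)H^{1/2}$ is the standard route and fully rigorous.
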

\begin{lemma}[\citet{lin2022explicit}]
    Consider positive definite matrices $A, G \in \BR^{d \times d}$ such that $A \preceq G$. Suppose that $G_{+} \coloneqq \textrm{SR1}(G, A, \Tilde{u}(G, A))$, where $\Tilde{u}(G, A)$  (\eqref{sr_greedy_vec}) is the greedy vector of $G$ with respect to $A$. Then, the following holds:
    \begin{equation*}
        \tau(G_{+}, A) \leq \left(1 - \frac{1}{d}\right) \tau(G, A).
    \end{equation*}
    \label{lemma:sr1_one_step_2}
\end{lemma}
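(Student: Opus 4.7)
The plan exploits the explicit rank-one structure of the SR1 update together with the greedy selection rule. Writing $u=\Tilde{u}(G,A)=e_{i^*}$ with $i^*=\argmax_{i\in[d]} e_i^\top(G-A)e_i$, formula~(\ref{sr1_def}) yields
\begin{equation*}
G_+ - A \;=\; (G-A) \;-\; \frac{(G-A)e_{i^*}e_{i^*}^\top(G-A)}{e_{i^*}^\top(G-A)e_{i^*}},
\end{equation*}
assuming $Gu\neq Au$ (the degenerate case will be handled at the end). Taking the trace of this identity immediately gives
\begin{equation*}
\tau(G_+,A) \;=\; \tau(G,A) \;-\; \frac{\lVert(G-A)e_{i^*}\rVert^2}{e_{i^*}^\top(G-A)e_{i^*}},
\end{equation*}
where $\tau(G,A)=\mathrm{tr}(G-A)$ is the trace potential under which the greedy SR1 contraction is naturally phrased.

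The next step is to lower-bound the subtracted ratio by $\frac{1}{d}\tau(G,A)$, which I would do in two elementary pieces. First, since $G-A\succeq 0$ every diagonal entry is nonnegative, and the column-norm inequality $\lVert(G-A)e_{i^*}\rVert^2=\sum_{j=1}^d (G-A)_{ji^*}^2\geq (G-A)_{i^*i^*}^2$ produces
\begin{equation*}
\frac{\lVert(G-A)e_{i^*}\rVert^2}{e_{i^*}^\top(G-A)e_{i^*}} \;\geq\; (G-A)_{i^*i^*}.
\end{equation*}
Second, the greedy choice of $i^*$ as the maximizer of $e_i^\top(G-A)e_i$ over $i\in[d]$ gives the pigeonhole-type bound $(G-A)_{i^*i^*}\geq \frac{1}{d}\sum_{i=1}^d (G-A)_{ii} = \frac{1}{d}\,\mathrm{tr}(G-A) = \frac{1}{d}\tau(G,A)$. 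Chaining the two inequalities yields $\tau(G_+,A)\leq \left(1-\frac{1}{d}\right)\tau(G,A)$.

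To close the argument I would dispose of the degenerate case: if $e_{i^*}^\top(G-A)e_{i^*}=0$, then by the maximality of $i^*$ we have $(G-A)_{ii}=0$ for every $i$, so $\mathrm{tr}(G-A)=0$. Combined with $G-A\succeq 0$ (a positive semidefinite matrix with zero trace vanishes), this forces $G=A$; in that case the SR1 update itself sets $G_+=G=A$, so $\tau(G_+,A)=0$ and the inequality is trivially satisfied. The only real obstacle in this plan is bookkeeping around the convention for $\tau$: the calculation above works cleanly for $\tau(G,A)=\mathrm{tr}(G-A)$ together with the greedy rule~(\ref{sr_greedy_vec}), which is exactly the pairing used for greedy SR1 in \citet{lin2022explicit}. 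Once that alignment is confirmed, the proof collapses to the column-norm inequality plus a pigeonhole step.
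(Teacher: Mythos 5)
Your proof is correct and complete: the trace identity for the rank\Hyphdash*1 correction, the column-norm bound $\lVert (G-A)e_{i^*}\rVert^2 \geq (G-A)_{i^*i^*}^2$, and the pigeonhole step on the greedily chosen diagonal entry together give exactly the $(1-d^{-1})$ contraction, and your handling of the degenerate case correctly observes that for $G-A \succeq 0$ a vanishing maximal diagonal entry forces $G=A$, which is precisely the $Gu=Au$ branch of the SR1 definition. The paper itself does not prove this lemma; it imports it from \citet{lin2022explicit}, and your argument is the standard one from that line of work (your column-norm inequality is the coordinate-vector specialization of the Cauchy--Schwarz step used there), so there is nothing to reconcile.
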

\begin{lemma}[\citet{liu2023symmetric}]
    For any positive-definite matrices $A \in \BR^{d \times d}$ and $G \in \BR^{d \times d}$ with $A \preceq G \preceq \eta A$ for some $\eta \geq 1$, we let $G_{+} = \mathrm{SR}\textrm{-}k(G, A, U)$ for some full rank matrix $U \in \BR^{d\times k}$.
    Then it holds that
    \begin{equation}
        A \preceq G_{+} \preceq \eta A.
    \end{equation}
    \label{lemma:srk_update_consist}
\end{lemma}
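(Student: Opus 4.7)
The plan is to split into the two branches of Definition~\ref{dfn:srk}. In the degenerate branch $GU = AU$, the update sets $G_{+} = G$ and the sandwich $A \preceq G_{+} \preceq \eta A$ is inherited verbatim from the hypothesis. The substantive case is when $GU \neq AU$, where $G_{+} = G - P$ with
\begin{equation*}
P \coloneqq (G-A) U \bigl(U^{\top} (G-A) U\bigr)^{\dag} U^{\top} (G-A),
\end{equation*}
and my goal will be to prove the operator sandwich $0 \preceq P \preceq G - A$, from which both claimed inequalities follow by trivial arithmetic on $G_{+} = G - P$.

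The key structural observation I would exploit is that $P$ is a conjugate of an orthogonal projection. I would set $M \coloneqq G - A \succeq 0$ and factor $M = C^{\top} C$ via $C \coloneqq M^{1/2}$, then introduce $V \coloneqq C U$. The identities $M U = C^{\top} V$, $U^{\top} M = V^{\top} C$, and $U^{\top} M U = V^{\top} V$ then rewrite
\begin{equation*}
P = C^{\top} V (V^{\top} V)^{\dag} V^{\top} C = C^{\top} \Pi_{V} C,
\end{equation*}
where $\Pi_{V} \coloneqq V (V^{\top} V)^{\dag} V^{\top}$ is the orthogonal projector onto $\mathrm{range}(V)$. Since $0 \preceq \Pi_{V} \preceq I$, conjugation by $C$ will yield $0 \preceq P \preceq C^{\top} C = M = G - A$, which is exactly the sandwich I need.

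Chaining the bounds then finishes the argument: the inequality $P \preceq G - A$ gives $G_{+} = G - P \succeq G - (G - A) = A$, while $P \succeq 0$ combined with the hypothesis gives $G_{+} \preceq G \preceq \eta A$. The main (mild) technical point I will need to address is that $U$ may have columns lying in the null space of $M$, so $V$ can be rank-deficient; this is precisely the reason Definition~\ref{dfn:srk} employs the Moore--Penrose pseudo-inverse. With that convention, the identity $V (V^{\top} V)^{\dag} V^{\top} = \Pi_{\mathrm{range}(V)}$ holds without any rank assumption, so my argument will be uniform across all admissible $U$ and no separate rank-deficiency case analysis will be required.
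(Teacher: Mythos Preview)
Your proposal is correct: the reduction to $P = C^{\top}\Pi_V C$ via $C = (G-A)^{1/2}$ cleanly yields the sandwich $0 \preceq P \preceq G-A$, and the two claimed inequalities follow immediately. The paper itself does not prove this lemma; it is listed among the ``Established Results'' and attributed to \citet{liu2023symmetric} without argument, so there is no in-paper proof to compare against. Your self-contained derivation is a valid substitute and handles the rank-deficient case uniformly through the Moore--Penrose pseudo-inverse, exactly as Definition~\ref{dfn:srk} anticipates.
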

\begin{lemma}[\citet{liu2023symmetric}]
    Suppose that 
    \begin{equation*}
        G_+ = \mathrm{SR}\textrm{-}k(G, A, U),
    \end{equation*}
    where $G \succeq A \in \BR^{d \times d}$ and select $U \in \BR^{d \times k}$ by \eqref{greedy_matrix}. Then we have
    \begin{equation}
        \tau(G_{+}, A) \leq \left( 1 - \frac{k}{d}\right) \tau(G, A).
    \end{equation}
    \label{lemma:srk_one_step}
\end{lemma}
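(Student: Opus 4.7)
The plan is to adapt the single-direction contraction underlying Lemma~\ref{lemma:sr1_one_step_2} to the block setting by exploiting the fact that the greedy selector $U = E_k(G-A)$ from \eqref{greedy_matrix} has orthonormal columns and picks out the $k$ coordinates with the largest diagonal entries of $G-A$. Writing $H \coloneqq G - A \succeq 0$, Definition~\ref{dfn:srk} gives
\begin{equation*}
G_+ - A = H - HU(U^\top HU)^{\dagger} U^\top H,
\end{equation*}
so, with $\tau(G,A)$ being the standard trace-type quantity proportional to $\mathrm{tr}(G - A)$ (which is the natural invariant for the unweighted greedy rule used here), the desired contraction reduces to showing $\mathrm{tr}\bigl(HU(U^\top HU)^{\dagger} U^\top H\bigr) \geq (k/d)\,\mathrm{tr}(H)$.

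First, I would apply the cyclic invariance of the trace to rewrite the left-hand side as $\mathrm{tr}\bigl((U^\top HU)^{\dagger}\, U^\top H^2 U\bigr)$. Since $U$ is formed from distinct standard basis vectors, $UU^\top$ is an orthogonal projection, so $I - UU^\top \succeq 0$. Inserting $I = UU^\top + (I - UU^\top)$ between the two factors of $H$ in $U^\top H^2 U$ and using symmetry of $H$ yields
\begin{equation*}
U^\top H^2 U = (U^\top H U)^2 + U^\top H (I - UU^\top) H U \;\succeq\; (U^\top H U)^2.
\end{equation*}
Monotonicity of the trace against a fixed positive semi-definite matrix then gives
\begin{equation*}
\mathrm{tr}\bigl((U^\top H U)^{\dagger}\, U^\top H^2 U\bigr) \geq \mathrm{tr}\bigl((U^\top H U)^{\dagger} (U^\top H U)^2\bigr) = \mathrm{tr}(U^\top H U).
\end{equation*}

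The final step is to observe that $\mathrm{tr}(U^\top H U) = \sum_{a=1}^{k} H_{i_a, i_a}$ is the sum of the $k$ largest diagonal entries of $H$, by the greedy choice of indices $i_1, \dots, i_k$ from \eqref{largest_k_def} and \eqref{greedy_matrix}. Because $H \succeq 0$ has non-negative diagonal, the sum of the top $k$ of $d$ non-negative numbers is at least a $k/d$ fraction of their total, so $\mathrm{tr}(U^\top H U) \geq (k/d)\,\mathrm{tr}(H)$. Chaining the three displayed inequalities yields $\tau(G_+, A) \leq (1 - k/d)\,\tau(G, A)$ as claimed.

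The main obstacle I anticipate is the rank-deficient case where $U^\top H U$ is singular: then the SR-$k$ convention in Definition~\ref{dfn:srk} sets $G_+ = G$ on the null space, and one must verify that the PSD inequality $U^\top H^2 U \succeq (U^\top H U)^2$ still translates into the desired trace inequality through the Moore-Penrose pseudo-inverse. This can be handled by restricting all computations to the range of $U^\top H U$ (where both sides are supported) or by perturbing $H$ to $H + \varepsilon I$ and letting $\varepsilon \to 0^+$; once that bookkeeping is dealt with, the contraction follows cleanly from the PSD ordering, with no condition-number dependence entering the argument.
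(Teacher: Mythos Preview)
The paper itself supplies no proof of this lemma; it is quoted verbatim as an established result from the cited reference in the appendix section on established results, so there is no in-paper argument to compare against.

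Your argument is correct and is essentially the natural proof. Writing $H = G - A \succeq 0$, the SR-$k$ formula gives $\tau(G_+, A) = \mathrm{tr}(H) - \mathrm{tr}\bigl(HU(U^\top HU)^{\dagger} U^\top H\bigr)$; you then lower-bound the subtracted term by $\mathrm{tr}(U^\top HU)$ via the PSD comparison $U^\top H^2 U \succeq (U^\top HU)^2$ and trace monotonicity, and finish with the greedy averaging inequality $\mathrm{tr}(U^\top HU) \geq (k/d)\,\mathrm{tr}(H)$. Each step is sound.

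Your concern about the singular case is already resolved by your own computation: for any symmetric PSD matrix $M$, the identity $M^{\dagger}M^2 = M$ holds exactly (diagonalize and check eigenvalue by eigenvalue), so $\mathrm{tr}\bigl((U^\top HU)^{\dagger}(U^\top HU)^2\bigr) = \mathrm{tr}(U^\top HU)$ needs no perturbation. In the remaining degenerate case $GU = AU$ where Definition~\ref{dfn:srk} sets $G_+ = G$, the greedy selection of the $k$ largest diagonal entries of $H$ forces all diagonal entries of $H$ to vanish, hence $H = 0$ and the bound is trivial.
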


\section{Convergence Analysis of the LISR-1/LISR-$k$ method} \label{appendix:main_result_proof}
In this section, we present the theoretical analysis of the Theorem \ref{thm:glins_res} and Theorem \ref{thm:glins+_res}.
To facilitate the analysis of main theorems, we introduce the following quantity given two PSD matrices $G$ and $A$ such that $G \succeq A$:
\begin{equation*}
    \tau(G, A) = \rm tr(G - A).
\end{equation*}
\subsection{Supporting Lemmas}
The norm of the difference between two PSD matrices can be upper bounded as below:
\begin{lemma}
    For all positive definite matrices $A, G \in \BR^{d \times d}$, if $A \preceq G$ and $A \preceq L I$, then
    \begin{equation*}
        \norm{G - A} \leq \frac{d L \tau(G, A)}{\mathrm{tr}(A)} = \mu \nu(G, A).
    \end{equation*}
    \label{lemma:norm_bound}
\end{lemma}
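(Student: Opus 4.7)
The plan is to exploit two elementary facts: that $G-A$ is positive semidefinite under the hypothesis $A \preceq G$, and that for a PSD matrix the spectral norm is always bounded above by the trace. The upper bound $A \preceq L I$ enters only to ensure that the factor $dL/\mathrm{tr}(A)$ on the right-hand side is at least one, which makes the trace bound compatible with the target inequality.

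First, I would observe that $G - A \succeq 0$, so its spectral norm coincides with its largest eigenvalue. Since the trace of a PSD matrix equals the sum of its (non-negative) eigenvalues, we have the pointwise bound
\begin{equation*}
\norm{G - A} \;=\; \lambda_{\max}(G - A) \;\leq\; \sum_{i=1}^d \lambda_i(G-A) \;=\; \mathrm{tr}(G-A) \;=\; \tau(G, A).
\end{equation*}
Next, I would use the assumption $A \preceq L I$ to infer that every eigenvalue of $A$ is at most $L$, and hence $\mathrm{tr}(A) \leq d L$. Rearranging gives $dL/\mathrm{tr}(A) \geq 1$, so multiplying the previous display by this factor only weakens it:
\begin{equation*}
\norm{G - A} \;\leq\; \tau(G, A) \;\leq\; \frac{d L\,\tau(G, A)}{\mathrm{tr}(A)}.
\end{equation*}

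To finish, I would verify the claimed identity with $\nu(G,A)$ by direct substitution: by definition $\nu(G,A) = d \varkappa \, \tau(G,A)/\mathrm{tr}(A)$ with $\varkappa = L/\mu$, so $\mu \nu(G,A) = dL\,\tau(G,A)/\mathrm{tr}(A)$, matching the right-hand side. There is no serious obstacle here; the lemma is essentially the observation that for a PSD matrix the spectral norm is dominated by the trace, combined with a bookkeeping step that renormalizes by $\mathrm{tr}(A)$ using the uniform upper bound $A \preceq L I$. The only thing to double-check while writing out the argument is that the hypothesis used is indeed $A \preceq L I$ (a bound on $A$, not on $G$), which is exactly what makes $\mathrm{tr}(A) \leq dL$ and therefore legitimizes inflating $\tau(G,A)$ by the factor $dL/\mathrm{tr}(A) \geq 1$.
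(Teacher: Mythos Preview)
Your proposal is correct and follows essentially the same argument as the paper: bound $\norm{G-A}$ by its trace $\tau(G,A)$ using positive semidefiniteness, then use $\mathrm{tr}(A)\leq dL$ to inflate by the factor $dL/\mathrm{tr}(A)\geq 1$. You additionally spell out the identity $\mu\nu(G,A)=dL\,\tau(G,A)/\mathrm{tr}(A)$, which the paper leaves implicit.
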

\begin{proof}
    For any PSD matrix $X$, let $\lambda_{max}(X)$ denote the $i$-th largest eigenvalue of $X$, we have
    \begin{align*}
        \norm{G - A} = \lambda_1(G- A) \leq \sum_{i=1}^d \lambda_i(G - A) = tr(G - A) = \tau(G, A).
    \end{align*}
    Furthermore, we can bound $\mathrm{tr}(A)$ by:
    \begin{equation*}
        \mathrm{tr}(A) \leq d L.
    \end{equation*}
    Therefore, we have:
    \begin{equation*}
        \frac{d L \tau(G, A)}{\mathrm{tr}(A)} \geq \frac{d L \norm{G - A}}{d L} = \norm{G - A}.
    \end{equation*}
\end{proof}
Now we show how the Hessian approximation metric $\nu(B, \nabla^2 f(x))$ changes after applying one step of greedy SR1 update:
\begin{lemma}
    Let $f: \BR^d \to \BR$ be a real-valued function that is $\mu$-strongly convex, $L$-smooth, and $M$-strongly self-concordant. Let $x, x_{+} \in \BR^d \setminus \{0\}$ and $B$ be a matrix such that $B \geq \nabla^2 f(x)$. Define the constant $r \coloneqq \norm{x_{+} - x}_{\nabla^2 f(x)}$ and the matrix $P \coloneqq \left(1 + \frac{M r}{2}\right)^2 B$. Consider the following SR-$k$ updates:
    \begin{equation*}
        B_{+} \coloneqq \mathrm{SR}\textrm{-}k(P, \nabla^2 f(x_{+}), \Bar{U}(P, \nabla^2 f(x_{+}))).
    \end{equation*}
    Here the vector $\Bar{U}(P, \nabla^2 f(x_{+}))$ is the greedy matrx defined in \eqref{greedy_matrix}. Then $B_{+} \succeq \nabla^2 f(x_{+})$ and
    \begin{equation*}
        \frac{\tau(B_{+}, \nabla^2 f(x_{+}))}{\mathrm{tr}(\nabla^2 f(x_{+}))} \leq \left(1 - \frac{k}{d}\right)\left(1 + \frac{Mr}{2}\right)^4 \left(\frac{\tau (B, \nabla^2 f(x))}{\mathrm{tr}(\nabla^2 f(x))} + 2 M r  \right).
    \end{equation*}
    \label{lemma:one_step_approx_srk}
\end{lemma}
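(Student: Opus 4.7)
The plan is to first verify that the SR-$k$ update producing $B_+$ is well-defined (i.e., $P \succeq \nabla^2 f(x_+)$), then invoke the established block SR-$k$ results (Lemmas~\ref{lemma:srk_update_consist} and~\ref{lemma:srk_one_step}) to extract both $B_+ \succeq \nabla^2 f(x_+)$ and a one-step contraction by the factor $(1 - k/d)$ on $\tau(\cdot, \nabla^2 f(x_+))$, and finally translate the resulting bound on $\tau(P, \nabla^2 f(x_+))$ into one involving $\tau(B, \nabla^2 f(x))/\mathrm{tr}(\nabla^2 f(x))$ by means of the self\Hyphdash*concordance inequalities of Lemma~\ref{lemma:matrix_approx}.

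For well-posedness, since $(1 + Mr/2)^2 = 1 + Mr + (Mr)^2/4 \geq 1 + Mr$, combining $B \succeq \nabla^2 f(x)$ with $\nabla^2 f(x_+) \preceq (1 + Mr)\,\nabla^2 f(x)$ (from Lemma~\ref{lemma:matrix_approx}) gives
\begin{equation*}
P \;=\; (1 + Mr/2)^2\, B \;\succeq\; (1 + Mr)\, \nabla^2 f(x) \;\succeq\; \nabla^2 f(x_+),
\end{equation*}
so Definition~\ref{dfn:srk} applies, Lemma~\ref{lemma:srk_update_consist} yields $B_+ \succeq \nabla^2 f(x_+)$, and Lemma~\ref{lemma:srk_one_step} yields $\tau(B_+, \nabla^2 f(x_+)) \leq (1 - k/d)\,\tau(P, \nabla^2 f(x_+))$.

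It then suffices to bound $\tau(P, \nabla^2 f(x_+))/\mathrm{tr}(\nabla^2 f(x_+))$. Writing $\beta := (1 + Mr/2)^2$ and decomposing $P - \nabla^2 f(x_+) = \beta\,(B - \nabla^2 f(x)) + \bigl(\beta\,\nabla^2 f(x) - \nabla^2 f(x_+)\bigr)$, taking traces gives
\begin{equation*}
\tau(P, \nabla^2 f(x_+)) \;=\; \beta\,\tau(B, \nabla^2 f(x)) + \beta\,\mathrm{tr}(\nabla^2 f(x)) - \mathrm{tr}(\nabla^2 f(x_+)).
\end{equation*}
I would then use $\mathrm{tr}(\nabla^2 f(x)) \leq (1+Mr)\,\mathrm{tr}(\nabla^2 f(x_+))$ (again from Lemma~\ref{lemma:matrix_approx}) to bound the trailing difference by $(\beta(1+Mr) - 1)\,\mathrm{tr}(\nabla^2 f(x_+))$, and the same inequality to replace $1/\mathrm{tr}(\nabla^2 f(x_+))$ by $(1+Mr)/\mathrm{tr}(\nabla^2 f(x))$ in the first term. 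Dividing through by $\mathrm{tr}(\nabla^2 f(x_+))$ produces
\begin{equation*}
\frac{\tau(P, \nabla^2 f(x_+))}{\mathrm{tr}(\nabla^2 f(x_+))} \;\leq\; \beta(1+Mr)\,\frac{\tau(B, \nabla^2 f(x))}{\mathrm{tr}(\nabla^2 f(x))} \;+\; \bigl(\beta(1+Mr) - 1\bigr).
\end{equation*}

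The remaining step is purely algebraic in $u := Mr \geq 0$: both $\beta(1+Mr) \leq \beta^2 = (1+Mr/2)^4$ (immediate from $(1+u/2)^2 \geq 1+u$) and $\beta(1+Mr) - 1 \leq 2Mr\cdot \beta^2$ can be verified by polynomial expansion and coefficientwise comparison (the left side is $2u + \tfrac{5}{4}u^2 + \tfrac{1}{4}u^3$, the right side expands to $2u + 4u^2 + 3u^3 + u^4 + \tfrac{1}{8}u^5$). Combining these yields exactly the bracketed expression $(1+Mr/2)^4\bigl(\tau(B,\nabla^2 f(x))/\mathrm{tr}(\nabla^2 f(x)) + 2Mr\bigr)$, and multiplying by the $(1-k/d)$ contraction gives the claim. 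The principal obstacle is reconciling the multiplicative $\beta^2$ factor with the additive $2Mr$ term: the natural decomposition leaves a residual constant $\beta(1+Mr) - 1$ that must be absorbed into $2Mr\beta^2$, and it is the specific scaling choice $P = (1+Mr/2)^2 B$ (rather than $(1+Mr)B$) that provides exactly the slack needed to close this gap.
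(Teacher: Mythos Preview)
Your proposal is correct and follows essentially the same approach as the paper: establish $P \succeq \nabla^2 f(x_+)$ via Lemma~\ref{lemma:matrix_approx}, invoke Lemmas~\ref{lemma:srk_update_consist} and~\ref{lemma:srk_one_step} for the $(1-k/d)$ contraction, and then use self-concordance to convert $\tau(P,\nabla^2 f(x_+))/\mathrm{tr}(\nabla^2 f(x_+))$ into the claimed bound. The only cosmetic differences are that the paper routes the well-posedness through the averaged Hessian $K$ and first replaces $-\mathrm{tr}(\nabla^2 f(x_+))$ by $-\mathrm{tr}(\nabla^2 f(x))/(1+Mr)$ (arriving at the constant $Mr(2+Mr/4)$ before absorbing it into $2Mr(1+Mr/2)^2$), whereas you keep an exact trace identity and verify $\beta(1+Mr)-1 \leq 2Mr\,\beta^2$ directly; both algebraic routes close the same gap.
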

\begin{proof}
    Since $B \succeq \nabla^2 f(x)$, we have:
    \begin{align*}
        P =& (1 + \frac{M r}{2})^2 B \succeq (1 + \frac{M r}{2})^2 \nabla^2 f(x) \succeq (1 + \frac{Mr}{2}) K \succeq \nabla^2 f(x_+),
    \end{align*}
    where the matrix $K \coloneqq \int_{\tau = 0}^1 f(x + \tau (x_{+} - x)) d \tau$. 
    The first inequality follows from the problem assumption and the last two inequalities are due to Lemma \ref{lemma:matrix_approx}. 
   
    
    Apply Lemma \ref{lemma:srk_one_step}, and we have:
    \begin{align*}
        \tau(B_{+}, \nabla^2 f(x_{+})) \preceq & \left(1 - \frac{k}{d} \right) \tau(P, \nabla^2 f(x_{+})) .
    \end{align*}
    
    The last term can be bounded as 
    \begin{align*}
         & \mathrm{tr}(P - \nabla^2 f(x_{+})) \\
         \overset{(\ref{matrix_approx_1})}{\leq} & \mathrm{tr}\left(\left(1 + \frac{Mr}{2}\right)^2 B - \frac{\nabla^2 f(x)}{1 + M r}\right) \\
         = & \left(1 + \frac{Mr}{2}\right)^2 \tau (B, \nabla^2 f(x)) + \left(\left(1 + \frac{Mr}{2}\right)^2 - \frac{1}{1 + M r}\right) \mathrm{tr}(\nabla^2 f(x)) \\
         \leq & \left(1 + \frac{Mr}{2}\right)^2 \tau (B, \nabla^2 f(x)) + \left(\left(1 + \frac{Mr}{2}\right)^2 + Mr - 1\right) \mathrm{tr}(\nabla^2 f(x)) \\
         = & \left(1 + \frac{Mr}{2}\right)^2 \tau (B, \nabla^2 f(x)) +  M r \left( 2 + \frac{Mr}{4} \right) \mathrm{tr}(\nabla^2 f(x)) \\
         \overset{(\ref{matrix_approx_1})}{\leq} & \left(1 + \frac{Mr}{2}\right)^2 (1 + M r)\left(\frac{\tau (B, \nabla^2 f(x))}{\mathrm{tr}(\nabla^2 f(x))} +  2 M r  \right) \mathrm{tr}(\nabla^2 f(x_{+})), \\
    \end{align*}
    By $ 1 + Mr \leq (1 + \frac{Mr}{2})^2$, and rearrange the terms we have:
    \begin{equation*}
        \frac{\tau(B_{+}, \nabla^2 f(x_{+}))}{\mathrm{tr}(\nabla^2 f(x_{+}))} \leq c\left(1 + \frac{Mr}{2}\right)^4 \left(\frac{\tau (B, \nabla^2 f(x))}{\mathrm{tr}(\nabla^2 f(x))} + 2 M r  \right),
    \end{equation*}
    where $c\coloneqq 1 - k d^{-1}$.
\end{proof}
Applying the result of Lemma \ref{lemma:one_step_approx_srk}, we can upper bound the Hessian approximation error with the following lemma:
\begin{lemma}
    Let $f: \BR^d \to \BR$ be a real-valued function that is $\mu$-strongly convex, $L$-smooth, and $M$-strongly self-concordant. Let $\Tilde{x} \in \BR^d$ be some fixed vector and $0 \leq \gamma < 1$ be some fixed constant such that the sequence $\{ x^t \}$, for all $t \in [T]$ satisfies
    \begin{equation}
        \norm{x^t - \Tilde{x}} \leq \gamma^t \norm{x^0 - \Tilde{x}}.
        \label{eq:dist_hypo_1}
    \end{equation}
    Define the constant $r_t \coloneqq \norm{x^t - x^{t-1}}_{\nabla^2 f(x^{t-1})}$ for every $t$. Let $B^0$ be a matrix such that it satisfies $B_0 \succeq \nabla^2 f(x^0)$. Consider the following SR-$k$ update:
    \begin{align*}  
        B^t \coloneqq & \mathrm{SR}\textrm{-}k(P^{t-1}, \nabla^2 f(x^t), \Tilde{U}(P^{t-1}, \nabla^2 f(x^t))),
    \end{align*}
    where $P^{t-1} \coloneqq (1 + \frac{M r_t}{2})^2 B^{t-1}$, 
    and $\Tilde{U}(P^{t-1}, \nabla^2 f(x^t))$ is the greedy matrix (\ref{greedy_matrix}). Then the following holds for all $t \in [T]$:
    \begin{equation}
        \nu(B^t, \nabla^2 f(x^t)) \leq  \left(1 - \frac{k}{d}\right)^t e^ {\frac{4 M \sqrt{L}  \norm{x^0 - \Tilde{x}}}{1 - \gamma} } \left( \nu (B^{0}, \nabla^2 f(x^{0})) + \norm{x^0 - \Tilde{x}} \frac{4 d M L^{\frac{3}{2}} \mu^{-1} }{1 - (1 - k / d)^{-1} \gamma}  \right) .
    \end{equation}
    \label{lemma:srk_recur}
\end{lemma}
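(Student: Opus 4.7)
The plan is to induct on $t$ while tracking the scale-normalized trace error $a_t \coloneqq \tau(B^t, \nabla^2 f(x^t))/{\rm tr}(\nabla^2 f(x^t))$, which is related to the goal by $\nu(B^t, \nabla^2 f(x^t)) = d\varkappa \, a_t$. The per-step engine is Lemma \ref{lemma:one_step_approx_srk} applied with $x \to x^{t-1}$, $x_+ \to x^t$, $B \to B^{t-1}$, and $r \to r_t$; this simultaneously gives the invariant $B^t \succeq \nabla^2 f(x^t)$ (so the normalization and subsequent invocation of the lemma stay valid) and the linear recursion $a_t \leq c_t (a_{t-1} + 2Mr_t)$ with $c_t \coloneqq (1 - k/d)(1 + Mr_t/2)^4$. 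Before unrolling, I would control the step sizes: by Assumption \ref{convex_assumption} and the triangle inequality around $\tilde x$, combined with hypothesis \eqref{eq:dist_hypo_1}, one gets $r_t \leq \sqrt{L}\,\|x^t - x^{t-1}\| \leq 2\sqrt{L}\,\gamma^{t-1}\|x^0 - \tilde x\|$.

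Next I would unroll the linear recursion in closed form as
\begin{equation*}
a_t \;\leq\; \Bigl(\prod_{s=1}^{t} c_s\Bigr)\, a_0 \;+\; 2M \sum_{s=1}^{t} r_s \prod_{\ell=s}^{t} c_\ell,
\end{equation*}
and then factor $(1 - k/d)^t$ out of the products. To deal with the residual telescoping product $\prod_{s=1}^{t}(1 + Mr_s/2)^4$, I would apply $(1+x)^4 \leq e^{4x}$ and the geometric sum $\sum_{s\geq 1} r_s \leq 2\sqrt{L}\,\|x^0-\tilde x\|/(1-\gamma)$ to bound it uniformly by $\exp\bigl(4M\sqrt{L}\,\|x^0-\tilde x\|/(1-\gamma)\bigr)$; the same bound dominates every partial product $\prod_{\ell=s}^{t}(1+Mr_\ell/2)^4$.

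With the products absorbed into a single exponential factor $E \coloneqq \exp\bigl(4M\sqrt{L}\,\|x^0-\tilde x\|/(1-\gamma)\bigr)$, the residual weighted sum becomes $2M \sum_{s=1}^{t} (1-k/d)^{-(s-1)} r_s$, which I would bound by a second geometric series: using the step-size estimate above,
\begin{equation*}
2M \sum_{s=1}^{t} (1-k/d)^{-(s-1)} r_s \;\leq\; 4M\sqrt{L}\,\|x^0 - \tilde x\|\sum_{s \geq 1}\bigl((1-k/d)^{-1}\gamma\bigr)^{s-1} \;=\; \frac{4M\sqrt{L}\,\|x^0-\tilde x\|}{1 - (1-k/d)^{-1}\gamma}.
\end{equation*}
The geometric series converges precisely under the regime $\gamma < 1 - k/d$, which is the setting in which the downstream theorems choose $\rho$. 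Multiplying the resulting bound on $a_t$ by $d\varkappa = dL/\mu$ converts it into the advertised bound on $\nu(B^t, \nabla^2 f(x^t))$, producing exactly the constant $4dML^{3/2}\mu^{-1}/(1 - (1-k/d)^{-1}\gamma)$ appearing in the statement.

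The main obstacle is bookkeeping: keeping the two geometric series logically separate (one for the telescoped product $\prod (1+Mr_s/2)^4$, one for the weighted sum $\sum (1-k/d)^{-(s-1)} r_s$), carefully indexing the products after factoring out $(1-k/d)^t$, and verifying that the invariant $B^t \succeq \nabla^2 f(x^t)$ actually holds at each step (so that Lemma \ref{lemma:one_step_approx_srk} can be re-applied and $\nu$ remains well-defined). None of these steps is individually delicate, but the combination must be executed cleanly so that every constant lands in the right place.
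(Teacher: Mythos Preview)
Your proposal is correct and follows essentially the same approach as the paper: apply Lemma~\ref{lemma:one_step_approx_srk} to obtain the one-step recursion on $a_t = \tau(B^t,\nabla^2 f(x^t))/{\rm tr}(\nabla^2 f(x^t))$, bound $r_t \leq 2\sqrt{L}\gamma^{t-1}\|x^0-\tilde x\|$ via the triangle inequality, unroll the recursion, control the products $(1+Mr_s/2)^4$ by a single exponential via $1+x\leq e^x$, sum the remaining weighted geometric series, and finally multiply by $d\varkappa$ to pass from $a_t$ to $\nu$. The paper's write-up differs only cosmetically (it expands the recursion explicitly line by line rather than writing the closed unrolled form first).
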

\begin{proof}
    From Lemma \ref{lemma:one_step_approx_srk}, it can be shown that $B^t \succeq \nabla^2 f(x^t)$ for $t \in [T]$. Therefore, $\tau(B^t, \nabla^2 f(x^t))$ and $\nu(B^t, \nabla^2 f(x^t))$ are both well defined.

    First, we can apply the triangle inequality on $r_t$ to obtain:
    \begin{align*}
        r_t = \norm{x^t - x^{t-1}}_{\nabla^2 f(x^{t-1})} \leq & \sqrt{L} \norm{x^t - x^{t-1}} \\
        \leq & \sqrt{L}(\norm{x^t - \Tilde{x}} + \norm{x^{t-1} - \Tilde{x}}) \leq 2 \sqrt{L} \gamma^{t-1} \norm{x^0 - \Tilde{x}},
    \end{align*}
    Where the first inequality is due to the $L$-smoothness of function $f(\cdot)$. The last inequality follows from \eqref{eq:dist_hypo_1}. 
    Applying Lemma \ref{lemma:one_step_approx_srk} for the SR-$k$ update, we have:
    \begin{align*}
         \frac{\tau(B^{t}, \nabla^2 f(x^{t}))}{\mathrm{tr}(\nabla^2 f(x^{t}))} \leq & c\left(1 + \frac{M r_{t-1}}{2}\right)^4 \left(\frac{\tau (B^{t-1}, \nabla^2 f(x^{t-1}))}{\mathrm{tr}(\nabla^2 f(x^{t-1}))} + 2 M r_{t-1}  \right) \\
         \leq & c e^ {2 M r_{t-1}} \left(\frac{\tau (B^{t-1}, \nabla^2 f(x^{t-1}))}{\mathrm{tr}(\nabla^2 f(x^{t-1}))} + 2 M r_{t-1}  \right)\\
         \leq & c e^ {4 M \sqrt{L} \gamma^{t-1} \norm{x^0 - \Tilde{x}}} \left(\frac{\tau (B^{t-1}, \nabla^2 f(x^{t-1}))}{\mathrm{tr}(\nabla^2 f(x^{t-1}))} + 4 M \sqrt{L} \gamma^{t-1} \norm{x^0 - \Tilde{x}}  \right),
    \end{align*}
    where $c = 1 - k d^{-1}$ for the SR-$k$ update.
    The second inequality follows from $1 + x \leq e^x$. Expand the recursion, then:
    \begin{align*}
         \frac{\tau(B^{t}, \nabla^2 f(x^{t}))}{\mathrm{tr}(\nabla^2 f(x^{t}))} \leq & c e^ {4 M \sqrt{L} \gamma^{t-1} \norm{x^0 - \Tilde{x}}} \left(\frac{\tau (B^{t-1}, \nabla^2 f(x^{t-1}))}{\mathrm{tr}(\nabla^2 f(x^{t-1}))} + 4 M \sqrt{L} \gamma^{t-1} \norm{x^0 - \Tilde{x}}  \right) \\
         \leq & c^2 e^ {4 M \sqrt{L}  \norm{x^0 - \Tilde{x}} (\gamma^{t-1} + \gamma^{t-2})} \frac{\tau (B^{t-2}, \nabla^2 f(x^{t-2}))}{\mathrm{tr}(\nabla^2 f(x^{t-2}))} + \\
         & 4 M \sqrt{L}  \norm{x^0 - \Tilde{x}}\left( c \gamma^{t-1} e^ {4 M \sqrt{L} \gamma^{t-1} \norm{x^0 - \Tilde{x}}} + c^2 \gamma^{t-2} e^ {4 M \sqrt{L} (\gamma^{t-2} + \gamma^{t-1}) \norm{x^0 - \Tilde{x}}} \right)  \\
         \leq & c^t e^ {4 M \sqrt{L}  \norm{x^0 - \Tilde{x}} \sum_{j =0}^{t-1}\gamma^{j}} \frac{\tau (B^{0}, \nabla^2 f(x^{0}))}{\mathrm{tr}(\nabla^2 f(x^{0}))} + \\
         & 4 M \sqrt{L}  \norm{x^0 - \Tilde{x}}\left(\sum_{j=0}^{t-1} c^{t-j}\gamma^{j} e^ {4 M \sqrt{L}  \norm{x^0 - \Tilde{x}} \sum_{i=1}^{t-j} \gamma^{t-i}}\right)  \\
         \leq & c^t e^ {4 M \sqrt{L}  \norm{x^0 - \Tilde{x}} \sum_{j =0}^{+ \infty}\gamma^{j}} \frac{\tau (B^{0}, \nabla^2 f(x^{0}))}{\mathrm{tr}(\nabla^2 f(x^{0}))} + \\
         & 4 M \sqrt{L}  \norm{x^0 - \Tilde{x}} e^ {4 M \sqrt{L}  \norm{x^0 - \Tilde{x}} \sum_{i=1}^{+\infty} \gamma^{i}} \left(\sum_{j=0}^{t-1} c^{t-j}\gamma^{j} \right)  \\
         \leq & c^t e^ {\frac{4 M \sqrt{L}  \norm{x^0 - \Tilde{x}}}{1 - \gamma} } \left( \frac{\tau (B^{0}, \nabla^2 f(x^{0}))}{\mathrm{tr}(\nabla^2 f(x^{0}))} + \norm{x^0 - \Tilde{x}} \frac{4 M \sqrt{L}}{1 - c^{-1} \gamma}  \right) . 
    \end{align*}
    Multiply both sides of the above inequality by $d \kappa$ and we get the desired result. 
\end{proof}

\subsection{Proof of Lemma \ref{lemma:cur_iter_bound}}
For the completeness of the paper presentation, we show the proof of Lemma \ref{lemma:cur_iter_bound} below:
\begin{proof}
    For all $t \geq 0$, we define $H^t \coloneqq (\sum_{i=1}^n B_i^t)^{-1}$. From the update for $x^{t+1}$, we have:
    \begin{align*}
        \norm{x^{t+1} - x^*} =& H^t \left( \sum_{i=1}^n B_i^t z_i^t - \sum_{i=1}^n \nabla f_i(z_i^t) \right) - x^* \\
         = & H^{t} \left( \sum_{i=1}^n B_i^{t} (z_i^t - x^*) - \sum_{i=1}^n \nabla f_i(z_i^t) \right) \\
         = & H^{t} \left( \sum_{i=1}^n B_i^{t} (z_i^t - x^*) - \sum_{i=1}^n (\nabla f_i(z_i^t) - \nabla f_i(x^*))\right) \\
        = &  H^{t} \left( \sum_{i=1}^n B_i^{t} (z_i^t - x^*) - \sum_{i=1}^n \int_0^1\nabla^2 f(x^* + (z_i^t - x^*)v) (z_i^t - x^*) d v \right) \\
        = &  H^{t} \left( \sum_{i=1}^n (B_i^{t} - \nabla^2 f_i(z_i^t)) (z_i^t - x^*) + \sum_{i=1}^n (\nabla^2 f_i (z_i^t) - \int_0^1\nabla^2 f(x^* + (z_i^t - x^*)v) ) (z_i^t - x^*) d v \right),
    \end{align*}
    where the second last equality follows from the Fundamental Theorem of Calculus. Taking the norm on both sides and applying the triangle inequality, we have
    \begin{align*}
        \norm{x^{t+1} - x^*} \leq & \norm{H^{t}} \left( \sum_{i=1}^n \norm{B_i^{t} - \nabla^2 f_i(z_i^t)} \norm{z_i^t - x^*} + \sum_{i=1}^n \norm{\int_0^1 (\nabla^2 f_i (z_i^t) - \nabla^2 f(x^* + (z_i^t - x^*)v))  (z_i^t - x^*) d v } \right) \\
        \leq & \norm{H^{t}} \left( \sum_{i=1}^n \norm{B_i^{t} - \nabla^2 f_i(z_i^t)} \norm{z_i^t - x^*} + \sum_{i=1}^n \int_0^1 \norm{(\nabla^2 f_i (z_i^t) - \nabla^2 f(x^* + (z_i^t - x^*)v))  (z_i^t - x^*)} d v   \right) \\
        \leq & \norm{H^{t}} \left( \sum_{i=1}^n \norm{B_i^{t} - \nabla^2 f_i(z_i^t)} \norm{z_i^t - x^*} + \sum_{i=1}^n \int_0^1 \norm{\nabla^2 f_i (z_i^t) - \nabla^2 f(x^* + (z_i^t - x^*)v)}  \norm{z_i^t - x^*} d v   \right) \\
        \leq & \norm{H^{t}} \left( \sum_{i=1}^n \norm{B_i^{t} - \nabla^2 f_i(z_i^t)} \norm{z_i^t - x^*} + \sum_{i=1}^n \Tilde{L} \int_0^1 (1 - v) d v \norm{z_i^t - x^*}^2   \right) \\
        \leq & \norm{H^{t}} \left( \sum_{i=1}^n \norm{B_i^{t} - \nabla^2 f_i(z_i^t)} \norm{z_i^t - x^*} + \frac{\Tilde{L}}{2} \sum_{i=1}^n   \norm{z_i^t - x^*}^2   \right),
    \end{align*}
    where the second inequality follows from the result that if $g: \BR \to \BR^d$ is a continuous function, then $\norm{\int_0^1 g(v) d v} \leq \int_0^1 \norm{g(v)} d v$, and the fourth inequality follows the assumption that the Hessian of $f_i$ is $\tilde{L}$-Lipschitz.
\end{proof}

\subsection{Proof of Lemma \ref{lemma:core_recur}}\label{appendix:core_recur}
We present and prove a more generalized version of Lemma \ref{lemma:core_recur}:
\begin{lemma}
    We initialize each Hessian approximation $B_i^0 = (1 + M \sqrt{L} r_0)^2 E_i^0$ where $E_i^0$ is some PSD matrix that satisfies $E_i^0 \succeq \nabla^2 f_i(x^0)$. 
    For any $\rho$ such that $0 < \rho < 1 - \frac{k}{d}$, there exists positive constants $r_0$ and $\sigma_0$ such that if $\norm{x^0 - x^*} \leq r_0$ and $\nu(E_i^0, \nabla^2 f_i (x^0)) \leq \sigma_0$, for all $i \in [n]$, the sequence of iterates generated by the LISR-$k$ satisfies
    \begin{equation}
        \norm{x^{t+1} - x^*} \leq \rho^{\ceil{\frac{t+1}{n}}} \norm{x^0 - x^*}.
        \label{eq:sr_residual_recur_1}
    \end{equation}
    Furthermore, it holds that
    \begin{equation}
         \nu((\omega^{t+1})^{-1} B_{i_t}^{t+1}, \nabla^2 f_{i_t}(z_{i_t}^{t+1})) \leq \left(1 - \frac{k}{d} \right)^{\ceil{\frac{t+1}{n}}} \delta,
        \label{eq:sr_hessian_recur_2}
    \end{equation}
    where $\delta \coloneqq e^{\frac{4M \sqrt{L} r_0}{1 - \rho}}\big(\sigma_0 + r_0 \frac{4M d L^{\frac{3}{2}} \mu^{-1}}{1 - (1- k d^{-1})^{-1} \rho}\big)$, $M = \Tilde{L} / \mu^{\frac{3}{2}}$, $\omega^t = (1 + \alpha_{\ceil{t/n}})^2$  if $t$ is a multiple of $n$ and 1 otherwise, and the sequence $\{ \alpha_k\}$ is defined as $\alpha_k \coloneqq M \sqrt{L} r_0 \rho^k$, $\forall k \geq 0$.
    \label{lemma:srk_core_recur}
\end{lemma}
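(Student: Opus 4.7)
The plan is to induct on $t\geq 0$, simultaneously establishing \eqref{eq:sr_residual_recur_1} and \eqref{eq:sr_hessian_recur_2}, together with the auxiliary invariant $(\omega^t)^{-1}B_i^t\succeq\nabla^2 f_i(z_i^t)$ for every $i\in[n]$. The two inequalities are tightly coupled: controlling the Hessian approximation error is what enables the geometric decay of $\norm{x^{t+1}-x^*}$ in Lemma~\ref{lemma:cur_iter_bound}, and the distance bound in turn controls the displacements that drive the Hessian recursion in Lemma~\ref{lemma:srk_recur}.

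The base case uses the initial conditions directly: with $B_i^0=(1+M\sqrt{L}r_0)^2 E_i^0$, $z_i^0=x^0$, $E_i^0\succeq\nabla^2 f_i(x^0)$, and $\nu(E_i^0,\nabla^2 f_i(x^0))\leq\sigma_0$, both the invariant and the Hessian seed bound hold at $t=0$, while $\norm{x^0-x^*}\leq r_0$ seeds the distance bound.

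For the inductive step I would do three things. First, I would apply Lemma~\ref{lemma:cur_iter_bound} to control $\norm{x^{t+1}-x^*}$: the factor $\Gamma^t$ is bounded by $1/(n\mu)$ using the invariant $B_i^t\succeq\omega^t\mu I\succeq\mu I$; each mismatch $\norm{B_i^t-\nabla^2 f_i(z_i^t)}$ is controlled via Lemma~\ref{lemma:norm_bound} in terms of $\omega^t\mu\cdot\nu((\omega^t)^{-1}B_i^t,\nabla^2 f_i(z_i^t))$; and each $\norm{z_i^t-x^*}$ is bounded by the induction hypothesis on \eqref{eq:sr_residual_recur_1} applied to the previous iterate $x^{t'}$ with $t-n<t'\leq t$. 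Combining these, together with $r_0$ and $\sigma_0$ chosen small enough, closes the distance recursion with contraction factor $\rho$ per epoch. Second, for the Hessian bound I would view each component $i$ separately: the SR-$k$ update on $B_i$ fires exactly once per epoch, and between two consecutive firings the single $\omega$ scaling plays precisely the role of the pre-factor $(1+Mr/2)^2$ in Lemma~\ref{lemma:srk_recur}, so applying that lemma to the per-component subsequence yields the per-epoch contraction by $1-k/d$ in $\nu$. Third, Lemma~\ref{lemma:srk_update_consist} combined with Lemma~\ref{lemma:matrix_approx} verifies that the combined scaling and greedy SR-$k$ update preserves the invariant $(\omega^{t+1})^{-1}B_i^{t+1}\succeq\nabla^2 f_i(z_i^{t+1})$, so all quantities $\nu(\cdot,\cdot)$ stay well-defined.

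The main obstacle will be aligning the algorithm's lazy scaling (one $\omega$ per epoch) with the per-step pre-factor in Lemma~\ref{lemma:srk_recur}. For a fixed component $i$ and epoch index $k$, the displacement $r\coloneqq\norm{z_i^{(k+1)}-z_i^{(k)}}_{\nabla^2 f_i(z_i^{(k)})}$ is at most $2\sqrt{L}\rho^k r_0$ by the triangle inequality and the induction hypothesis on \eqref{eq:sr_residual_recur_1}, so $Mr/2\leq M\sqrt{L}r_0\rho^k=\alpha_k$. Hence the choice $\omega=(1+\alpha_k)^2$ is just tight enough to dominate the intra-epoch drift of $\nabla^2 f_i(z_i)$, so that Lemma~\ref{lemma:srk_recur} applies with identical constants and yields the stated $\delta$ after summing the geometric series $\sum_k\rho^k$ and $\sum_k (\rho/(1-k/d))^k$ that appear in its statement.
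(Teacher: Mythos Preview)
Your plan is essentially the same inductive argument the paper gives: simultaneous induction on $t$ proving \eqref{eq:sr_residual_recur_1} and \eqref{eq:sr_hessian_recur_2}, feeding Lemma~\ref{lemma:cur_iter_bound} for the distance and Lemma~\ref{lemma:srk_recur} for the per-component Hessian decay, with Lemmas~\ref{lemma:srk_update_consist} and~\ref{lemma:matrix_approx} preserving the positive-definiteness invariant. The identification $Mr/2\le M\sqrt{L}r_0\rho^k=\alpha_k$ that lets the single per-epoch scaling $(1+\alpha_k)^2$ dominate the $(1+Mr/2)^2$ correction factor is exactly the mechanism the paper uses.

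One genuine difference is worth flagging. You bound $\Gamma^t\le 1/(n\mu)$ directly from the invariant $B_i^t\succeq\nabla^2 f_i(z_i^t)\succeq\mu I$. The paper instead takes a longer route via Banach's Lemma (Lemma~\ref{lemma:banach}): it first shows $\frac{1}{n}\sum_i\|B_i^t-\nabla^2 f_i(z_i^t)\|\le\rho/(1+\rho)$, then deduces $\Gamma^t\le(1+\rho)/(n\mu)$. Your bound is both simpler and tighter; the Banach argument appears to be inherited from BFGS-based analyses (e.g.\ SLIQN) where $B_i^t\succeq\nabla^2 f_i(z_i^t)$ is not available, and is unnecessary here. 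A small omission on your side: when you write $\|B_i^t-\nabla^2 f_i(z_i^t)\|\le\omega^t\mu\,\nu((\omega^t)^{-1}B_i^t,\nabla^2 f_i(z_i^t))$ you are dropping the extra term $(\omega^t-1)\|\nabla^2 f_i(z_i^t)\|\le(\omega^t-1)L$ coming from the scaling; the paper tracks this term explicitly (it contributes the $ML^{3/2}\mu^{-1}r_0(2+M\sqrt{L}r_0)$ summand in its key condition on $r_0,\delta$). You will also need the paper's case split on whether index $i$ has been updated in the current epoch or only scaled from the previous one, since the invariant $\nu$-bound in \eqref{eq:sr_hessian_recur_2} is stated only for the active index and must be propagated to the stale ones.
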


\begin{proof}
    Define $c \coloneqq 1 - k d^{-1}$.

    For a given $\rho$ that satisfies $0 < \rho < c$, we choose $r_0, \delta > 0$ such that they satisfy
    \begin{equation}
          \frac{\Tilde{L} \mu^{-1} r_0 }{ 2 } +  (1 + M \sqrt{L} r_0 )^2  \delta  +  M L^{\frac{3}{2}} \mu^{-1} r_0  (2 + M \sqrt{L}  r_0 ) \leq \frac{\rho}{ 1 + \rho}.
         \label{rho_condition}
    \end{equation}
    \textbf{Base case:} At $t=1$, from Lemma \ref{lemma:cur_iter_bound}, we have
    \begin{equation*}
        \norm{x^1 - x^*} \leq \frac{\Tilde{L}\Gamma^0}{2} \sum_{i=1}^n \norm{z_i^0 - x^*}^2 + \Gamma^0 \sum_{i=1}^n \norm{B_i^0 - \nabla^2 f_i (z_i^0)} \norm{z_i^0 - x^*}.
    \end{equation*}
    For the initialization, we have that $B_i^0 = (1 + \alpha_0)^2 E_i^0$ and $z_i^0 = x^0$, for all $i \in [n]$, and $\norm{x^0 - x^*} \leq r_0$. Substituting these in the above expression, we obtain:
    \begin{align*}
        \norm{x^1 - x^*} \leq & \Gamma^0 \left(n\frac{\Tilde{L} r_0}{2} + \sum_{i=1}^n \norm{(1 + \alpha_0)^2 E_i^0 - \nabla^2 f_i (z_i^0)}\right) \norm{x^0 - x^*} \\
        \leq & \Gamma^0 \left(n\frac{\Tilde{L} r_0}{2} + (1 + \alpha_0)^2 \sum_{i=1}^n \norm{ E_i^0 - \nabla^2 f_i (z_i^0)} + \alpha_0 (\alpha_0 + 2) \sum_{i=1}^n \norm{\nabla^2 f_i(z_i^0)}\right) \norm{x^0 - x^*} \\
         \leq & \Gamma^0 \left(n\frac{\Tilde{L} r_0}{2} + (1 + \alpha_0)^2 \sum_{i=1}^n \frac{d L \tau(E_i^0, \nabla^2 f_i(x^0))}{\mathrm{tr}(\nabla^2 f_i(x^0))} + n L \alpha_0 (\alpha_0 + 2) \right) \norm{x^0 - x^*} \\
         \leq & \Gamma^0 \left(n\frac{\Tilde{L} r_0}{2} + (1 + M \sqrt{L} r_0)^2 n \mu \sigma_0 + n M L^{\frac{3}{2}} r_0 (M \sqrt{L} r_0 + 2) \right) \norm{x^0 - x^*} \\
         \leq & \Gamma^0 \left(n\frac{\Tilde{L} r_0}{2} + (1 + M \sqrt{L} r_0)^2 n \mu \delta + n M L^{\frac{3}{2}} r_0 (M \sqrt{L} r_0 + 2) \right) \norm{x^0 - x^*}.
    \end{align*} 
    The second inequality follows from the triangle inequality and the third inequality is due to Lemma \ref{lemma:norm_bound} and the initialization condition $\tau(E_i^0, \nabla^2 f_i(z_i^0)) \leq \sigma_0$.

    We now upper bound $\Gamma^0$. Define $X^0 \coloneqq \frac{1}{n} \sum_{i=1}^n B_i^0$ and $Y^0 \coloneqq \frac{1}{n} \sum_{i=1}^n \nabla^2 f_i (z_i^0)$. Then we have
    \begin{equation*}
        \frac{1}{n} \sum_{i=1}^n \norm{B_i^0 - \nabla^2 f_i (z_i^0)} \geq \norm{X^0 - Y^0} = \norm{(Y^0)((Y^0)^{-1} X^0 - I)} \geq \mu \norm{(Y^0)^{-1} X^0 - I},
    \end{equation*}
    where the second inequality follows from each $f_i$ is $\mu$-strongly convex. By tracking the steps for deriving the bound of $\norm{x^1 - x^*}$, we have
    \begin{equation*}
        \sum_{i=1}^n \norm{B_i^0 - \nabla^2 f_i (z_i^0)} \leq (1 + M \sqrt{L} r_0)^2 n \mu \delta + n M L^{\frac{3}{2}} r_0 (M \sqrt{L} r_0 + 2).
    \end{equation*}
    Combining the above two inequalities, we obtain
    \begin{equation*}
        \norm{(Y^0)^{-1} X^0 - I} \leq (1 + M \sqrt{L} r_0)^2  \delta +  \frac{M L^{\frac{3}{2}} r_0}{\mu} (M \sqrt{L} r_0 + 2) < \frac{\rho}{1 + \rho}.
    \end{equation*}
    We can now upper bound $\Gamma^0$ using Lemma \ref{lemma:banach}. Since the matrix $(Y^0)^{-1}X^0 - I \succ 0$, we can deduce that
    \begin{equation*}
        \norm{(X^0)^{-1} Y^0} = \norm{(I + ((Y^0)^{-1} X^0)- I)^{-1}} \leq \frac{1}{1 - \norm{(Y^0)^{-1} X^0 - I}} \leq 1 + \rho.
    \end{equation*}
    Recall that $\mu I \preceq Y^0$. Consequently, we can bound $\Gamma^0$ with
    \begin{equation*}
        \Gamma^0 = \frac{1}{n} \norm{(X^0)^{-1}} \leq \frac{1+\rho}{n \mu}.
    \end{equation*}
    Therefore, using \eqref{rho_condition}, we have
    \begin{equation*}
         \norm{x^1 - x^*} 
         \leq  (1 + \rho) \left(\frac{\Tilde{L} \mu^{-1} r_0}{2} + (1 + M \sqrt{L} r_0)^2  \delta +  M L^{\frac{3}{2}} \mu^{-1} r_0 (M \sqrt{L} r_0 + 2) \right) \norm{x^0 - x^*} \leq \rho \norm{x^0 - x^*}.
    \end{equation*}
    To complete the base step, we now upper bound $\nu(\omega_{1}^{-1} B_{1}^1, \nabla^2 f_{1}(z_{1}^1))$, where $\omega_1 = 1$. Applying Lemma \ref{lemma:srk_recur} with parameters as $T=1$, $\Tilde{x} = x^*$, $P^0 = (1 + \alpha_0)^2 I_1^0 = B_1^0$, we get
    \begin{align*}
          \nu(B^1, \nabla^2 f(x^1))  \leq &  c e^ {\frac{4 M \sqrt{L}  r_0}{1 - \rho} } \left( \nu (B^{0}, \nabla^2 f(x^{0})) +  \frac{4 M d \kappa \sqrt{L} r_0 }{1 - c^{-1} \rho}  \right) \\
          \leq & c e^ {\frac{4 M \sqrt{L}  r_0}{1 - \rho} } \left( \sigma^0 +  \frac{4 M d  L^{\frac{3}{2}} \mu^{-1} r_0 }{1 - c^{-1} \rho}  \right) \\
          \leq & c \delta.
    \end{align*}
    This completes the proof for the base case.
    
    \textbf{Induction Hypothesis:} Let \eqref{eq:linear_recur_1} and \eqref{eq:linear_recur_2} hold for all $t \in [jn + m - 1]$ for some $j \geq 0$ and $0 \leq m < n$.
    
    \textbf{Induction Step:} We then prove that \eqref{eq:linear_recur_1} and \eqref{eq:linear_recur_2} also hold for $t = jn + m$. Recall that the tuples are updated in a deterministic cyclic order, and at the current time $t$, we are in the $j$th cycle and have updated the $m$th tuple. Therefore, it is easy to note that $z_i^{j n + m} = z_i^{j n + i}$, for all $i \in [m]$, and $z_{i}^{j n + m} = z_i^{j n - n + i}$ for all $i \in [n] \setminus [m]$. From the induction hypothesis, we have
    \begin{equation*}
        \norm{z_i^{j n + m} - x^*} \leq 
        \begin{cases}
            \rho^{\ceil{\frac{j n + i}{n}}} \norm{x^0 - x^*}, & i\in[m] \\
            \rho^{\ceil{\frac{(j - 1) n + i}{n}}} \norm{x^0 - x^*}, & i\in[n] \setminus [m]
        \end{cases}
    \end{equation*}

    \paragraph{Upper Bound of $\sum_{i=1}^n \norm{B_i^{j n + m} - \nabla^2 f_i(z_i^{j n + m})}$.}
    We will establish an upper bound on $\sum_{i=1}^n \norm{B_i^{j n + m} - \nabla^2 f_i(z_i^{j n + m})}$. 
    Since $B_{i_t}^t$ is updated in a different manner for $t~{\rm mod}~n \neq 0$ and $t~{\rm mod}~n = 0$, we analyze these two cases separately.

    \textbf{(a)}. $t~{\rm mod}~n \neq 0$

    Since $t = j n + m + 1$, this case is equivalent to considering $0 \leq m < n - 1$. From the structure of the cyclic updates and \eqref{greedy_sr1_update-0}, we can observe that $B_i^{j n + m} = B_i^{j n + i}$, for all $i \in [m]$, $B_i^{j n + m} = (1 + M \sqrt{L} r_0 \rho^j) B_i^{j n - n + i}$, for all $i \in [n-1] \setminus [m]$, and $B_i^{jn + m} = B_i^{j n}$, for $i = n$.

    For all $i \in [m]$, from the induction hypothesis, we have
    \begin{equation*}
        \norm{B_i^{j n + m} - \nabla^2 f_i (z_i^{j n + m})} = \norm{B_i^{j n + i} - \nabla^2 f_i (z_i^{j n + i})} \leq \frac{L d \tau(B_i^{j n + i}, \nabla^2 f_i (z_i^{j n + i}))}{\mathrm{tr}(\nabla^2 f_i (z_i^{j n + i}))} \leq c^{\ceil{\frac{j n + i}{n}}}\mu \delta .
    \end{equation*}
    The first inequality follows from Lemma \ref{lemma:norm_bound}. 

    For all $i \in [n-1] \setminus [m]$, we have:
    \begin{align*}
        \norm{B_i^{j n + m} - \nabla^2 f_i (z_i^{j n + m})} = & \norm{(1 + M\sqrt{L} r_0 \rho^j)^2 B_i^{j n - n + i} - \nabla^2 f_i(x^{j n - n + i})} \\
        \leq & (1 + M \sqrt{L} r_0 \rho^j)^2 \norm{B_i^{j n - n + i} - \nabla^2 f_i (x^{j n - n + i})} \\ 
        & + M \sqrt{L} r_0 \rho^j (2 + M \sqrt{L} r_0 \rho^j) \norm{\nabla^2 f_i(x^{j n - n + i})} \\
        \leq & (1 + M \sqrt{L} r_0 \rho^j)^2 \norm{B_i^{j n - n + i} - \nabla^2 f_i (x^{j n - n + i})} + M L^{\frac{3}{2}} r_0 \rho^j (2 + M \sqrt{L} r_0 \rho^j)  \\
        \leq & (1 + M \sqrt{L} r_0 \rho^j)^2 c^j \mu \delta + M L^{\frac{3}{2}} r_0 \rho^j (2 + M \sqrt{L} r_0 \rho^j).
    \end{align*}
    The first inequality follows from the triangle inequality. The second inequality is due to the assumption that each $f_i$ is $L$-smooth. The last inequality is deduced from the induction hypothesis.
    
    For $i = n$, we have 
    \begin{align*}
        \norm{B_{i}^{j n + m} - \nabla^2 f_i (z_i^{j n + m})} = & \norm{B_i^{j n} - \nabla^2 f_i (z_i^{j n})} \\
         = & \norm{ \omega_{j n} (\omega_{j n}^{-1} B_i^{j n} - \nabla^2 f_i (z_i^{j n})) + (\omega_{j n} - 1) \nabla^2 f_i (z_i^{j n})} \\
         \leq & \omega_{j n} \norm{  \omega_{j n}^{-1} B_i^{j n} - \nabla^2 f_i (z_i^{j n})} + (\omega_{j n} - 1) \norm{ \nabla^2 f_i (z_i^{j n})} \\
         \leq & (1 + M \sqrt{L} r_0 \rho^j)^2 c^j \mu \delta + M L^{\frac{3}{2}} r_0 \rho^j (2 + M \sqrt{L} r_0 \rho^j).
    \end{align*}
    The last inequality follows from the induction hypothesis and the assumption that each $f_i$ is $L$-smooth.

    We can now bound the quantity $\sum_{i=1}^n \norm{B_i^{j n + m} - \nabla^2 f_i(z_i^{j n + m})}$ as follows:
    \begin{equation}
    \begin{split}
        \sum_{i=1}^n \norm{B_i^{j n + m} - \nabla^2 f_i(z_i^{j n + m})} \leq & m c^{\ceil{\frac{j n + i}{n}}}\mu \delta + (n - m) ((1 + M \sqrt{L} r_0 \rho^j)^2 c^j \mu \delta \\ 
        & + M L^{\frac{3}{2}} r_0 \rho^j (2 + M \sqrt{L} r_0 \rho^j)) \\
        \leq & m \mu \delta + (n - m) ((1 + M \sqrt{L} r_0 )^2 \mu \delta  + M L^{\frac{3}{2}} r_0  (2 + M \sqrt{L} r_0 )) \\
        \leq & n (1 + M \sqrt{L} r_0 )^2 \mu \delta + n  M L^{\frac{3}{2}} r_0  (2 + M \sqrt{L} r_0 ).
    \end{split}
    \label{eq:matrix_diff_1}
    \end{equation}
    \textbf{(b)}. $t~{\rm mod}~n = 0$

    Since $t = j n + m + 1$, we can infer that $m = n -1$. Here we have $B_{i}^{j n} = (1 + M \sqrt{L} r_0 \rho^j)^2 B_i^{j n - n + i}$, for all $i \in [n-1]$, and $B_n^{jn}$ would be used as it is.

    Similar to the case $t \mod n \neq 0$, for $i \in [n - 1]$, we get
    \begin{align*}
        \norm{B_i^{j n + m} - \nabla^2 f_i(z_i^{j n + m})} \leq (1 + M \sqrt{L} r_0 \rho^j)^2 \mu \delta c^j + M L^{\frac{3}{2}} r_0 \rho^j (2 + M \sqrt{L} r_0 \rho^j), 
    \end{align*}
    For $i=n$, we have
     \begin{align*}
        \norm{B_i^{j n + m} - \nabla^2 f_i(z_i^{j n + m})} \leq (1 + M \sqrt{L} r_0 \rho^j)^2 \mu \delta c^j + M L^{\frac{3}{2}} r_0 \rho^j (2 + M \sqrt{L} r_0 \rho^j), 
    \end{align*}
    We can now bound the term $\sum_{i=1}^n \norm{B_{i}^{j n + m} - \nabla^2 f_i (z_i^{j n + m})}$ in the following manner:
    \begin{equation}
    \begin{split}
        \sum_{i=1}^n \norm{B_{i}^{j n + m} - \nabla^2 f_i (z_i^{j n + m})} \leq & n (1 + M \sqrt{L} r_0 \rho^j)^2 \mu \delta c^j + n M L^{\frac{3}{2}} r_0 \rho^j (2 + M \sqrt{L} r_0 \rho^j) \\
        \leq & n (1 + M \sqrt{L} r_0 )^2 \mu \delta  + n M L^{\frac{3}{2}} r_0  (2 + M \sqrt{L} r_0 ).
    \end{split}
    \label{eq:matrix_diff_2}
    \end{equation}
    \paragraph{Upper Bound of $\norm{z_{m+1}^{j n + m + 1} - x^ * }$} 
    We now give the bound of $\norm{z_{m+1}^{j n + m + 1} - x^ * }$ using the bound of $\sum_{i=1}^n \norm{B_i^{j n + m} - \nabla^2 f_i (z_i^{j n + m})}$:
    \begin{align*}
        \norm{z_{m+1}^{j n + m + 1} - x^*} \leq &\frac{\Tilde{L} \Gamma^{j n + m}}{2} \sum_{i=1}^n \norm{z_i^{j n + m} - x^*}^2 \\
        & + \Gamma^{j n + m} \sum_{i=1}^n \norm{B_i^{j n + m} - \nabla^2 f_i (z_i^{j n + m})} \norm{z_i^{j n + m} - x^*} \\
        \leq & \Gamma^{j n + m}\left( n \frac{\Tilde{L} r_0 }{ 2} + n (1 + M \sqrt{L} r_0 )^2 \mu \delta  + n M L^{\frac{3}{2}} r_0  (2 + M \sqrt{L} r_0 ) \right) \rho^j \norm{x^0 - x^*}.
    \end{align*}
    The first inequality is due to Lemma \ref{lemma:cur_iter_bound}. The last inequality follows from \eqref{eq:matrix_diff_1}, \eqref{eq:matrix_diff_2}, and the induction hypothesis.

    To bound $\Gamma^{j n + m}$, let $X^{j n + m} \coloneqq \frac{1}{n} \sum_{i=1}^n B_{i}^{j n + m}$ and $Y^{j n + m} \coloneqq \frac{1}{n} \sum_{i=1}^n \nabla^2 f_i (z_i^{j n + m})$. Observe that:
    \begin{equation*}
        \frac{1}{n} \sum_{i=1}^{n} \norm{B_i^{j n + m} - \nabla^2 f_i (z_i^{j n + m})} \geq \norm{X^{j n + m} - Y^{j n + m}} \geq \mu \norm{(Y^{j n + m})^{-1} X^{j n + m} - I}.
    \end{equation*}
    The last inequality follows from the assumption that each $f_i$ is $\mu$-strongly convex. Furthermore, using the \eqref{eq:matrix_diff_1} and \eqref{eq:matrix_diff_2}, we can see that:
    \begin{equation*}
         \frac{1}{n} \sum_{i=1}^{n} \norm{B_i^{j n + m} - \nabla^2 f_i (z_i^{j n + m})} \leq   (1 + M \sqrt{L} r_0 )^2 \mu \delta  +  M L^{\frac{3}{2}} r_0  (2 + M \sqrt{L} r_0 ).
    \end{equation*}
    Combining the above two inequalities, we can infer that:
    \begin{equation*}
        \norm{(Y^{j n + m})^{-1} X^{j n + m} - I} \leq  (1 + M \sqrt{L} r_0 )^2 \delta  +  M L^{\frac{3}{2}} \mu^{-1} r_0  (2 + M \sqrt{L} r_0 ) \leq \frac{\rho}{1 + \rho}.
    \end{equation*}
    We can now bound $\Gamma^{j n + m}$ using Banach's Lemma following the same way we derive for the base case. We get that 
    \begin{equation*}
        n \Gamma^{j n + m} =  \norm{(X^{j n + m})^{-1}} \leq \frac{1 + \rho}{\mu}.
    \end{equation*}
    Consequently, we have
    \begin{equation}
          \begin{split}
         \norm{z_{m+1}^{j n + m + 1} - x^*} 
        \leq & \Gamma^{j n + m}\left( n \frac{\Tilde{L} r_0 }{ 2} + n (1 + M \sqrt{L} r_0 )^2 \mu \delta  + n M L^{\frac{3}{2}} r_0  (2 + M \sqrt{L} r_0 ) \right) \rho^j \norm{x^0 - x^*} \\
        \leq & \frac{1 + \rho}{n \mu} \left( n \frac{\Tilde{L} r_0 }{ 2} + n (1 + M \sqrt{L} r_0 )^2 \mu \delta  + n M L^{\frac{3}{2}} r_0  (2 + M \sqrt{L} r_0 ) \right) \rho^j \norm{x^0 - x^*} \\
        \leq & \rho^{j + 1} \norm{x^0 - x^*}.
    \end{split}
    \label{stage_2_upper_bound}
    \end{equation}
    The last inequality follows from \eqref{rho_condition}.

    \paragraph{Upper Bound of $\nu(\omega_{j n + m + 1}^{-1} B_{m + 1}^{j n + m + 1}, \nabla^2 f_{m + 1} (z_{m+1}^{j n + m + 1}))$}
    To provide the upper bound of the given metric, our first step is  to establish that $\tau(\omega_{j n + m + 1}^{-1} B_{m + 1}^{j n + m + 1}, \nabla^2 f_{m + 1} (z_{m+1}^{j n + m + 1}))$ is well-defined, by showing that:
    \begin{equation}
        \omega_{j n + m + 1}^{-1} B_{m + 1}^{j n + m + 1} \succeq \nabla^2 f_{m + 1} (z_{m+1}^{j n + m + 1}).
        \label{eq:valid_metric}
    \end{equation}
    To establish the above relation, we need two observations. The first relation is that 
    \begin{equation*}
        \omega_{j n + m}^{-1} B_{m + 1}^{j n + m} \succeq \nabla^2 f_{m+1}(z_{m+1}^{j n + m}) \succeq \left(1 + \frac{M r_{j n + m + 1}}{2}\right)^{-1} K^{j n + m}, 
    \end{equation*}
    where the first inequality follows from the induction hypothesis and the second inequality follows from Lemma \ref{lemma:matrix_approx}.

    In addition, using the deductions from Lemma \ref{lemma:srk_recur}, we have:
    \begin{equation}
        r_{j n + m + 1} \leq \frac{2 \alpha_j}{M}.
        \label{r_bound}
    \end{equation}
    Now we consider two cases depending on $m$.

    \textbf{(a)}: $0 \leq m < n - 1$
    Since all the $B_i$ were scaled by a factor $(1 + \alpha_j)^2$ at the end of the cycle $j - 1$, i.e., at $t = j n$, we have $B_{m+1}^{j n + m} = (1 + \alpha_j)^2 B_{m+1}^{(j - 1) n + m + 1}$. Furthermore, from the induction hypothesis, we have:
    \begin{equation*}
        \omega_{((j - 1)n + m + 1)_{+}}^{-1} B_{m+1}^{((j - 1) n + m + 1)_{+}} \succeq \nabla^2 f_{m+1}(z_{m+1}^{((j - 1)n + m + 1)_{+}}).
    \end{equation*}
    where we define $(x)_{+} = \max(x, 0)$.

    Since $0 \leq m < n - 1$, we have $\omega_{(j - 1)n + m + 1} = 1$. Also, $z_{m+1}^{j n + m} = z_{m+1}^{((j - 1)n + m + 1)_{+}}$. Therefore,
    \begin{align*}
        B_{m+1}^{j n + m} = (1 + \alpha_j)^2 B_{m+1}^{((j - 1)n + m + 1)_{+}} \succeq & (1 + \alpha_j)^2 \nabla^2 f_{m+1}(z_{m+1}^{j n + m}) \\
        \succeq & (1 + \alpha_j)^2 \left(1 + \frac{M r_{j n + m + 1}}{2}\right)^{-1} K^{j n + m + 1} \\
        \succeq & (1 + \alpha_j) K^{j n + m + 1}.
    \end{align*}
    The second inequality follows from Lemma \ref{lemma:matrix_approx}. The last inequality can be inferred from \eqref{r_bound}.

    \textbf{(b)}. $m = n - 1$

    Since the current index $m + 1$ was last updated at time $t = j n$, we have $B_{m + 1}^{j n + m} = B_{m + 1}^{j n}$ and $z_{m + 1}^{j n + m} = z_{m + 1}^{j n}$. Further, the induction hypothesis yields that $\omega_{j n}^{-1} B_{m+1}^{j n} \succeq \nabla^2 f_{m+1} (z_{m + 1}^{j n})$. Also, $\omega_{j n} = (1 + \alpha_j)^2$ by definition. Therefore,
    \begin{align*}
        B_{m+1}^{j n + m} = \omega_{j n} (\omega_{j n}^{-1} B_{m + 1}^{j n}) \succeq& (1 + \alpha_j)^2 \nabla^2 f_{m+1}(z_{m+1}^{j n + m}) \\
        \succeq & (1 + \alpha_j)^2 (1 + \frac{M r_{j n + m + 1}}{2})^{-1} K^{j n + m + 1} \\
        \succeq & (1 + \alpha_j) K^{j n + m + 1}.
    \end{align*}
    To sum up, for both cases $0 \leq m < n$ and $m = n - 1$, we have established that $B_{m+1}^{j n + m} \succeq (1 + \alpha_j) K^{j n + m + 1}$.

    Applying Lemma \ref{lemma:matrix_approx} to $B_{m+1}^{j n + m} \succeq (1 + \alpha_j)K^{j n + m + 1}$, we have:
    \begin{align*}
        B_{m+1}^{j n + m} \succeq (1 + \alpha_j) K^{j n + m + 1} \succeq \left(1 + \frac{M r_{j n + m + 1}}{2} \right) K^{j n + m + 1} \succeq \nabla^2 f_{m+1}(z_{m+1}^{j n + m + 1}).
    \end{align*}
    For Lemma \ref{lemma:core_recur}, applying Lemma \ref{lemma:matrix_approx} and Lemma \ref{lemma:sr1_one_step}, we have:
    \begin{align*}
        \omega_{j n + m + 1}^{-1} B_{m + 1}^{j n + m + 1} = & \mathrm{SR1} (B_{m+1}^{j n + m}, \nabla^2 f_{m+1} (z_{m+1}^{j n + m + 1}), \Bar{u}(B_{m+1}^{j n + m}, \nabla^2 f_{m+1} (z_{m+1}^{j n + m + 1}) ) \\
        \succeq & \nabla^2 f_{m+1} (z_{m+1}^{j n + m + 1}).
    \end{align*}

    For Lemma \ref{lemma:srk_core_recur}, applying Lemma \ref{lemma:matrix_approx} and Lemma \ref{lemma:srk_update_consist}, it follows that:
    \begin{align*}
        \omega_{j n + m + 1}^{-1} B_{m + 1}^{j n + m + 1} = & \mathrm{SR}\textrm{-}k (B_{m+1}^{j n + m}, \nabla^2 f_{m+1} (z_{m+1}^{j n + m + 1}), \Bar{U}(B_{m+1}^{j n + m}, \nabla^2 f_{m+1} (z_{m+1}^{j n + m + 1}) ) \\
        \succeq & \nabla^2 f_{m+1} (z_{m+1}^{j n + m + 1}).
    \end{align*}

    Now define the sequence $\{y_k\}$, for $k = 0, \dots, j + 1$, such that $\{y_k\} = \{x^0, z_{m+1}^{m+1}, \dots, z_{m+1}^{j n + m + 1} \}$. 
    From the induction hypothesis and the upper bound of (\ref{stage_2_upper_bound}), we can infer that $\norm{y_k - x^*} \leq \rho^k \norm{x^0 - x^*}$, for all $k$. 
    Since $\{y_k\}$ comes from the application of SR1 updates (SR-$k$ updates), the sequence satisfies the conditions (\ref{eq:dist_hypo_1}) from Lemma \ref{lemma:srk_recur}, this implies that
    \begin{align*}
           \nu(\omega_{j n + m + 1}^{-1} B_{m+1}^{j n + m + 1}, \nabla^2 f_{m+1}(y_{j+1}) \leq c^{j+1} \delta.
    \end{align*}
    Since $y_{j + 1} = z_{m+1}^{j n + m + 1}$, this completes the proof.
\end{proof}

\subsection{Proof of Lemma \ref{lemma:sr1_avg}}
We present and prove a more generalized version of Lemma \ref{lemma:sr1_avg}:
\begin{lemma}
    The sequence of iterates generated by the LISR-$k$ method satisfies
    \begin{equation*}
        \norm{x^{t+1} - x^*} \leq \left(1 - \frac{k}{d}\right)^{\ceil{\frac{t+1}{n}}}\frac{1}{n}\sum_{i=1}^n \norm{x^{t+1-i} - x^*}.
    \end{equation*}
    \label{lemma:srk_avg}
\end{lemma}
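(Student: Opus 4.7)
The plan is to start from Lemma~\ref{lemma:cur_iter_bound} and repackage the componentwise estimates already available inside the proof of Lemma~\ref{lemma:srk_core_recur} into a sharper, per-index form. After factoring $\norm{z_i^t-x^*}$ out of both summands on the right-hand side of Lemma~\ref{lemma:cur_iter_bound}, the recursion reads
\begin{equation*}
\norm{x^{t+1}-x^*}\leq \Gamma^t \sum_{i=1}^n A_i^t\,\norm{z_i^t-x^*},\qquad A_i^t \coloneqq \frac{\tilde L}{2}\norm{z_i^t-x^*}+\norm{B_i^t-\nabla^2 f_i(z_i^t)},
\end{equation*}
so it suffices to establish two things: (i) the uniform bound $\Gamma^t A_i^t \leq c^{\lceil(t+1)/n\rceil}/n$ for every $i$, where $c\coloneqq 1-k/d$; and (ii) the multiset identity $\{z_i^t\}_{i=1}^n = \{x^{t+1-i}\}_{i=1}^n$, under the natural convention $x^j=x^0$ for $j\leq 0$ covering the initial phase $t<n$. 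Part (ii) is immediate from the cyclic rule $i_t = t \bmod n$ together with $z_{i_t}^{t+1}=x^{t+1}$: the $n$ most recent active indices hit every residue class exactly once, so $\sum_i\norm{z_i^t-x^*} = \sum_i\norm{x^{t+1-i}-x^*}$.

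For part (i), write $t = jn+m$ with $0\leq m < n$, so $\lceil(t+1)/n\rceil = j+1$. The proof of Lemma~\ref{lemma:srk_core_recur} already provides the two cases I need. For indices $i \in [m]$ visited in the current cycle $j+1$, it gives $\norm{z_i^t-x^*}\leq \rho^{j+1}r_0$ and, via Lemma~\ref{lemma:norm_bound}, $\norm{B_i^t-\nabla^2 f_i(z_i^t)}\leq c^{j+1}\mu\delta$. For indices $i \in [n]\setminus[m]$ last visited in cycle $j$, it gives $\norm{z_i^t-x^*}\leq \rho^j r_0$ together with the $\omega^{jn}$-rescaled estimate $\norm{B_i^t-\nabla^2 f_i(z_i^t)} \leq (1+M\sqrt{L}r_0\rho^j)^2 c^j\mu\delta + ML^{3/2}r_0\rho^j(2+M\sqrt{L}r_0\rho^j)$. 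Since $\rho < c$ by hypothesis, each $\rho^j$ factor can be replaced by $c^j$; the calibration~\eqref{rho_condition} on $r_0,\delta$ then collapses the bracketed coefficient to at most $\rho/(1+\rho)$, giving $A_i^t \leq c^j\mu\rho/(1+\rho)$ uniformly in $i$. Combining with the Banach-lemma upper bound $\Gamma^t \leq (1+\rho)/(n\mu)$ already produced inside the proof of Lemma~\ref{lemma:srk_core_recur} yields $\Gamma^t A_i^t \leq c^j\rho/n \leq c^{j+1}/n$, completing (i).

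The main obstacle lies in case~(b) above, where the Hessian-error bound simultaneously carries a $c^j$ piece (from the accumulated SR-$k$ approximation error) and a $\rho^j$ piece (introduced by the $\omega^{jn}$ rescaling at the start of the new cycle), and both must be absorbed into a single $c^{j+1}$ prefactor. This step is precisely where the strict inequality $\rho < 1-k/d$ reserved in the hypothesis of Lemma~\ref{lemma:srk_core_recur}, together with the calibration of $r_0$ and $\sigma_0$ in~\eqref{rho_condition}, becomes essential. The remaining bookkeeping—the boundary index $i=n$, the iterations with $t\bmod n=0$, and the initial phase $t<n$ under the convention above—follows the same template as in the proof of Lemma~\ref{lemma:srk_core_recur} and introduces no new difficulty.
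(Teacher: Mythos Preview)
Your proposal is correct and follows essentially the same route as the paper's proof: both start from Lemma~\ref{lemma:cur_iter_bound}, import the bounds on $\Gamma^t$, on $\norm{z_i^t-x^*}$, and on $\norm{B_i^t-\nabla^2 f_i(z_i^t)}$ from the proof of Lemma~\ref{lemma:srk_core_recur}, replace each $\rho^j$ by $c^j$ using $\rho<c=1-k/d$, invoke the calibration~\eqref{rho_condition} to collapse the bracket to $\rho/(1+\rho)$, and finally use $\rho<c$ once more to reach $c^{j+1}$. The only cosmetic difference is that you package the argument per index via the coefficients $A_i^t$ and prove the uniform bound $\Gamma^t A_i^t\leq c^{j+1}/n$, whereas the paper takes the common upper bound for all $i$ at once and then bounds the full sum; the underlying estimates are identical.
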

\begin{proof}
    Define $c \coloneqq 1 - k / d$.

    We prove the Lemma for a generic iteration $t = j n+ m + 1$, for some $j \geq 0$ and $0 \leq m < n$. We restate a few observations derived in the proof of Lemma \ref{lemma:core_recur}. First, the upper bound of $\Gamma^{j n + m} = \norm{(\sum_{i=1}^n B_i^{j n + m})^{-1}}$ is:
    \begin{equation}
        \Gamma^{j n + m} \leq \frac{1 + \rho}{n \mu}.
        \label{prev_result_1}
    \end{equation}
    Next, we recall the upper bound of $\norm{B_{i}^{j n + m} - \nabla^2 f_i (z_i^{j n + m})}$ is:
    \begin{align*}
        \norm{B_i^{j n + m} - \nabla^2 f_i(z_i^{j n + m})} \leq 
        \begin{cases}
        c^{j + 1} \mu \delta, & i \in [m]\\    
        (1 + M \sqrt{L}r_0 \rho^j)^2 c^j \mu \delta + M L^{\frac{3}{2}} r_0 \rho^j (2 + M \sqrt{L} r_0 \rho^j), & i \in [n]\setminus [m]
        \end{cases}
    \end{align*}
    Both cases can be summarized with a common upper bound:
    \begin{equation}
         \norm{B_i^{j n + m} - \nabla^2 f_i(z_i^{j n + m})} \leq (1 + M \sqrt{L}r_0 \rho^j)^2 c^j \mu \delta + M L^{\frac{3}{2}} r_0 \rho^j (2 + M \sqrt{L} r_0 \rho^j), \quad i \in [n].
        \label{prev_result_2}
    \end{equation}
    Finally, we can also establish that
    \begin{equation}
        \begin{split}
        \norm{z_i^{j n + m} - x^*} \leq & \rho^{j + 1} \norm{x^0 - x^*} \leq \rho^{j } \norm{x^0 - x^*}, \quad i \in [m] \\
        \norm{z_i^{j n + m} - x^*} \leq & \rho^{j } \norm{x^0 - x^*}, \quad i \in [n] \setminus [m]
    \end{split}
    \label{prev_result_3}
    \end{equation}

    From Lemma \ref{lemma:cur_iter_bound}, we have
    \begin{align*}
        \norm{z_{m + 1}^{j n + m + 1} - x^*} \leq &\Gamma^{j n + m} \frac{\Tilde{L}}{2} \sum_{i = 1}^{n} \norm{z_i^{j n + m} - x^*}^2 + \Gamma^{j n + m} \sum_{i=1}^n \norm{B_i^{j n + m} - \nabla^2 f_i (z_i^{j n + m})} \norm{z_i^{j n + m} - x^*} \\
        \leq & \Gamma^{j n + m} \sum_{i = 1}^{n} \left(\frac{\Tilde{L}}{2}  \norm{z_i^{j n + m} - x^*} +  \norm{B_i^{j n + m} - \nabla^2 f_i (z_i^{j n + m})}\right) \norm{z_i^{j n + m} - x^*} \\
        \leq & \frac{1 + \rho}{n \mu}  \left(\frac{\Tilde{L} \rho^j r_0}{2}   + (1 + M \sqrt{L}r_0 \rho^j)^2 c^j \mu \delta + M L^{\frac{3}{2}} r_0 \rho^j (2 + M \sqrt{L} r_0 \rho^j) \right) \sum_{i = 1}^{n} \norm{z_i^{j n + m} - x^*} \\
        \overset{\rho < 1}{\leq} & \frac{1 + \rho}{n \mu}  \left(\frac{\Tilde{L} \rho^j r_0}{2}   + (1 + M \sqrt{L}r_0)^2 c^j \mu \delta + M L^{\frac{3}{2}} r_0 \rho^j (2 + M \sqrt{L} r_0 ) \right) \sum_{i = 1}^{n} \norm{z_i^{j n + m} - x^*} \\
         \leq & c^j (1 + \rho)  \left(\frac{\Tilde{L} \mu^{-1} r_0}{2}   + (1 + M \sqrt{L}r_0)^2  \delta + M L^{\frac{3}{2}} \mu^{-1} r_0  (2 + M \sqrt{L} r_0 ) \right) \frac{1}{n}\sum_{i = 1}^{n} \norm{z_i^{j n + m} - x^*} \\
         \overset{(\ref{rho_condition})}{\leq} & c^{j} \rho (\frac{1}{n} \sum_{i=1}^n \norm{z_i^{j n + m} - x^*}) \\
         \leq & c^{j + 1} (\frac{1}{n} \sum_{i=1}^n \norm{x^{t - i} - x^*}).
    \end{align*}
    The third inequality is due to equation (\ref{prev_result_1}), (\ref{prev_result_2}) and (\ref{prev_result_3}). The fifth inequality follows from $\rho < 1 - d^{-1}$ in the condition of Lemma \ref{lemma:core_recur} for Lemma \ref{lemma:sr1_avg} and $\rho < 1 - k d^{-1}$ in the condition of Lemma \ref{lemma:srk_core_recur} for Lemma \ref{lemma:srk_avg}.
\end{proof}

\subsection{Proof of Theorem \ref{thm:glins_res} and Theorem \ref{thm:glins+_res}}
\begin{proof}
    The proof is the same as Theorem 1 by \cite{lahoti2023sharpened} with $c \coloneqq \mu d^{-1} L^{-1}$ replaced by $c \coloneqq d^{-1}$ for Theorem \ref{thm:glins_res} and $c \coloneqq k d^{-1}$ for Theorem \ref{thm:glins+_res}.
\end{proof}

\begin{algorithm}[tb]
\caption{The LISR-1 method}
\label{alg:iqn_algos_efficient}
\textbf{Input}: Initial point $x^0$, a predefined sequence $\{\alpha_k\}_{k=0}^{\infty}$, and a sequence of initial matrices $\{E_i^0\}_{i=1}^n$ s.t. $E_i^0 \succeq \nabla^2 f_i(z_i^0)$, $\forall i \in [n]$.\\
\textbf{Output}: The last iterate $x^t$.
\begin{algorithmic}[1] 
\STATE Initialize $z_i^0 = x^0$, $\forall i \in [n]$; Initialize $B_i^0 = (1 + \alpha_0)^2 E_i^0$, $\forall i \in [n]$;
\STATE Maintain running auxiliary variables $x$, $\Bar{u}$.
\STATE Let $t=0$.
\WHILE{not converged}
\STATE Current index to be updated is $i_t \gets t \mod n$;
\STATE Update $x^t$ as $x \gets (\Bar{B})^{-1} (\phi - g)$.
\STATE Compute $\omega_t$;// $\omega_t = (1 + \alpha_{\ceil{t/n}})^2$ if $t \mod n = 0$ and 1 otherwise
\STATE Update $\Bar{u}$ as $\Bar{u} \gets \argmax_{u \in \{ e_i\}_{i=1}^d} (\langle u, (1 + \alpha_{\ceil{t/n}})^2 B_{i_t} u \rangle - \langle u, \nabla^2 f_{i_t}(x^t) u\rangle)$;
\STATE Update $Q \gets  \mathrm{SR}1 ((1 + \alpha_{\ceil{t/n}})^2 B_{i_t}, \nabla^2 f_{i_t}(x^t), \Bar{u})$;
\STATE Update $B^{\textrm{old}} \gets B_{i_t}$;
\STATE Update $B_{i_t} \gets Q$;
\STATE Update $z_{i_t}^t$ as $z_{i_t}^t = x^t$.
\STATE Update $\phi$ as $\phi \gets \omega_t (\phi - B^{\textrm{old}} z_{i_t}) + B_{i_t} x^t$;
\STATE Update $g$ as $g \gets g + (\nabla f_{i_t} (x^t) - \nabla f_{i_t} (z_{i_t}))$;
\STATE Update $(\Bar{B})^{-1}$ as $(\Bar{B})^{-1} \gets \omega_t^{-1} \mathrm{Sherman}\textrm{-} \mathrm{Morrison}(\Bar{B}^{-1}, v, v, \langle \Bar{u}, v \rangle)$; // $v = ((1 + \alpha_{\ceil{t/n}})^2 B_{i_t} - \nabla^2 f_{i_t}(z_{i_t})) \Bar{u}$.
\STATE Increment the iteration counter $t$.
\ENDWHILE
\STATE \textbf{return} $x^t$.
\end{algorithmic}
\end{algorithm}

\begin{algorithm}[H]
\caption{The LISR-$k$ method}
\label{alg:glins+_efficient}
\textbf{Input}: Initial point $x^0$, a predefined sequence $\{\alpha_k\}_{k=0}^{\infty}$, and a sequence of initial matrices $\{E_i^0\}_{i=1}^n$ s.t. $E_i^0 \succeq \nabla^2 f_i(z_i^0)$, $\forall i \in [n]$.\\
\textbf{Output}: The last iterate $x^t$.
\begin{algorithmic}[1] 
\STATE Initialize $z_i^0 = x^0$, $\forall i \in [n]$; Initialize $B_i^0 = (1 + \alpha_0)^2 E_i^0$, $\forall i \in [n]$;
\STATE Maintain running auxiliary variables $x$, $\Bar{u}$.
\STATE Let $t=0$.
\WHILE{not converged}
\STATE Current index to be updated is $i_t \gets t \mod n$;
\STATE Update $x^t$ as $x \gets (\Bar{B})^{-1} (\phi - g)$.
\STATE Compute $\omega_t$;// $\omega_t = (1 + \alpha_{\ceil{t/n}})^2$ if $t \mod n = 0$ and 1 otherwise
\STATE Update $\Bar{U}$ as $\Bar{U} \gets E_k ((1 + \alpha_{\ceil{t/n}})^2 B_{i_t} - \nabla^2 f_{i_t}(x^t) )$;
\STATE Update $Q \gets  \mathrm{SR}\textrm{-}k ((1 + \alpha_{\ceil{t/n}})^2 B_{i_t}, \nabla^2 f_{i_t}(x^t), \Bar{U})$;
\STATE Update $B^{\textrm{old}} \gets B_{i_t}$;
\STATE Update $B_{i_t} \gets Q$;
\STATE Update $z_{i_t}^t$ as $z_{i_t}^t = x^t$.
\STATE Update $\phi$ as $\phi \gets \omega_t (\phi - B^{\textrm{old}} z_{i_t}) + B_{i_t} x^t$;
\STATE Update $g$ as $g \gets g + (\nabla f_{i_t} (x^t) - \nabla f_{i_t} (z_{i_t}))$;
\STATE Update $(\Bar{B})^{-1}$ as $(\Bar{B})^{-1} \gets \omega_t^{-1} \mathrm{Sherman}\textrm{-} \mathrm{Morrison}(\Bar{B}^{-1}, V,  V, \Bar{U}^{\top} V )$; // $V = ((1 + \alpha_{\ceil{t/n}})^2 B_{i_t} - \nabla^2 f_{i_t}(z_{i_t})) \Bar{U}$.
\STATE Increment the iteration counter $t$.
\ENDWHILE
\STATE \textbf{return} $x^t$.
\end{algorithmic}
\end{algorithm}

\begin{algorithm}[H]
\caption{Sherman-Morrison($A^{-1}$, U, V, W)}
\label{alg:sherman_morrison}
\textbf{Input}: $A^{-1} \in \BR^{d \times d}$, $U \in \BR^{d \times k}$, $V \in \BR^{d \times k}$, $W \in \BR^{k \times k}$.\\
\textbf{Output}: The updated $A^{-1}$.
\begin{algorithmic}[1] 
\STATE \textbf{return} $A^{-1} + A^{-1}U \left(W - U^{\top}A^{-1} V\right)^{-1} V^{\top} A^{-1}$.
\end{algorithmic}
\end{algorithm}

\section{Efficient Implementation of the LISR-1/LISR-$k$ method}\label{appendix:efficient_impl}
Recall that although Algorithm \ref{alg:iqn_algos} enjoys per-iteration complexity of $\fO(d^2)$, it applies scaling to all Hessian approximations at the iteration $t$ satisfying $t \mod n = 0$.
It results in the total FLOPs of $\fO(n d^2)$ at that iteration.
We can improve the algorithm by carrying out the scaling of $B_i$'s lazily. 
In particular, by only scaling the Hessian approximations when they are used, we can improve the per-iteration complexity of LISR-1 (LISR-$k$) to $\fO(d^2)$ for any iteration. The resulting algorithm for the LISR-1 (LISR-$k$) is specified in the Algorithm \ref{alg:iqn_algos_efficient} (Algorithm \ref{alg:glins+_efficient}). 
For Algorithm \ref{alg:iqn_algos_efficient}, it leverages the Sherman-Morrison Formula (Algorithm \ref{alg:sherman_morrison} with $k=1$). 
On the other hand, Algorithm \ref{alg:glins+_efficient} applies the general Sherman-Morrison Formula (Algorithm \ref{alg:sherman_morrison} with $k > 1$).

\section{Experiment Details}\label{appendix:more_exps}
In this section, we present more details of the experiment. 
First, we give an overview of the datasets used in the logistic regression task. 
Then we study the impact of different hyperparameters on the quadratic function minimization task.
Finally, we present the comparison result of the LISR-$k$ methods with different choices of $k$.
\subsection{Datasets}
We present more details of real-world datasets used in Table \ref{tab:dataset}. 
Specifically, each line shows the name of the dataset, the number of data instances, the dimension of each data, the regularization parameter we chose for the logistic regression objective function (\ref{logistic_obj}), and the resulting conditional number.
All datasets can be found on the LIBSVM website.
We preprocess the datasets by changing $\{0, 1\}$ labels to $\{\pm 1\}$.
\begin{table*}[t]
\centering
\caption{Details of the real-world datasets.}
\label{tab:dataset}
\begin{tabular}{ccccc}
    \toprule
    Dataset & $n$ & $d$ & $\lambda$ & $\kappa$ \\
    \midrule
    a9a  & 32,561 & 123 &  $1 \times 10^{-3}$  &  $4.71 \times 10^{7}$ \\\addlinespace
    w8a  & 49,749 & 300 &  $1 \times 10^{-4}$  &  $2.93 \times 10^{8}$ \\\addlinespace
    ijcnn1  & 49,749 & ~~22 &  $1 \times 10^{-4}$  &  $1.37 \times 10^{7}$ \\\addlinespace
    mushrooms  & ~~8,124 & 112 &  $1 \times 10^{-3}$  &  $2.08 \times 10^{7}$ \\\addlinespace
    phishing  & 11,055 & ~~68 &  $1 \times 10^{-4}$  &  $1.79 \times 10^{7}$ \\\addlinespace
    svmguide3  & ~~1,243 & ~~21 &  $1 \times 10^{-3}$  &  $6.67 \times 10^{5}$ \\\addlinespace
    german.numer  & ~~1,000 & ~~24 &  $1 \times 10^{-3}$  &  $2.11 \times 10^{6}$ \\\addlinespace
     splice  & ~~1,000 & ~~60 &  $1 \times 10^{-4}$  &  $4.34 \times 10^{6}$ \\\addlinespace
      covtype  & 50,000 & ~~54 &  $1 \times 10^{-3}$  &  $3.93 \times 10^{7}$ \\\addlinespace
    \bottomrule
\end{tabular}
\end{table*}
\subsection{Effect of different hyperparameters on the Quadratic Function Minimization Task}
\begin{figure*}[!tb]
\centering
\begin{tabular}{ccc}
     \includegraphics[scale=0.45]{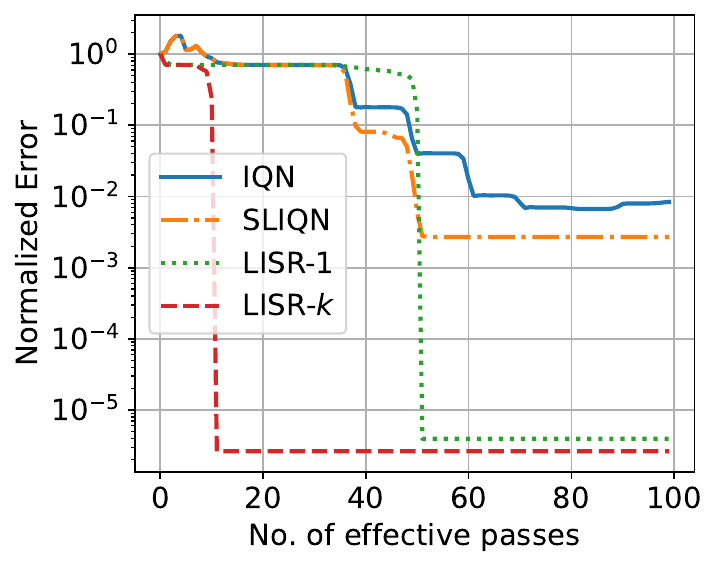}
     & \includegraphics[scale=0.45]{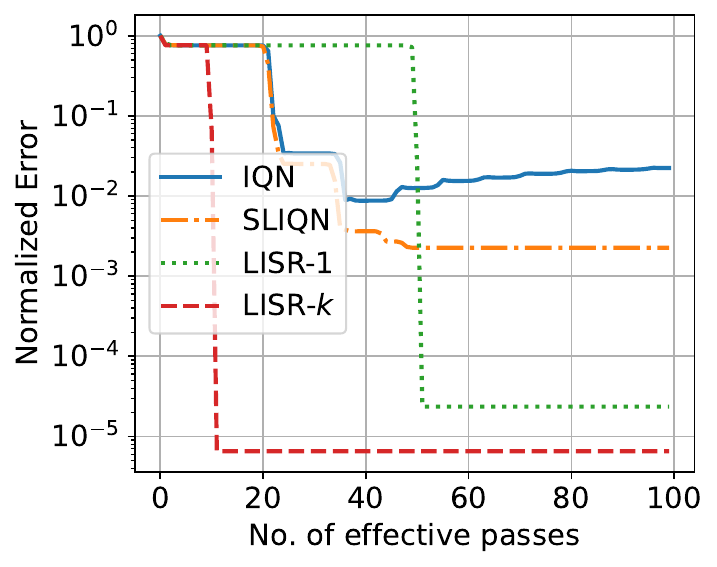}
     & \includegraphics[scale=0.45]{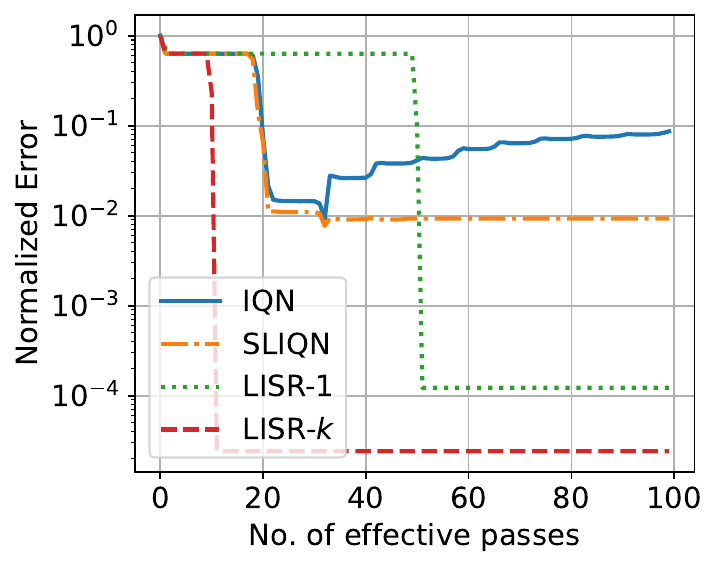} \\
     (a) $n=10$, $d=100$ & (b) $n=100$, $d=100$ &  (c) $n=1000$, $d=100$ \\
     \includegraphics[scale=0.45]{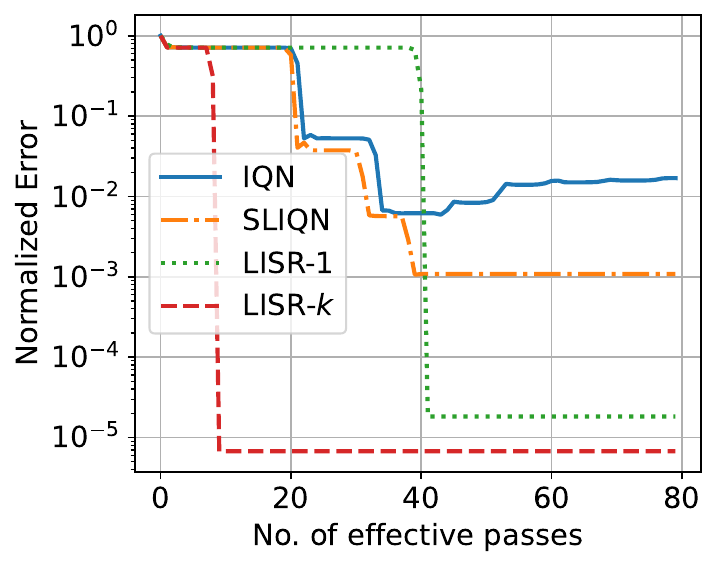}
     & \includegraphics[scale=0.45]{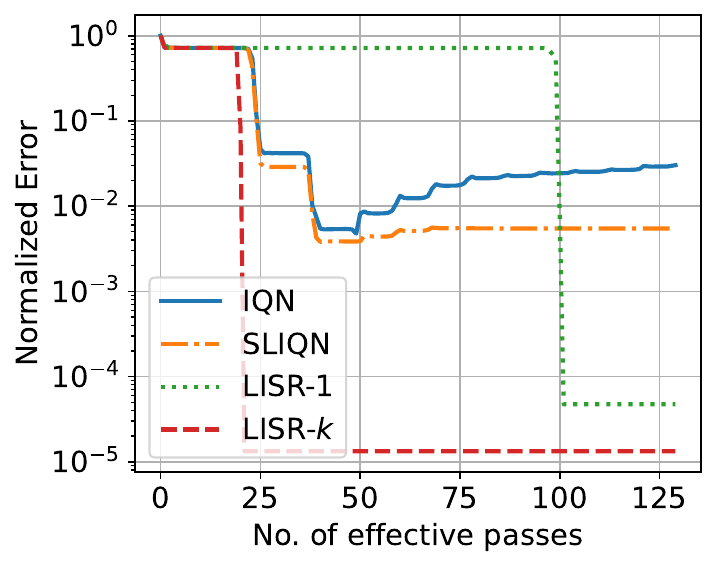}
     & \includegraphics[scale=0.45]{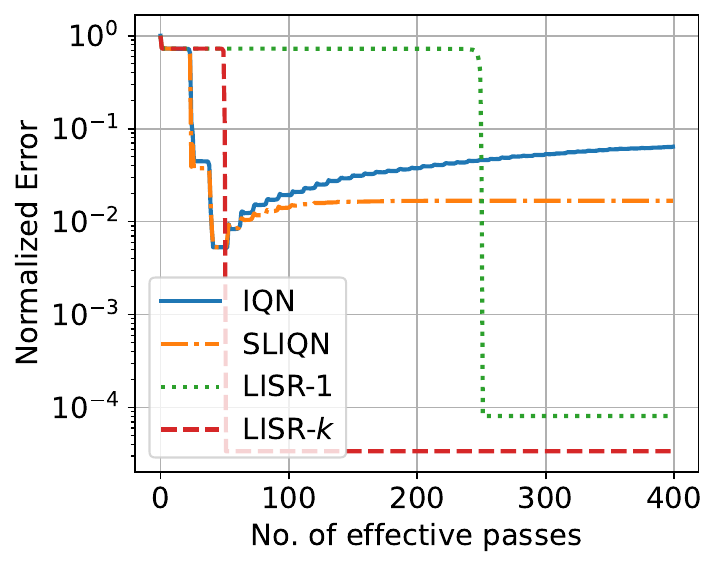} \\   
     (e) $n=100$, $d=80$ & (f) $n=100$, $d=200$ &  (g) $n=100$, $d=500$ \\
\end{tabular}
\caption{Comparison of the proposed methods with baselines for the quadratic function minimization problem. }
\label{fig:quadratic_compare_res}
\end{figure*}

We present the comparison result of our proposed method and baseline methods on the quadratic function minimization task in Figure \ref{fig:quadratic_compare_res}.
It can be observed that tuning different choices of hyperparameters $n$ and $d$ does not have much impact on the superlinear convergence of the LISR-1 and LISR-$k$ methods.
However, the time that superlinear convergence kicks in is correlated with the dimension of the underlying problem. 
Interestingly, the normalized error of the IQN method starts to diverge after reaching certain accuracy.
\subsection{Effect of $k$ on the Logistic Regression Task}
\begin{figure*}[!tb]
\centering
\begin{tabular}{ccc}
     \includegraphics[scale=0.45]{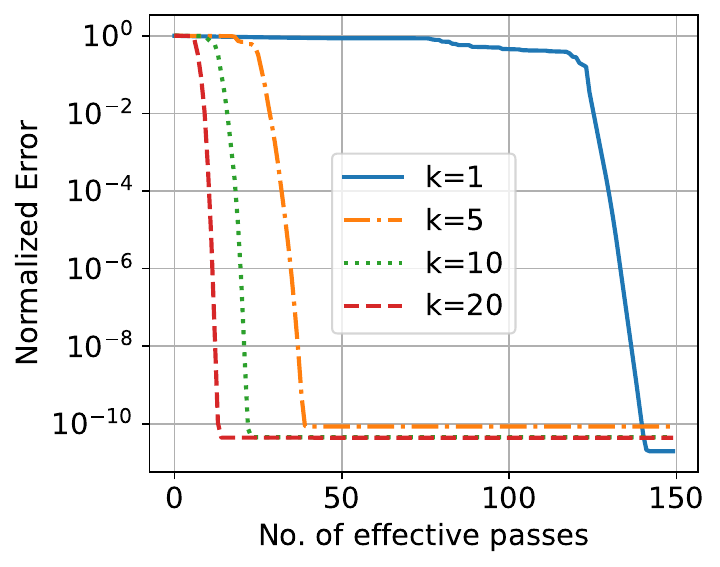}
     & \includegraphics[scale=0.45]{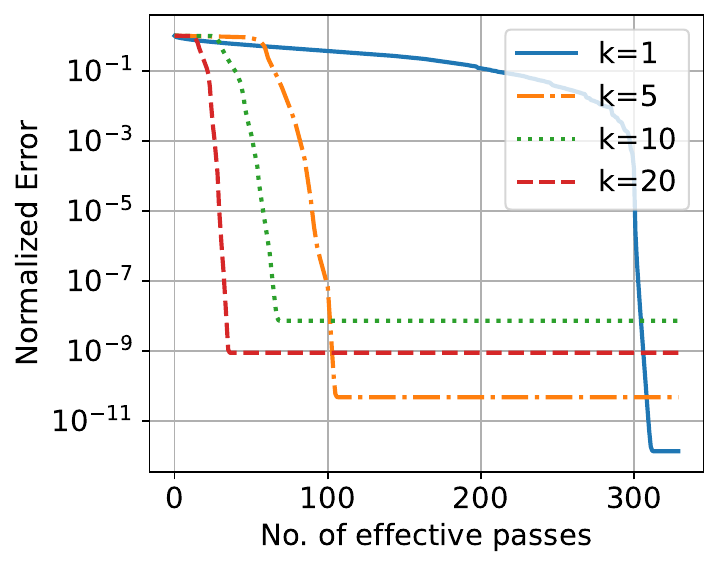}
     & \includegraphics[scale=0.45]{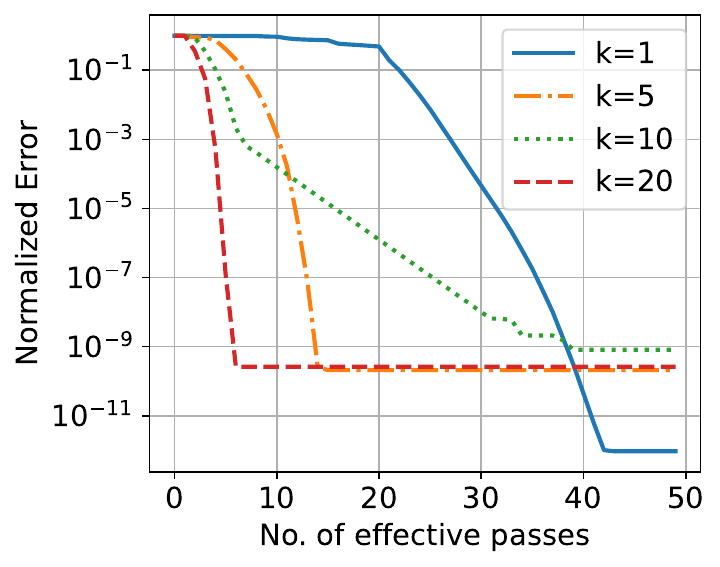} \\
     (a) a9a & (b) w8a &  (c) ijcnn1 \\
     \includegraphics[scale=0.45]{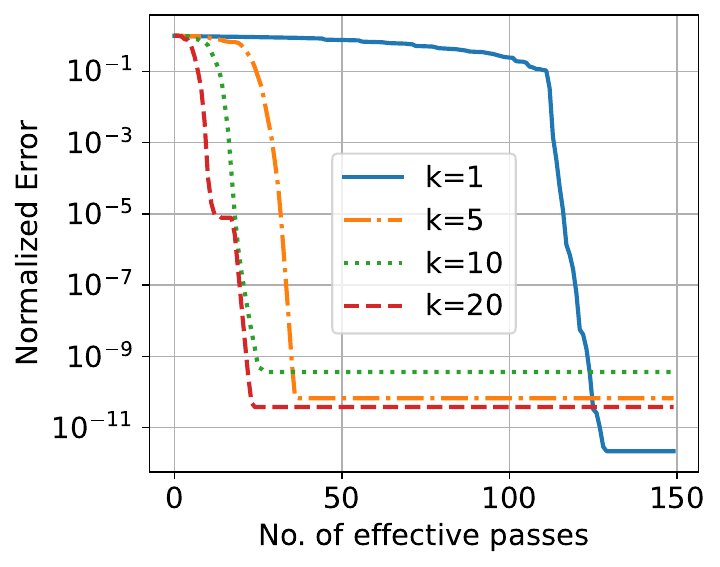}
     & \includegraphics[scale=0.45]{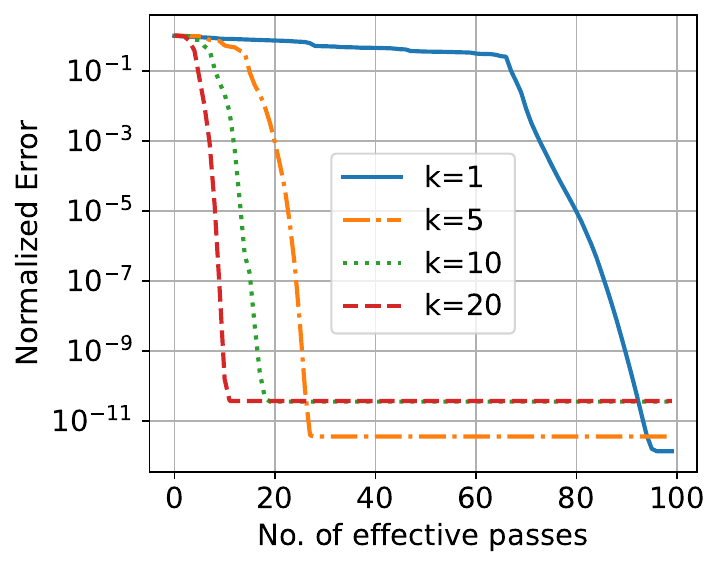}
     & \includegraphics[scale=0.45]{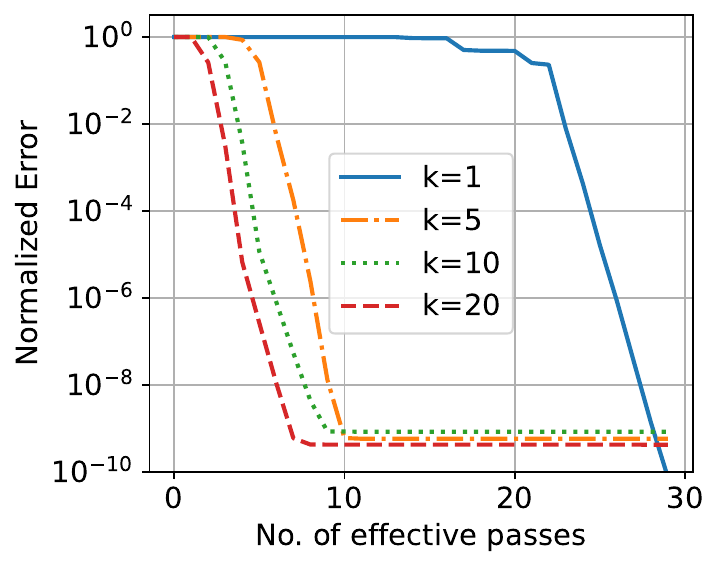} \\   
     (e) mushrooms & (f) phishing &  (g) svmguide3 \\
     \includegraphics[scale=0.45]{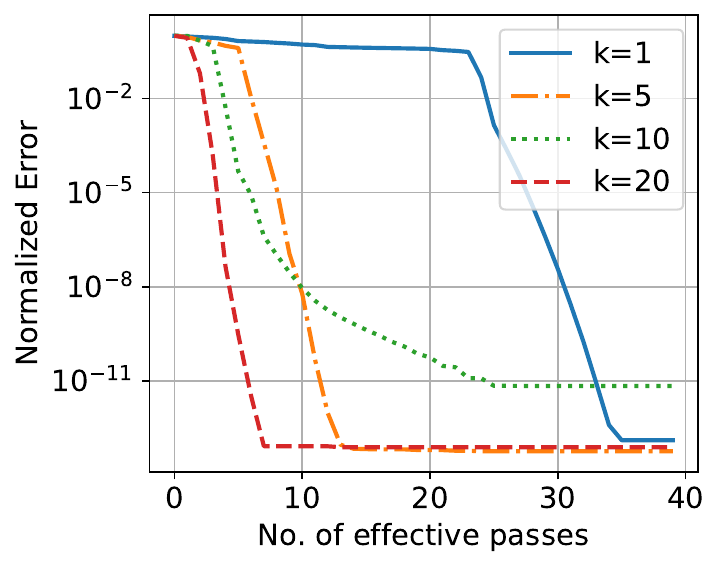}
     & \includegraphics[scale=0.45]{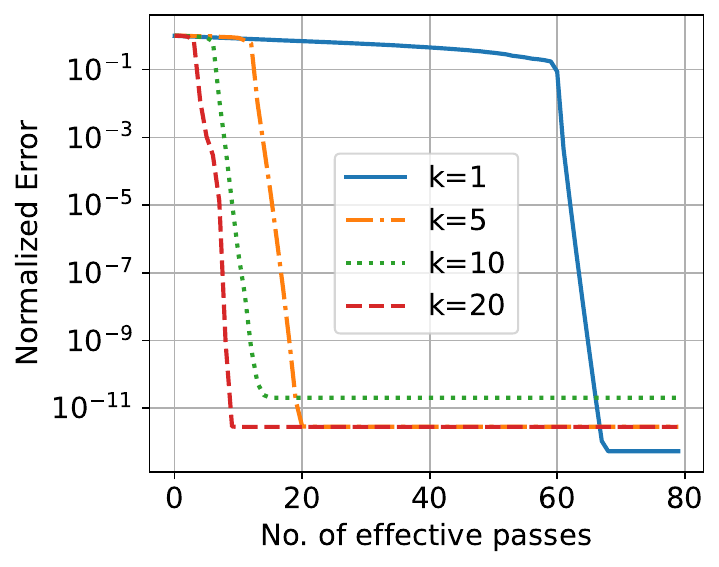}
     & \includegraphics[scale=0.45]{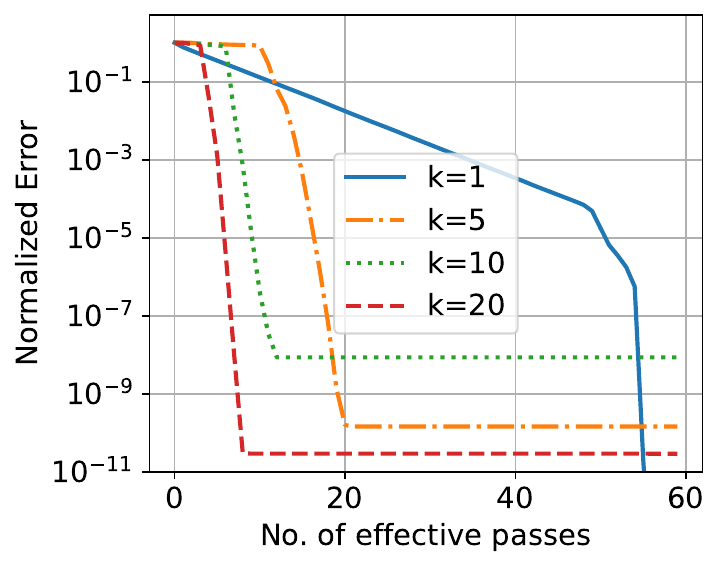}    \\
     (h) german.numer & (i) splice &  (j) covtype \\
\end{tabular}
\caption{Comparison of the LISR-$k$ method with different choices of $k$ for the general function minimization. }
\label{fig:general_compare_res}
\end{figure*}

\begin{figure*}[!tb]
\centering
\begin{tabular}{ccc}
     \includegraphics[scale=0.45]{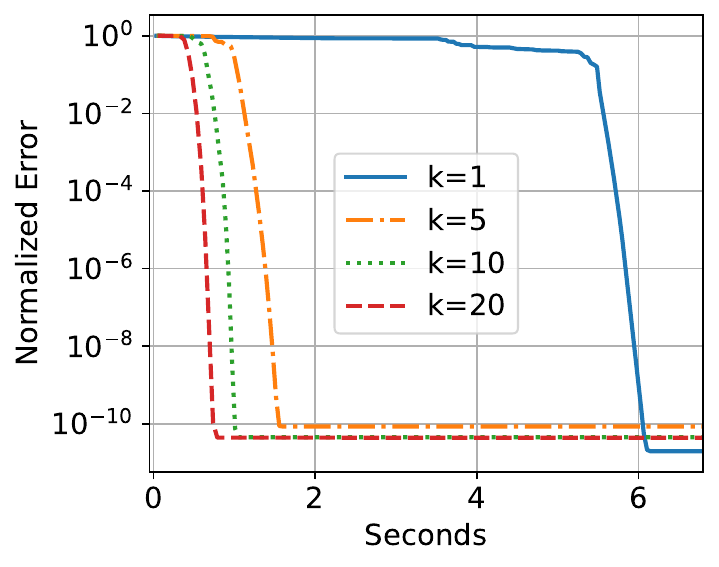}
     & \includegraphics[scale=0.45]{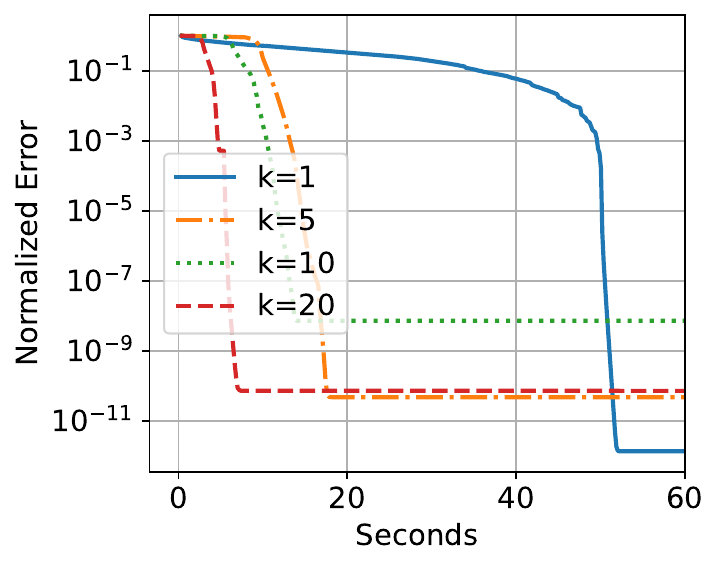}
     & \includegraphics[scale=0.45]{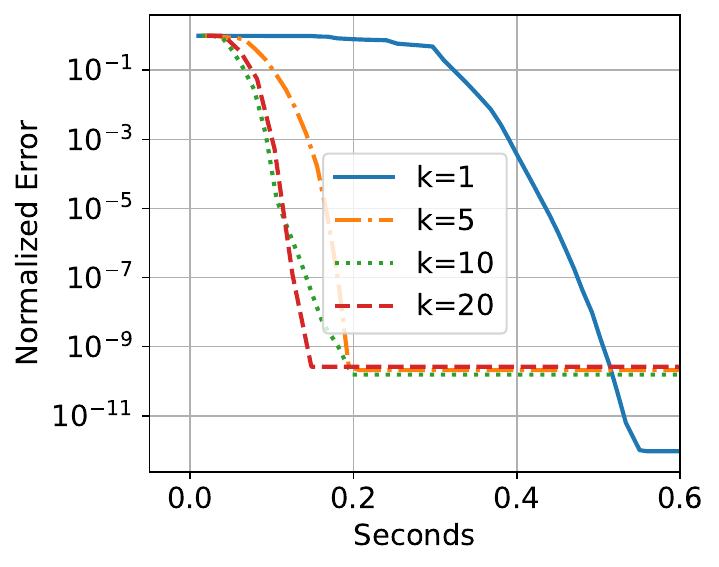} \\
     (a) a9a & (b) w8a &  (c) ijcnn1 \\
     \includegraphics[scale=0.45]{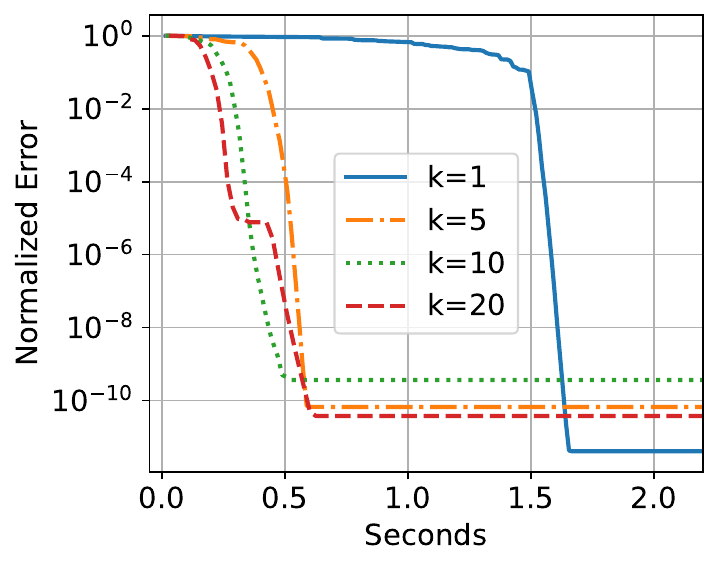}
     & \includegraphics[scale=0.45]{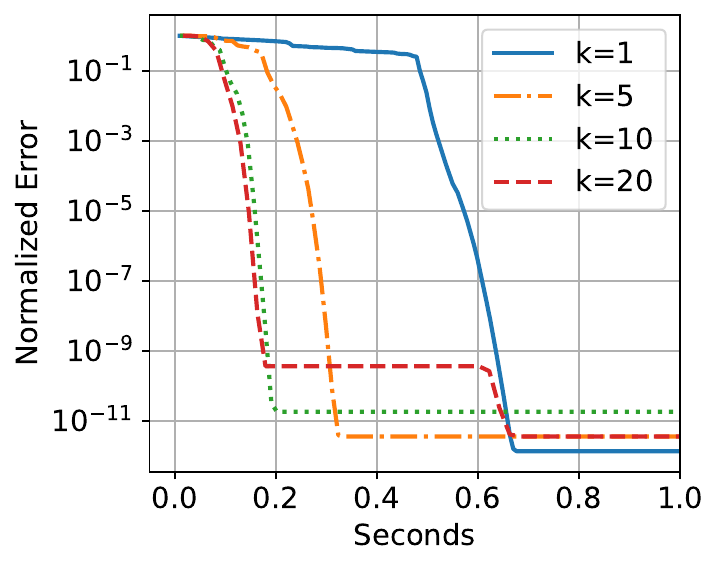}
     & \includegraphics[scale=0.45]{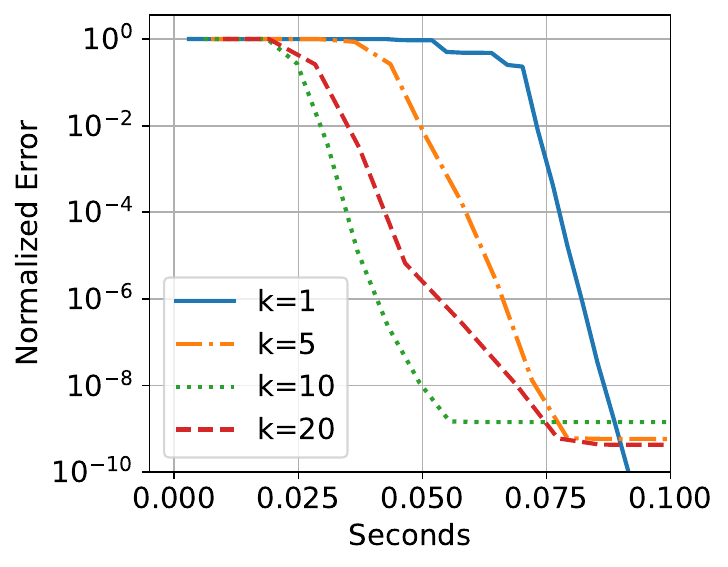} \\   
     (e) mushrooms & (f) phishing &  (g) svmguide3 \\
     \includegraphics[scale=0.45]{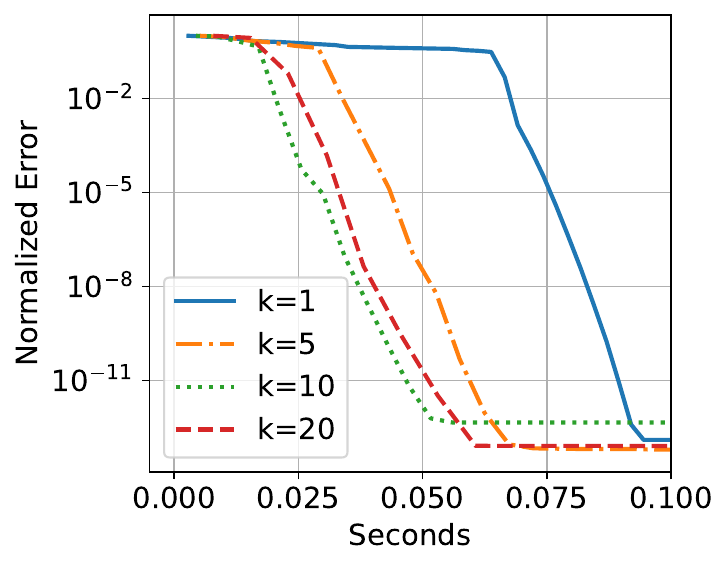}
     & \includegraphics[scale=0.45]{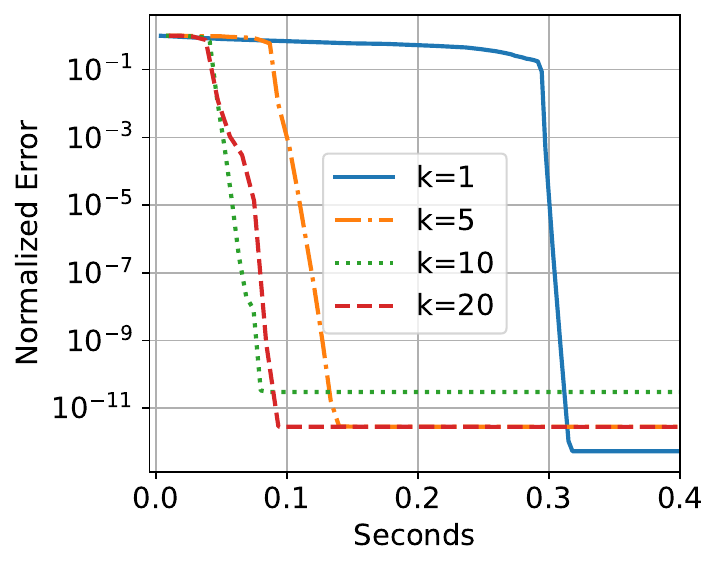}
     & \includegraphics[scale=0.45]{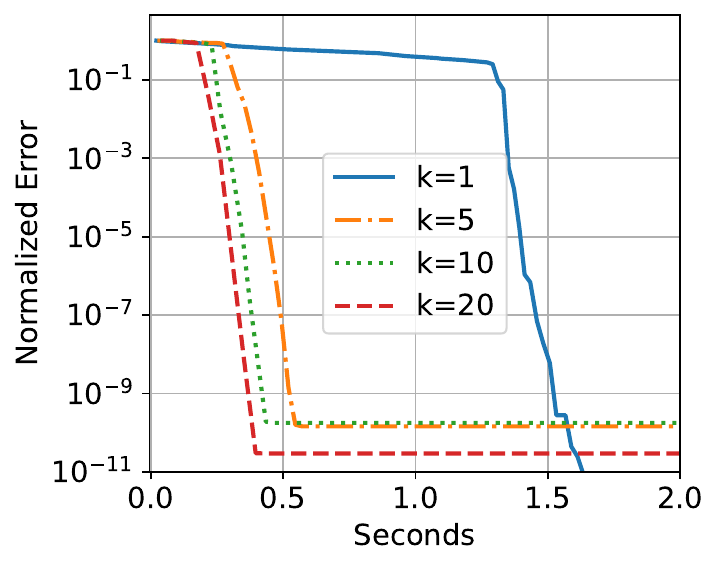}    \\
     (h) german.numer & (i) splice &  (j) covtype \\
\end{tabular}
\caption{Comparison of the LISR-$k$ method with different choices of $k$ for the general function minimization. }
\label{fig:general_compare_time_res}
\end{figure*}
In this subsection, we present the comparison result of the LISR-$k$ method with different choices of $k$ on real-world datasets shown in Table \ref{tab:dataset}.
The comparison result of normalized loss against number of the effective passes can be viewed in Figure \ref{fig:general_compare_res}.
It is not surprising to see that the LISR-$k$ generally converges at a faster rate if $k$ is set larger.
 
Additionally, we plot the comparison result of the normalized loss against seconds in Figure \ref{fig:general_compare_time_res}.
It can be seen that the LISR-$k$ method converges fastest if $k$ is set to 10 or 20. 
Although the per-iteration cost of LISR-$k$ is larger, the overall time to converge is still shorter than LISR-1 thanks to the faster convergence rate.

\end{document}